\tikzset{cross/.style={cross out, draw=black, fill=none, minimum size=2*(#1-\pgflinewidth), inner sep=0pt, outer sep=0pt}, cross/.default={2pt}}
\DeclarePairedDelimiter{\ideal}{\langle}{\rangle}
\theoremstyle{theorem}
\newtheorem{theorem}{Theorem}[section]
\newtheorem{lemma}[theorem]{Lemma}
\newtheorem{proposition}[theorem]{Proposition}
\theoremstyle{definition}
\newtheorem{defn}[theorem]{Definition}
\newtheorem*{notn}{Notation}
\newdimen\arrowsize 
\newcommand{\ZZ}{\mathbb{Z}}
\newcommand{\RR}{\mathbb{R}}
\newcommand{\CC}{\mathbb{C}}
\newcommand{\NN}{\mathbb{N}}
\newcommand{\supp}{\text{supp}}
\newcommand{\Per}{\text{Per}}
\newcommand{\AbsPer}{\operatorname{AbsPer}}
\newcommand{\Id}{\text{Id}}
\newcommand{\calH}{\mathcal{H}}
\newcommand{\modd}{\text{ mod }}
\newcommand{\TT}{\mathbb{T}}
\newcommand{\calC}{\mathcal{C}}
\newcommand{\mon}{\text{Mon}}
\newcommand{\Mon}{\text{Mon}}
\newlength\shlength
\newcommand\xshlongvec[2][0]{\ThisStyle{\setlength\shlength{#1\LMpt}%
  \stackengine{-5.6\LMpt}{$\SavedStyle#2$}{\smash{$\kern\shlength%
    \stackengine{\dimexpr 1.3pt+6.25\LMpt}{$\SavedStyle\mathchar"017E$}%
      {\rule{\widthof{$\SavedStyle#2$}}{\dimexpr.1pt+.5\LMpt}\kern.4\LMpt}{O}{r}{F}{F}{L}\kern-\shlength$}}%
      {O}{c}{F}{T}{S}}}
\title{Counting formulae for square-tiled surfaces in genus two}
\author{Sunrose Shrestha}
\begin{document}
\maketitle

\begin{abstract}
Square-tiled surfaces can be classified by their number of squares and their cylinder diagrams (also called realizable separatrix diagrams).  For the case of $n$ squares and two cone points with angle $4 \pi$ each, we set up and parametrize the classification into four diagrams.  Our main result is to provide formulae for enumeration of square-tiled surfaces in these four diagrams, completing the detailed count for genus two.   The formulae are in terms of various well-studied arithmetic functions, enabling us to give asymptotics for each diagram using a new calculation for additive convolutions of divisor functions that was recently derived by the author and collaborators. Interestingly, two of the four cylinder diagrams occur with asymptotic density 1/4, but the other diagrams occur with different (and irrational) densities.
\end{abstract}

\section{Introduction}

The main result of this paper is enumeration of the number of primitive (connected) square-tiled translation surfaces in the stratum $\calH(1,1)$ by their cylinder diagrams. 

Recall that a square-tiled translation surface is a closed orientable surface built out of unit-area axis-parallel Euclidean squares glued along edges via translations. Square-tiled surfaces of genus $>1$ are ramified covers (with branching over exactly one point) of the standard square torus.  The principal stratum $\calH(1,1)$ contains genus two translation surfaces with two simple cone points. 
(The only other possibility is a single cone point with more angle excess.)
A \emph{primitive} square-tiled surface is one that covers the standard  torus with no other square-tiled surface as an intermediate cover.

Every square-tiled  surface is  built out of horizontal square-tiled cylinders. 
We can define {\em cylinder diagrams} 
 (ribbon graphs with a pairing on the boundary components) which keep track of the number of cylinders and the gluing patterns along their sides.
 In Section~\ref{sec:cyldiag} we identify the four cylinder diagrams for $\calH(1,1)$. A square-tiled surface with $n$ squares will be referred to as an \emph{$n$-square surface.}
 
We now state our main result.
\begin{theorem}\label{thm:maincounts} Let $A(n), B(n), C(n)$ and  $D(n)$ count the number of primitive $n$-square surfaces in $\calH(1,1)$ with cylinder diagram $A$, $B$, $C$ and $D$, respectively. Let $E(n)$ be the total number of primitive $n$-square surfaces in $\calH(1,1)$. Then 
$$E(n) = \frac{1}{6}(n-2)(n-3)J_2(n)$$ and this breaks down as 
$$\begin{array}{llc}
{\rm Diagram} & {\rm Formula} & {\rm Density}\\
\hline\\
\tikz{\draw [fill=gray!50] (0,0) rectangle (1,1/3);}
&A(n) =\frac{1}{24}n(n-6)J_2(n) + \frac{1}{2}nJ_1(n)& \frac{1}{4}E(n)\\

\tikz{\draw [fill=gray!50]  (0,0)--(1,0)--(1,.2)--(.3,.2)--(.3,.5)--(0,.5)--cycle;   }
&
             B(n)= \bigl((\mu \cdot \sigma_2) * (\sigma_1 \Delta \sigma_2)\bigr)(n) - \frac{1}{24}(2n+9)(n-2)J_2(n) - \frac{1}{2}nJ_1(n) &
			 \left(\frac{\zeta(2)\zeta(3)}{2\zeta(5)}-\frac{1}{2}\right)E(n) \\
\raisebox{-.2in}{\tikz{\draw [fill=gray!50]  (1/3,0)--(1,0)--(1,.5)--(.8,.5)--(.8,1)--(0,1)--(0,.5)--(1/3,.5)--cycle;   }}
&C(n) = \frac{1}{24}(n-2)(n-3)J_2(n) & \frac{1}{4} E(n)\\[15pt]
\raisebox{-.2in}{\tikz{\draw [fill=gray!50]  (1/3,0)--(1,0)--(1,2/3)--(1/3,2/3)--(1/3,1)--(0,1)--(0,1/6)--(1/3,1/6)--cycle;   }}
&D(n) = \frac{1}{6}n(n-1)J_2(n) - \bigl((\mu \cdot \sigma_2) * (\sigma_1 \Delta \sigma_2)\bigr)(n)
& \left(1- \frac{\zeta(2)\zeta(3)}{2\zeta(5)}\right)E(n)
\end{array}$$
Furthermore, while surfaces with diagram A  and diagram C are each asymptotic to one-quarter of the total, the surfaces with other diagrams have unequal (and irrational) densitites.
$$A(n)/E(n) \to  0.25 \ ; \qquad B(n)/E(n) \to 0.453... \  ; \qquad C(n)/E(n) = 0.25 \  ; \qquad D(n)/E(n) \to 0.047... $$
\end{theorem}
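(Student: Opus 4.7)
The plan is to parametrize each of the four cylinder diagrams $A,B,C,D$ by explicit combinatorial data (widths $w_i$, heights $h_i$, twist parameters $t_i$, and any edge-identification integers forced by the ribbon graph), count the resulting tuples subject to the area constraint $\sum w_ih_i = n$, and then cut down to primitive surfaces. Because every square-tiled surface in $\mathcal{H}(1,1)$ decomposes into horizontal cylinders whose gluing along saddle connections is recorded by one of the four diagrams identified in Section~\ref{sec:cyldiag}, summing the four counts must recover the known total $E(n)=\frac{1}{6}(n-2)(n-3)J_2(n)$, and I will use this identity as a consistency check throughout.

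For diagrams with a simple parametrization I would proceed directly. Diagram $C$ looks like the cleanest: its two-cylinder structure (with the saddle connections forcing balanced width inputs) should produce a count of the form $\frac{1}{24}(n-2)(n-3)J_2(n)$ once one (i) fixes widths $w_1,w_2$ and heights $h_1,h_2$ with $w_1h_1+w_2h_2=n$, (ii) counts the $J_2$-many admissible twist pairs, and (iii) divides by the diagram's automorphism order. Diagram $A$, the one-cylinder case, admits a similar direct parametrization but includes an exceptional family (when the horizontal saddle connections coincide in a degenerate way) that contributes the extra $\frac{1}{2}nJ_1(n)$ correction. In both cases, primitivity is handled by a Möbius inversion in $n$: count all surfaces with the given diagram (primitive or not) and subtract the non-primitive contributions obtained by pulling back from proper divisors.

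The most delicate step will be diagrams $B$ and $D$. Here the ribbon graph imposes a nontrivial constraint tying together the two cylinder widths to a common divisor structure, which I expect to manifest as a divisor sum of the shape
\[
\sum_{d\mid n} f(d)\, g(n/d),
\]
i.e.\ a Dirichlet convolution. The particular combination $(\mu\cdot\sigma_2)*(\sigma_1\Delta\sigma_2)$ appearing in $B(n)$ suggests that the primitivity condition factors the count as \emph{(Möbius-weighted $\sigma_2$-term coming from one cylinder)} convolved with a \emph{mixed divisor contribution from the other cylinder}; I would derive this by writing the total number of $B$-surfaces as a triple sum over $(w_1,w_2,d)$ with $d=\gcd$ controlling the gluing and then rearranging into convolution form, with the $\mu$-factor enforcing primitivity. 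The formula for $D(n)$ would then follow either by the same kind of parametrization or, more economically, from the sum constraint $A+B+C+D=E$.

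The main obstacle I anticipate is obtaining the convolution $(\mu\cdot\sigma_2)*(\sigma_1\Delta\sigma_2)$ in closed form from the raw enumeration. This requires identifying precisely which divisor of $n$ plays the role of the convolution variable, which in turn requires a careful analysis of when two candidate parametrizations in diagram $B$ yield isomorphic square-tiled surfaces (i.e.\ computing the automorphism group of the diagram and quotienting correctly). Once the four counting formulae are established, the asymptotic densities follow routinely: for the polynomial-in-$n$ times $J_2(n)$ terms one uses $\sum_{n\le N}J_2(n)\sim N^3/(3\zeta(3))$ and $\sum_{n\le N}J_1(n)\sim N^2/(2\zeta(2))$, while the $B$ and $D$ asymptotics reduce to the average order of $(\mu\cdot\sigma_2)*(\sigma_1\Delta\sigma_2)$, which by the cited convolution identity for divisor functions evaluates to $\frac{\zeta(2)\zeta(3)}{2\zeta(5)}$ times the leading term of $E(n)$, producing the irrational densities $0.453\ldots$ and $0.047\ldots$ as claimed.
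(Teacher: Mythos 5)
Your outline reproduces the paper's high-level architecture (parametrize each diagram by widths, heights and twists subject to the area constraint, impose primitivity arithmetically, then extract densities), but as written it has concrete gaps at exactly the points where the work lies. First, your treatment of primitivity by ``Möbius inversion in $n$'' is not correct as stated: an imprimitive $n$-square surface in $\calH(1,1)$ corresponds to a primitive $(n/d)$-square surface together with a choice of index-$d$ sublattice of $\ZZ^2$, so the all-versus-primitive relation is a Dirichlet convolution with weight $\sigma_1(d)$ (the number of such sublattices), not a plain $\mu$-inversion; moreover, to run this diagram-by-diagram you must also prove that the correspondence preserves the horizontal cylinder diagram (this holds because every finite-index sublattice contains horizontal vectors, so the affine comparison map is upper triangular, but it needs an argument). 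The paper avoids this route entirely: it characterizes primitivity intrinsically via the monodromy group and the lattice of absolute periods (Proposition \ref{prop:primsurfprimgroup}, Lemmas \ref{lem:lattice1}--\ref{lem:allprimitivity}), yielding explicit gcd conditions such as $p\wedge q=1$ and $(k+l)\wedge(l+m)\wedge(p\beta-q\alpha)=1$ on the parameters, and only then uses Möbius manipulations on the resulting sums. Second, the heart of the theorem --- producing the closed forms for $B(n)$ and $D(n)$, in particular the term $(\mu\cdot\sigma_2)*(\sigma_1\Delta\sigma_2)$ --- is only gestured at (``rearranging into convolution form''); in the paper this requires the unique parametrizations modulo cut-and-paste (Propositions \ref{prop:typeIparam}--\ref{prop:typeIVparam}), the twist-counting lemma giving the factor $kl\,\phi(k\wedge l)/(k\wedge l)$ (Lemma \ref{zmiakoulem}), Ramanujan's identity for $\sigma_1\Delta\sigma_1$, and the Dirichlet-series evaluations of Lemmas \ref{lem:X}--\ref{lem:W}. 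Your diagram-A sketch also mislocates the $\tfrac12 nJ_1(n)$ term: it is not an ``exceptional family'' but falls out of an inclusion--exclusion over quadruples $(x,y,z,t)$ with $(z-x)\wedge(t-y)\wedge n=1$ (Lemmas \ref{prop:typeIreparam} and \ref{dquadruple}).

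Finally, the asymptotics as you propose them do not prove the statement. The theorem asserts pointwise limits $A(n)/E(n)\to\tfrac14$, $B(n)/E(n)\to\tfrac{\zeta(2)\zeta(3)}{2\zeta(5)}-\tfrac12$, etc., whereas average-order estimates such as $\sum_{n\le N}J_2(n)\sim N^3/(3\zeta(3))$ only give densities in a Cesàro sense. What is needed is the pointwise Ingham-type asymptotic $(\sigma_1\Delta\sigma_2)(n)=\tfrac{\zeta(2)\zeta(3)}{12\zeta(5)}\sigma_4(n)+O(n^{3+\epsilon})$ (Theorem \ref{thm:asymconv}), together with the check that convolving the error term with $\mu\cdot\sigma_2$ stays $O(n^{3+\epsilon})$ and that $(\mu\cdot\sigma_2)*\sigma_4\equiv \Id_2\cdot J_2$, so that the oscillating factor $J_2(n)$ cancels against $E(n)$; without this the limits for $B$ and $D$ do not follow. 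In short, the proposal is a plausible roadmap consistent with the paper's strategy, but the primitivity step is wrong as formulated, the central convolution identities are assumed rather than derived, and the density argument uses the wrong notion of asymptotics.
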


The statement of our main theorem uses standard notation for  arithmetic functions:  $J_1$ and $J_2$ are Jordan totient functions; $\mu$ is the M\"obius function;
$\sigma_1$ and $\sigma_2$ are divisor functions; and $\zeta$ is the Riemann zeta function.  The symbol $*$ denotes the Dirichlet convolution, and $\Delta$ is 
additive convolution.  
For more detailed definitions, see Appendix \ref{sec:arithmetic}. 

Since $\frac{6}{\pi^2}n^2 < J_2(n) \leq n^2$, the number of primitive $n$-square surfaces in $\calH(1,1)$ grows at least as fast as $\frac{1}{\pi^2} n^4$. 
The total count $E(n)$ was already known, and can be found in the work of Bloch--Okounkov \cite{blochokounkov} and Dijkgraaf \cite{dijkgraaf}. 
The novelty in our result is that we get a more detailed count, by cylinder diagrams, that allows us to obtain the individual asymptotic densities as well.

Figure~\ref{fig:examplevalues} shows the share of $E(n)$ by cylinder diagram for $4\le n\le 101$.  
We see  erratic but steady convergence in the direction of the asymptotics from Theorem \ref{thm:maincounts}.

\begin{figure}[h!!]
\includegraphics[scale=0.75]{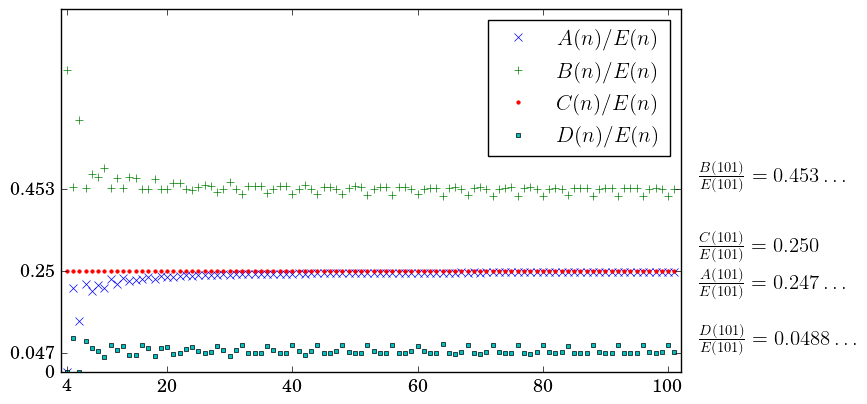}
\caption{Scatter plot for $\frac{A(n)}{E(n)}$,  $\frac{B(n)}{E(n)}$, $\frac{C(n)}{E(n)}$, $\frac{D(n)}{E(n)}$ for $4 \leq n \leq 101$}
\label{fig:examplevalues}
\end{figure}

The enumeration by cylinder diagram of primitive $n$-square surfaces in $\calH(2)$ was done in unpublished work of  Zmiaikou \cite{zmprob}. 
Let $H(n) = \frac{3}{8}(n-2)J_2(n)$ be the total number of primitive square-tiled surfaces in $\calH(2)$.
There are two cylinder diagrams for surfaces in $\calH(2)$, which 
we can denote by $F$ and $G$, and the number of primitive $n$-square surfaces with each diagram is then given as follows.
\begin{gather*}
F(n)= \frac{1}{6}n J_2(n) - \frac{1}{2}nJ_1(n) \sim \frac{4}{9}H(n);\\
G(n)= \frac{5}{24}n J_2(n) + \frac{1}{2}nJ_1(n) - \frac{3}{4}J_2(n) \sim \frac{5}{9}H(n).
\end{gather*}
The main theorem of this paper complements Zmiaikou's work, completing the enumeration of primitive square-tiled surfaces of genus 2 by cylinder diagram.

\subsection*{Relationship to other results}

This work fits into a significant body of literature on enumeration of square-tiled surfaces.  
Several papers focus on the classification by orbits in the $SL_2(\ZZ)$ action on square-tiled surfaces.
Combined work of Hubert--Leli\`evre \cite{hubertlelievre} and McMullen \cite{mcmullen} shows that for $n\ge 3$, there are either one or two $SL_2(\ZZ)$ orbits
for primitive $n$-square surfaces in $\calH(2)$.
Subsequently, Leli\`evre--Royer \cite{lelievreroyer} obtained formulae for enumerating these orbit-wise. 
They also prove that the generating functions for these countings are quasimodular forms. Dijkgraaf \cite{dijkgraaf} gave generating functions for the number of $n$-sheeted covers of genus $g$ of the square torus with simple ramification over distinct points, and Bloch--Okounkov \cite{blochokounkov} studied this problem for arbitrary ramification. 

Square-tiled surfaces (not necessarily primitive) were also counted by their cylinder diagrams by Zorich \cite{zorichsqtiled} who applied the counts to compute the Masur-Veech volume of certain small genus strata. Eskin--Okounkov \cite{eskinokounkov} generalized Zorich's work and used such counts to obtain formulae for the Masur-Veech volume of all strata. More recently Delecroix--Goujard--Zograf--Zorich \cite{delgouzogzor} refined this result, and studied the absolute and relative contributions of square-tiled surfaces with fixed number of cylinders in their cylinder diagrams to the Masur-Veech volume of the ambient strata. They got a general formulae (albeit, not closed) for any number of cylinders and any strata. Moreover, they obtained closed sharp upper and lower bounds of the absolute contributions of 1-cylinder surfaces to the volume of any stratum and in the cases where the stratum is either $\calH(2g-2)$ or $\calH(1, 1, \dots, 1)$ they obtained closed exact formulae. In the same paper, they also proved that square-tiled surfaces with a fixed cylinder diagram equidistribute in the ambient stratum.

Eskin--Masur--Schmoll \cite{eskinmasurschmoll} on the other hand used counts of primitive square-tiled surfaces in $\calH(2)$ and $\calH(1,1)$ to obtain the asymptotics for the number of closed orbits for billiards in a square table with a barrier.
\subsection*{Proof strategy and structure of paper}

The paper is organized as follows. The background in Section \ref{sec:background} covers generalities on square-tiled surfaces and introduces the {\em monodromy group}
$\Mon(S)\le S_n$ as a key tool that records the structure of gluings of the sides of $S$ in terms of permutations of squares.  
For the remaining sections, we have streamlined the presentation by creating appendices with detailed but routine calculations.
In Section \ref{sec:cyldiag} we give statements of how the four cylinder diagrams in $\calH(1,1)$ will be parametrized.  
(Full proofs that support these parametrizations are found in the Appendix~\ref{sec:cyldiagclass}.)
In Section \ref{sec:primitivity} we state number theoretic criteria on the parameters obtained in Section \ref{sec:cyldiag} which characterize primitivity. 
(Full proofs appear in Appendix~\ref{sec:primcriterion}.)
  In Section \ref{sec:enumeration} we manipulate the sums that appear from the number theoretic criteria to deduce the counting formulae in the main theorem.
 (Full work showing simplifications of intermediate sums appears in Appendix~\ref{sec:intermediatesums}.)
Finally, in Section \ref{sec:proportions} we complete the proof of the main theorem by computing asymptotic densities for each cylinder diagram. (Appendix \ref{sec:arithmetic} contains background on some arithmetic functions, and number theoretic identities used during the enumerations.) 

The main idea in the proof is to take advantage of the key fact that 
a connected $n$-square surface is primitive if and only if the associated monodromy group, $\Mon(S)$, satisfies an algebraic condition also called primitivity, which is described 
in terms of the orbits of its action on the $n$ squares.  (See Section~\ref{sec:background}.)
The number theoretic conditions result directly from this algebraic characterization, and much of the rest of the work is in cleverly handling the sums involving arithmetic functions.

Our methods can be extended directly to enumeration of primitive square-tiled surfaces in certain strata of higher genus, with asymptotic proportions by cylinder diagrams. 
However, the complexity becomes forbidding.  Even $\calH(4)$, which is the smallest stratum in genus 3, already has 22 different cylinder diagrams
(as shown by S. Leli\`evre in \cite{lelievre} as an Appendix to \cite{matheusmolleryoccoz}.)
A more tractable starting point might be to consider the hyperelliptic component of $\calH(4)$, which has just 5 cylinder diagrams. 
On the other hand, there are clear limitations on the generality of this counting method:  it is known that in $\calH(1,1,1,1)$, the principal stratum of genus 3, 
the lattice of absolute periods does not pick up primitivity, so new ideas would be needed.

\subsection*{Acknowledgements} We are very grateful for Moon Duchin for suggesting this problem and advising us throughout the project. We are also very grateful to Samuel Leli\`evre for many insightful conversations, comments, and suggestions. We also acknowledge David Zmiaikou, Robert Lemke-Oliver and Frank Thorne for helpful conversations.


\section{Background}\label{sec:background}

\subsection{Square-tiled Surfaces}

We begin this section by defining square-tiled surfaces more rigorously.
\begin{defn}[Square-tiled surface] \label{defn:sqtiled} A \emph{square-tiled surface} is a closed orientable surface obtained from the union of finitely many Euclidean axis parallel unit area squares $\{\Delta_1, \dots, \Delta_n\}$ such that
\begin{itemize}
\item the embedding of the squares in $\RR^2$ is fixed only up to translation,

\item after orienting the boundary of every square counterclockwise, for every $1 \leq j \leq n$, for every oriented side $s_j$ of $\Delta_j$, there exists a $1 \leq k \leq n$, and an oriented side $s_k$ of $\Delta_k$ so that $s_j$ and $s_k$ are parallel, and of opposite orientation. The sides $s_j$ and $s_k$ are glued together by a parallel translation. 
\end{itemize}
\end{defn}

A few key things that follow from the definition:
\begin{itemize}
\item The orientations of the glued edges $s_j$ and $s_k$ are opposite so that as one moves along the glued side, $\Delta_j$ appears to the left and $\Delta_k$ appears to the right (or vice versa). 
\item The total angle around a vertex is $2 \pi c$ for some positive integer $c$. When $c > 1$, we call the point a cone point. \item Since the squares are embedded in $\RR^2$ up to translation, we distinguish between two squares if one is obtained from the other by a nontrivial rotation. However, two squares are ``cut, parallel transport, and paste" equivalent. Hence, square-tiled surfaces come with a well defined vertical direction.
\end{itemize}

Considering the squares as embedded in $\CC$, one can give a square-tiled surface a complex structure as well. Moreover, since the gluing of the sides is by translation, the transition functions are translations:
$$ z \rightarrow z' + C$$

Hence, the 1-form $dz$ on $\CC$ gives rise to a holomorphic 1-form $\omega$ on $S$ so that locally, $\omega = dz$. This is well defined since at a chart where the coordinate function is $z'$, $\omega$ takes the form $\omega = dz'$. Around the cone points, up to a change of coordinates, the coordinate functions are $z^{k+1}$ so that $\omega$ takes the form $\omega = z^{k} dz$. Hence, the cone points are zeros of $\omega$, and if the angle around the cone point is $2\pi (k+1)$, then the degree of the zero is $k$.

Given the angles around each cone point, and the number of cone points, one can also recover the genus of the square-tiled surface. Note that since the squares are Euclidean, square-tiled surfaces are flat everywhere except at the cone points. Hence, the classical Gauss-Bonnet theorem takes a relatively simple form to give: 
$$\int \kappa dA = 2 \pi \chi(S) \implies - 2\pi \sum_{i=1}^m k_i = 2 \pi (2-2g) \implies \sum_{i=1}^m k_i = 2g - 2$$
where $\kappa$ is Gaussian curvature, $\chi(S)$ is the Euler characteristic of $S$, and the sum is over the cone points with angles $2\pi(k_i+1)$. 

More generally, if we allow arbitrary Euclidean polygons in definition \ref{defn:sqtiled}, we get \emph{translation surfaces} which are a class of surfaces of which square-tiled surfaces are a particular subset. 

For translation surfaces, let $\alpha = (k_1, \dots, k_m)$ be the integer vector that records the angle data of the cone points so that there are $m$ cone points and each cone point has angle $2\pi(k_i+1)$. Since the genus of the surface is recovered using this data, the space of genus $g$ translation surfaces is stratified with surfaces sharing the same angle data $\alpha =(k_1, \dots, k_m)$ for the various integer partitions of $2g-2$. These are called \emph{strata} and are denoted $\calH(\alpha)$.

 Let $\TT = \CC/(\ZZ +i\ZZ)$ be the standard torus. Given a square-tiled surface $S$ (with a holomorphic 1-form $\omega$), we know that the cone points are in the integer lattice, and hence we get a map,
$$ \pi :S \rightarrow \TT$$ by 
$$ p \rightarrow \int_{P_1}^p \omega \mod \ZZ+i\ZZ$$
where $\{P_1, \dots, P_m\}$ is the set of cone points of $S$. $\pi$ is holomorphic and onto, and hence it is a ramified covering where the ramification points are exactly the zeros of $\omega$ (or the cone points) which project to $0 \in \TT$.

Given an element $\rho$ in the relative homology group $H_1(S, \{P_1, \dots, P_m\};\ZZ)$ of a square-tiled surface $S$, we call $\int_\rho \omega$ a \emph{period}. Since the cone points are in the integer lattice, all periods are in $\ZZ+i\ZZ \simeq \ZZ^2$.

Since square-tiled surfaces are cut, parallel transport and paste equivalent, their representation in the plane is not unique. In particular, square-tiled surfaces can be represented by parallelograms as well. We call such representations \emph{unfolded representations}. See Figure \ref{fig:unfoldedrepresentation} for a basic example. 

\begin{figure}[h!]
\includegraphics[scale=0.5]{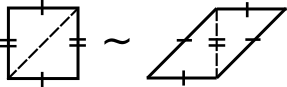}
\caption{An unfolded representation for the standard torus}
\label{fig:unfoldedrepresentation}
\end{figure}
Next we define some geometrical objects of interest on these surfaces.
\begin{defn}[Saddle connection] A \emph{saddle connection} in a translation surface $S$ is a curve $\gamma: [0,1] \rightarrow S$ such that $\gamma(0)$ and $\gamma(1)$ are cone points, but $\gamma(s)$ is not a cone point for any $0 < s < 1$.
\end{defn}

\begin{defn}[Holonomy vector] The \emph{holonomy vector} associated to a saddle connection $\gamma$ in a translation surface is the relative period $v = \int_\gamma \omega$ viewed as a vector in  $\RR^2$.
\end{defn}

Geometrically, a holonomy vector of a saddle connection $\gamma$ records the Euclidean horizontal and vertical displacement of a geodesic representative of $\gamma$. If $(v_1 ,v_2)$ is a holonomy vector of a saddle connection $\gamma$, then $v_1$ will be called the \emph{horizontal holonomy} of $\gamma$ and $v_2$ will be called the \emph{vertical holonomy} of $\gamma$.

\begin{defn}[Lattice of periods] The \emph{lattice of periods} of a square-tiled surface $S$, denoted as $\Per(S)$, is the rank 2 sublattice of $\ZZ^2$ holonomy vectors of $S$.
\end{defn}

\begin{defn}[Lattice of absolute periods] The \emph{lattice of absolute periods} of a square-tiled surface $S$, denoted as $\AbsPer(S)$, is the rank 2 sublattice of $\ZZ^2$ generated by the holonomy vectors of closed saddle connections of $S$.
\end{defn}

Note that $\AbsPer(S) \subset \Per(S)$.

We say that a square-tiled surface $S$ covers $S'$ if the following diagram commutes for a ramified covering $\pi$. $S$ will be called a \emph{proper ramified covering} of $S'$ if $\pi$ and $h'$ have covering degree $> 1$. 

$$\begin{tikzcd}[column sep=small] 
S  \arrow{dr}[swap]{h} \arrow{rr}{\pi} 
&& S' \arrow{dl}{h'} \\
&
\TT
\end{tikzcd}
$$

This motivates the following definition:

\begin{defn}[Primitive square-tiled surface] We call a square-tiled surface \emph{primitive} it is not a proper ramified covering of any other square-tiled surface.
\end{defn}

\subsection{The Monodromy Group}
Given an $n$-square surface $S$, first fix a labelling of the squares by $\{1, \dots, n \}$. Square-tiled surfaces, come with a well defined vertical direction, and the squares used to make the surface are axis parallel. Hence, for any square, there is a well defined notion of top, right, bottom and left neighboring squares. We associate two permutations, $\sigma$ and $\tau$, to $S$ defined by 
$$ \sigma(i) = j \iff \text{ right side of square }i\text{ is glued to the left side of square }j$$
$$ \tau(i) = j \iff \text{ top side of square }i\text{ is glued to the bottom side of square }j$$

The permutations $\sigma$ and $\tau$ describe completely how to glue the squares to form $S$. $\sigma$ is referred to as the \emph{right permutation} associated to $S$ and $\tau$ is referred to as the \emph{top permutation} associated to $S$. 

If the labelling on $S$ is changed by a permutation $\gamma \in S_n$ (the symmetric group on $n$ objects) so that square $i$ is now labelled $\gamma(i)$, then one checks that the associated right and top permutations we get for the newly labelled $S$, are $\gamma \sigma \gamma^{-1}$ and $\gamma \tau \gamma^{-1}$.

Hence, given an unlabelled square-tiled surface $S$ we can obtain a simultaneous conjugacy class of a pair of permutations in $S_n$ as described above. One checks that the converse is true: given a simultaneous conjugacy class in $S_n \times S_n$, one can associate uniquely, a (possibly disconnected) square-tiled surface. 

Notationally, given a pair of permutations $(\sigma, \tau)$ we will denote $S(\sigma, \tau)$ as the square-tiled surface that has $\sigma$ and $\tau$ as its right and top permutations. Note that fixing the conjugacy class representative $\sigma$, fixes a labelling of $S$, so that $S(\sigma, \tau)$ comes with a labelling. 

The commutator $[\sigma,\tau]$ is also of interest, since its cycle type defines the topological type of $S$. We will state the following known propositions, to this effect. 

\begin{proposition}\label{prop:lowerleft}
Let $\sigma, \tau \in S_n$ such that $[\sigma, \tau]$ is a product of disjoint nontrivial cycles $u_1, \dots, u_m$ of lengths $(k_i+1), \dots, (k_m+1)$. Consider $S(\sigma, \tau)$ endowed with a labelling prescribed by $\sigma$. Then, for labelled squares $x \neq y$, their lower left corners are identified if and only if and each $x,y \in \supp(u_j)$ for some $j$.
\end{proposition}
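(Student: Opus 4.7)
The plan is to verify the claim geometrically by tracking which squares meet at the lower-left corner of a given square $x$ and identifying the permutation whose action corresponds to one full counterclockwise turn around that corner.

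First, using only the defining relations of $\sigma$ and $\tau$, I would read off the four square labels at a generic corner adjacent to the lower-left corner of $x$: the square $x$ itself sits in the upper-right quadrant, $\sigma^{-1}(x)$ in the upper-left, $\tau^{-1}(x)$ in the lower-right, and the square in the lower-left quadrant is reached either as $\tau^{-1}\sigma^{-1}(x)$ (going down then left) or as $\sigma^{-1}\tau^{-1}(x)$ (going left then down). If the corner is a non-cone regular vertex these two expressions agree and a fourth quarter-turn returns to $x$; in general, going around once will not close up.

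Next, I would walk counterclockwise around the corner, quadrant by quadrant, applying $\sigma^{-1}$ to move to a left neighbor, $\tau^{-1}$ to move to a lower neighbor, $\sigma$ to move to a right neighbor, and $\tau$ to move to an upper neighbor. A short composition shows that after one full $2\pi$ loop, the square occupying the upper-right quadrant is $\tau\sigma\tau^{-1}\sigma^{-1}(x) = [\tau,\sigma](x)$. Iterating, after $k$ loops the square in the upper-right position is $[\tau,\sigma]^{k}(x)$, so the set of squares whose lower-left corners coincide with that of $x$ is exactly the orbit of $x$ under $[\tau,\sigma]$, which equals the orbit under the inverse permutation $[\sigma,\tau]$.

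Hence for $x \neq y$, the lower-left corners of $x$ and $y$ are identified if and only if $x$ and $y$ lie in a common cycle of $[\sigma,\tau]$; since $x \neq y$ forces this cycle to have length at least two, this is equivalent to $x,y \in \supp(u_j)$ for some $j$, which is the claim. The one delicate point will be orientation bookkeeping, namely matching ``counterclockwise traversal in the plane'' with the correct commutator $[\tau,\sigma]$ versus $[\sigma,\tau]$; but once the four-quadrant picture is set up the computation is mechanical and the result follows.
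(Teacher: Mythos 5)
The paper itself gives no argument for Proposition \ref{prop:lowerleft}: it is stated as a known fact (it goes back to the standard square-tiled surface literature, e.g.\ Zmiaikou's thesis), so there is no in-paper proof to compare against. Your argument is the standard one and it is correct. Walking counterclockwise through the four quadrants at the lower-left corner of $x$ gives, after one full turn of $2\pi$, the square $\tau\sigma\tau^{-1}\sigma^{-1}(x)$ in the upper-right position; since the cone angle at any vertex is a finite multiple of $2\pi$, the walk closes up and visits every quadrant at that vertex, so the squares whose lower-left corner sits at this vertex are exactly the orbit of $x$ under $\tau\sigma\tau^{-1}\sigma^{-1}$, which (with the usual convention $[\sigma,\tau]=\sigma\tau\sigma^{-1}\tau^{-1}$, so that this rotation is $[\sigma,\tau]^{-1}$) has the same orbits as $[\sigma,\tau]$; distinct squares then share a lower-left corner precisely when they lie in a common nontrivial cycle $u_j$, i.e.\ $x,y\in\supp(u_j)$. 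Two small points to tidy in the write-up: your parenthetical labels in the preliminary quadrant description are swapped --- going down then left from $x$ yields $\sigma^{-1}\tau^{-1}(x)$, while going left then down yields $\tau^{-1}\sigma^{-1}(x)$ --- which is harmless since only the full-loop computation is used; and for the reverse inclusion you should state explicitly that every square whose lower-left corner lies at the vertex occupies one of the upper-right quadrants met by the walk, hence equals $(\tau\sigma\tau^{-1}\sigma^{-1})^{k}(x)$ for some $k$, so nothing outside the orbit is identified with $x$ at that corner.
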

Recall that the support of a cycle $u = (a_1\dots a_j)$, denoted $\supp(u)$, is the set $\{a_1, \dots, a_j\}$ of elements that are nontrivially moved by $u$. 
Knowing Proposition \ref{prop:lowerleft}, it follows that the cycle type of $[\sigma, \tau]$ determines the stratum:
\begin{proposition} $S(\sigma, \tau) \in \calH(k_1, \dots, k_m)$
\end{proposition}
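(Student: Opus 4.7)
The plan is to recognize the stratum directly from the local geometry at the vertices of $S(\sigma,\tau)$. By Proposition~\ref{prop:lowerleft}, the partition of the squares $\{1,\dots,n\}$ into equivalence classes under ``having the same lower-left corner'' is exactly the orbit partition of $[\sigma,\tau]$. To upgrade this to a statement about cone angles, I would first note that every vertex of $S(\sigma,\tau)$ is the lower-left corner of at least one square: around any vertex, the $\pi/2$-sectors meeting it alternate counterclockwise through the four corner types (lower-left, lower-right, upper-right, upper-left), so all four types must appear. Combined with Proposition~\ref{prop:lowerleft}, the vertices of $S(\sigma,\tau)$ are therefore in bijection with the cycles of $[\sigma,\tau]$ (including fixed points), and the vertex $v_j$ corresponding to a cycle of length $k_j+1$ is where the $k_j+1$ lower-left corners of the squares in $\supp(u_j)$ are glued.

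Next I would compute the cone angle at $v_j$ by tracing a single counterclockwise loop around it. Starting in the lower-left sector of some square $i\in\supp(u_j)$, the next $\pi/2$-sector is the lower-right corner of the square glued to the left edge of $i$, which by definition of $\sigma$ is $\sigma^{-1}(i)$. A second quarter turn lands at the upper-right corner of $\tau^{-1}\sigma^{-1}(i)$, a third quarter turn at the upper-left corner of $\sigma\tau^{-1}\sigma^{-1}(i)$, and completing the full $2\pi$ returns to a lower-left sector, now belonging to $\tau\sigma\tau^{-1}\sigma^{-1}(i)=[\tau,\sigma](i)=[\sigma,\tau]^{-1}(i)$. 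Hence each full revolution around $v_j$ permutes the square label by $[\sigma,\tau]^{-1}$, so the vertex closes up after precisely $k_j+1$ revolutions and has total cone angle $2\pi(k_j+1)$. Fixed points of $[\sigma,\tau]$ correspond to vertices of angle $2\pi$, i.e.\ regular points.

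Putting these together, the nontrivial cycles $u_1,\dots,u_m$ yield exactly the cone points of $S(\sigma,\tau)$, with angles $2\pi(k_1+1),\dots,2\pi(k_m+1)$, so $S(\sigma,\tau)\in\calH(k_1,\dots,k_m)$. The step that demands care is the angular bookkeeping in the tracing argument of the second paragraph: it is easy to mis-orient a quarter turn or swap the order of $\sigma$ and $\tau$, and thereby land on $[\sigma,\tau]$ versus $[\tau,\sigma]$. These are mutual inverses with the same cycle structure, so the eventual conclusion about cone angles is unaffected, but the tracing itself must be carried out carefully to justify the identification of one full revolution with a single application of a commutator.
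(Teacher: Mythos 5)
Your argument is correct and is essentially the route the paper intends: the paper states this proposition without proof, treating it as an immediate consequence of Proposition~\ref{prop:lowerleft}, and your write-up simply supplies the standard details (every vertex carries a lower-left sector, and one full $2\pi$ revolution advances the lower-left square label by one application of the commutator, up to the inverse/convention ambiguity you correctly note is harmless since $[\sigma,\tau]$ and $[\tau,\sigma]$ share cycle type). Your quarter-turn bookkeeping ($\sigma^{-1}$, then $\tau^{-1}$, then $\sigma$, then $\tau$) checks out, so each cycle of length $k_j+1$ yields a cone angle $2\pi(k_j+1)$, i.e.\ a zero of order $k_j$, as required.
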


The group $\ideal{\sigma, \tau}$ is also of interest, and has a name.

\begin{defn}[Monodromy Group] Let $S$ be a labelled square-tiled surface with $\sigma, \tau \in S_n$ as the top and right permutations. We define the \emph{monodromy group}, denoted $\Mon(S)$, as the subgroup of $S_n$ generated by $\sigma$ and $\tau$ (upto conjugacy). 
\end{defn}

Note that relabelling $S$ results in an isomorphic monodromy group. Note also that $S$ is connected if and only if $\Mon(S)$ acts transitively on the set $\{1, \dots, n\}$ with the natural permutation action. 

Next we define the notion of primitivity for a subgroup of $S_n$. 

\begin{defn}[Blocks] A non-empty subset $\Delta \in \{1, \dots, n\}$ is a \emph{block} for a subgroup $H \subset S_n$ if for all $h \in H$, either $h(\Delta) = \Delta$ or $h(\Delta) \cap \Delta = \emptyset$.  
\end{defn}
 
Note that singletons and the whole set $\{1, \dots n\}$ are always blocks. These will be called \emph{trivial blocks}.

\begin{defn}[Primitive subgroup] A subgroup $H \subset S_n$ is \emph{primitive} if $H$ has no nontrivial blocks.
\end{defn}

We then have the following proposition, that bridges the two notions of primitivity we have seen so far:

\begin{proposition}[\cite{zmthesis}]\label{prop:primsurfprimgroup} A connected $n$-square surface $S$ is primitive if and only if $\mon(S) \subset S_n$ is primitive.
\end{proposition}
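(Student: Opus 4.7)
The plan is to prove both directions by contrapositive, using the dictionary between connected $n$-square surfaces and simultaneous-conjugacy classes of transitive pairs $(\sigma,\tau)\in S_n\times S_n$ recalled in Section~\ref{sec:background}. Write $S=S(\sigma,\tau)$, so $\mon(S)=\langle\sigma,\tau\rangle\subset S_n$ acts transitively on $\{1,\ldots,n\}$.

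First suppose $\mon(S)$ is \emph{im}primitive, with a nontrivial block $\Delta$. By transitivity, the translates $\{\Delta_1,\ldots,\Delta_m\}$ of $\Delta$ under $\mon(S)$ partition $\{1,\ldots,n\}$ into blocks of common size $k=n/m$, with $1<k<n$. Since $\sigma$ and $\tau$ preserve this block system, they descend to permutations $\bar\sigma,\bar\tau\in S_m$. I would then set $S':=S(\bar\sigma,\bar\tau)$, an $m$-square surface, and define $\pi\colon S\to S'$ by mapping each unit square labelled $i\in\Delta_j$ isometrically (by a translation) onto the $j$-th unit square of $S'$. Since the neighbor data $(\sigma,\tau)$ on $S$ project by construction to $(\bar\sigma,\bar\tau)$ on $S'$, $\pi$ is a well-defined $k$-to-$1$ translation map, and it fits into a commutative diagram with the maps to $\TT$, making it a proper ramified covering. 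This exhibits $S$ as a proper ramified cover of $S'$, contradicting primitivity of $S$.

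Conversely, suppose $S$ is not primitive, so there is a proper ramified covering $\pi\colon S\to S'$ of square-tiled surfaces of degree $k>1$, with $S'$ having $m=n/k$ squares and fitting into the commutative diagram over $\TT$. Because $\pi$ commutes with the maps of $S,S'$ down to $\TT$ and each of those maps sends the interior of every unit square isometrically onto the standard open unit square of $\TT$, the restriction of $\pi$ to the interior of any unit square of $S$ is an isometry onto the interior of some unit square of $S'$, and the preimage of each open unit square of $S'$ is a disjoint union of exactly $k$ open unit squares of $S$. Letting $\Delta_j\subset\{1,\ldots,n\}$ be the labels of the $k$ squares of $S$ lying over the $j$-th square of $S'$ gives a partition of $\{1,\ldots,n\}$ into $m$ blocks of size $k$. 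If $\sigma',\tau'$ are the right and top permutations of $S'$, compatibility of $\pi$ with the neighbor structure forces $\sigma(\Delta_j)=\Delta_{\sigma'(j)}$ and $\tau(\Delta_j)=\Delta_{\tau'(j)}$, so each $\Delta_j$ is a block for $\mon(S)$. Since $1<k<n$, this block is nontrivial and $\mon(S)$ is imprimitive.

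The main obstacle is verifying cleanly that block systems and proper ramified covers correspond in the way just outlined: in one direction, that the combinatorial quotient $S(\bar\sigma,\bar\tau)$ really is a legitimate square-tiled surface with $\pi$ fitting into the tower of covers of $\TT$; and in the other, that any proper ramified cover between square-tiled surfaces must be ``square-level'' (sending unit squares to unit squares), rather than some more general cover that subdivides or recombines squares. Both amount to careful bookkeeping about how cone points and the local translation charts match up under $\pi$, but once this is unwound, the algebraic correspondence between blocks and quotient surfaces becomes immediate.
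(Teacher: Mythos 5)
The paper does not actually prove this statement---it is quoted from Zmiaikou's thesis \cite{zmthesis}---so there is no in-paper argument to compare against; judged on its own, your proof is correct and is essentially the standard block-system/intermediate-cover correspondence that underlies the cited result. Both directions are sound: imprimitivity of $\Mon(S)$ gives the quotient surface $S(\bar\sigma,\bar\tau)$ together with the square-by-square translation map $\pi$, which is a proper ramified cover because the block is nontrivial (so both $\pi$ and the map $S'\to\TT$ have degree $>1$, and transitivity of the induced action on blocks makes $S'$ connected); conversely, a proper cover forces a nontrivial block system. The one step you flag as ``bookkeeping'' that deserves to be made explicit is why $\pi$ is square-level with exactly $k$ squares over each square of $S'$: since $h=h'\circ\pi$ and $h\colon S\to\TT$ is branched only over the lattice point, any ramification point of $\pi$ is a cone point of $S$, hence a vertex; so $\pi$ is an unramified local translation over the open squares of $S'$, each open square of $S$ maps isometrically onto one of them (connectedness of the interior plus $\pi=(h'|_{\rm square})^{-1}\circ h$ there), and each open square of $S'$ has exactly $\deg\pi=k$ square preimages. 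With that sentence added, and the observation that $k<n$ because the definition of a proper cover requires $\deg h'>1$, the argument is complete.
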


We will use this proposition in Section \ref{sec:primitivity} to characterize primitivity of square-tiled surfaces in $\calH(1,1)$ in terms of geometric parameters that define them.

\section{Cylinder Diagrams}\label{sec:cyldiag}
In this section, we study the geometry of square-tiled surfaces by studying their system of horizontal saddle connections. Let $S$ be a square-tiled surface and consider a graph $\Gamma$ on $S$ with the vertex set as the cone points of $S$, and horizontal saddle connections as the edge set. Since every square-tiled surface has a complete cylinder decomposition in the horizontal direction, the complement $S \setminus \Gamma$ is a collection of horizontal cylinders of $S$. 

Since $S$ is an orientable surface, we take an orientation on $S$ and endow the horizontal foliation of $S$ with a compatible orientation. This orientation induces an orientation on the edges of $\Gamma$, and hence we obtain an oriented graph. Moreover, given any vertex $v$ of $\Gamma$, as $S$ is oriented, we get a cyclic order on the edges incident to $v$. Since $S$ is oriented, the orientation of the edges incident to $v$ alternate between orientation towards and away from $v$ as we move counterclockwise.

Taking an $\epsilon$ neighborhood of the edges of $\Gamma$, we obtain a \emph{ribbon graph} $R(\Gamma)$ which is a collection of oriented strips glued as per the cyclic ordering on the vertices of $\Gamma$. $R(\Gamma)$ is an orientable surface with boundary. There are two orientations that are induced on the boundary components of $R(\Gamma)$. The first orientation is the canonical orientation on the boundary components coming from the orientation of $R(\Gamma)$ as an oriented surface. The second orientation is induced by the oriented edges of $\Gamma$. Note that these orientations do not necessarily match on all boundary components. We say that a boundary component is \emph{positively oriented} if the two notions give the same orientation, and \emph{negatively oriented} if the two notions do not match.

The complement, $S \setminus R(\Gamma)$ is then a union of flat cylinders, each of whose two boundaries are glued to boundary components of $R(\Gamma)$, one positively oriented and one negatively oriented. Hence, the boundary components of $R(\Gamma)$ decompose into pairs so that each of the components in a pair bound the same cylinder in $S$, and have opposite signs of orientation. This motivates the following definition:

\begin{defn}[Separatrix Diagram] A \emph{separatrix diagram} is a pair $(R(\Gamma), P)$, where $\Gamma$ is a finite directed graph and $P$ is a pairing on the boundary components of the associated ribbon graph $R(\Gamma)$ such that 
\begin{enumerate}
\item edges incident to each vertex are cyclically ordered, with orientations alternating between to and from the vertex.
\item the boundary components of $R(\Gamma)$, in each pair defined by $P$, are of opposite orientation.
\end{enumerate}
\end{defn}
 
Note that from the process described above, to each square-tiled surface one can associate a separatrix diagram. Conversely, from each separatrix diagram, one obtains a closed orientable surface by gluing in topological cylinders between the paired boundary components. 

However, in order for the resulting surface to be a square-tiled surface, we first assign real variables representing lengths to the edges of $\Gamma$. Then each boundary component of $R(\Gamma)$ has as its length, the sum of the lengths of saddle connections that run parallel to it. To obtain a square-tiled surface from a separatrix diagram $(R(\Gamma), P)$, in each pair of boundary components determined by $P$, the lengths of the boundary components in the pair must be equal (so that one can glue in a metric cylinder with these boundary components). Imposing this condition, we obtain a system of linear equations with variables as the lengths of saddle connections. Hence, a square-tiled surface is obtained if and only if there exists a solution with positive integer lengths. 

\begin{defn}[Cylinder Diagram] A cylinder diagram is a realizable separatrix diagram.
\end{defn}

For more details, see \cite{zorich}.

We next classify the cylinder diagrams of surfaces in $\calH(1,1)$, the proof of which we present in Appendix \ref{sec:cyldiagclass}

\begin{proposition}\label{prop:cyldiagclass} There are 4 distinct cylinder diagrams in $\calH(1,1)$. 
\end{proposition}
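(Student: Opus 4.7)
The plan is to enumerate all separatrix diagrams $(R(\Gamma), P)$ consistent with $\calH(1,1)$ and then check which are realizable.

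First I would fix the local combinatorial data. Each cone point of a surface in $\calH(1,1)$ has angle $4\pi$, so it emits two horizontal separatrices to the right and two to the left, making each of the two vertices of $\Gamma$ have valence $4$ with orientations alternating around the vertex. Thus $\Gamma$ has $v=2$ vertices and $e=4$ directed edges, and its ribbon graph $R(\Gamma)$ has $\chi(R) = v-e = -2$. If $c$ is the number of horizontal cylinders then $R$ has $2c$ boundary components, so the relation $\chi(R) = 2 - 2g_R - 2c$ gives $g_R+c = 2$, i.e.\ $c \in \{1,2\}$.

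Next I would list the directed multigraphs on the vertex set $\{P_1, P_2\}$ with in-degree and out-degree $2$ at each vertex, up to swapping the two vertices. A small case split on the number of edges from $P_1$ to $P_2$, from $P_2$ to $P_1$, and the number of loops at each vertex produces a finite list. For each such $\Gamma$ I would then enumerate its ribbon structures, i.e.\ cyclic orderings of the four half-edges at each vertex respecting the alternation of orientations, taken up to graph automorphism. Tracing the boundary of $R(\Gamma)$ in the standard way produces the boundary cycles; comparing each boundary traversal direction with the canonical orientation coming from the orientation of $R(\Gamma)$ then labels each boundary component as positive or negative, and I would enumerate the pairings $P$ that match each positive component to a negative one, as required by the separatrix diagram definition.

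Finally, for each candidate $(R(\Gamma), P)$ I would write down the realizability system: assign a positive length variable to each edge of $\Gamma$, so that the length of each boundary component is determined as a sum of these variables, and the condition $\text{length}(\beta) = \text{length}(\beta')$ for each pair $\{\beta,\beta'\}\in P$ gives a linear system; realizability means this system admits positive integer solutions. The main obstacle will be bookkeeping: the same cylinder diagram can arise from several different choices of $(\Gamma, \text{cyclic order}, P)$ after quotienting by graph automorphisms and cylinder/boundary relabeling, so the delicate step is to fix an invariant strong enough to separate inequivalent diagrams but coarse enough to identify true repetitions --- for instance, the unordered list of cylinders, each recorded by its top and bottom boundary words in the horizontal saddle connections together with the cone-point pattern between them. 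Carrying this enumeration to completion will show that exactly four equivalence classes of realizable separatrix diagrams remain, which are the four cylinder diagrams $A$, $B$, $C$, $D$ drawn in Theorem~\ref{thm:maincounts}.
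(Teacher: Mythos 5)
Your overall strategy---enumerate the directed ribbon-graph structures on two $4$-valent vertices with alternating orientations, up to symmetry, then look for orientation-compatible pairings and check realizability of the resulting linear system in positive integers---is essentially the paper's: it lists the $24$ matchings of outgoing to incoming half-edges, quotients by the order-$4$ symmetry group (vertex swap and rotation by $\pi$) to get six candidate graphs, discards two because their ribbon graphs have unequal numbers of positively and negatively oriented boundary components, and then checks that each of the remaining four graphs admits exactly one valid pairing.

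However, your preliminary Euler-characteristic step contains a genuine error, and it is not harmless. From $\chi(R(\Gamma))=-2$ you deduce $2-2g_R-2c=-2$, hence $c\in\{1,2\}$; but the identity $\chi=2-2g-b$ is valid only for a \emph{connected} surface with boundary, and the horizontal separatrix graph $\Gamma$ of a surface in $\calH(1,1)$ need not be connected even though the surface is---connectivity of $S$ can be supplied entirely by the cylinders. Indeed, the paper's diagram $D$ is a \emph{three}-cylinder diagram (it is parametrized by three heights $p,q,r$ and three shears $\alpha,\beta,\gamma$); it arises from the candidate graph in which each cone point carries two loops, so $\Gamma$ has two components, $R(\Gamma)$ has six boundary components, and $c=3$. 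If you use your bound $c\le 2$ to prune candidates, or implicitly restrict to connected $\Gamma$ when listing the directed multigraphs, you will find only three cylinder diagrams and the proposition fails. The fix is to drop that bound, or to allow disconnected $\Gamma$ and use $\chi=2m-2\sum_i g_i - 2c$ when $R(\Gamma)$ has $m$ components; with that correction, the rest of your enumeration (including the realizability check via positive integer solutions, which the paper only sketches with ``one checks'') goes through and yields exactly the four diagrams $A$, $B$, $C$, $D$.
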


Using Proposition \ref{prop:cyldiagclass} we make Figure \ref{fig:cylindertypes} that show the cylinder diagrams in $\calH(1,1)$ and prototypical surfaces arising from them and the parameters associated to these surfaces. The gluings are indicated by the dotted lines, and if dotted lines are missing then the gluing is by the obvious opposite side horizontal or vertical translation. We parametrize each of these prototypes by the lengths and heights of the cylinders, the lengths of the horizontal saddle connections, and the amount of shear (twist) on the cylinders. In all of the parametrization, $p, q, r$ are heights of cylinders, $j, k, l, m$ are lengths of horizontal saddle connections, and $\alpha, \beta, \gamma$ are shears in the cylinders.

As shown in the Figure \ref{fig:cylindertypes}:
\begin{itemize}
\item Cylinder diagram A is parametrized by $(p, j, k, l, m, \alpha)$ 
\item Cylinder diagram B is parametrized by $(p, q, k, l, m, \alpha, \beta)$ where $\alpha$ is the shear in the longer cylinder and $\beta$ the shear in the shorter cylinder,
\item Cylinder diagram C is parametrized by $(p, q, k, l, m, \alpha, \beta)$ where $\alpha$ is the shear in the cylinder of width $p$ and $\beta$ is the shear in the cylinder of width $q$.
\item Cylinder diagram D is parametrized by $(p, q, r, k, l, \alpha, \beta, \gamma)$ where $\alpha$, $\beta$, $\gamma$ are the shears in the cylinders with width $p, q, r$ respectively.
\end{itemize}

We note that these parameters are not unique as stated. In order to use them to count the surfaces, we first need a unique set of parameters for each cylinder diagram. The following propositions give unique parametrizations of the these surfaces:

\begin{proposition}[Uniqueness of cylinder diagram A parameters]\label{prop:typeIparam}
Let $(p,j,k,l,m, \alpha) \in \NN^6$ such that $p (j+k+l+m) = n$. The set of such tuples, $ \Sigma_A = S_1 \cup S_2 \cup S_3 \cup S_4 \cup S_5$, where
\begin{gather*}
S_1 := \{ (p, j, k, l, m, \alpha)\mid j< k, l,  m, 0 \leq \alpha < n/p\} \hspace{1cm} S_2 := \{ (p, j, k, l, m, \alpha) \mid j =l < k \leq m,  0 \leq \alpha < n/p\}\\
S_3 := \{ (p, j, k, l, m, \alpha) \mid j =k < l, m,  0 \leq \alpha < n/p\} \hspace{1cm} S_4 := \{ (p, j, k, l, m, \alpha) \mid  j=k=l < m,  0 \leq \alpha < n/p\}\\
S_5 = \{(p, j, k, l, m, \alpha) \mid j =k=l=m, 0 \leq \alpha < n/(2p)\}
\end{gather*}
uniquely parametrizes $n$-square surfaces in $\calH(1,1)$ with cylinder diagram A.
\end{proposition}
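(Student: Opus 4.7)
The plan is to identify the group $G$ of reparametrizations of tuples $(p, j, k, l, m, \alpha) \in \NN^6$ with $p(j+k+l+m) = n$ that preserve the underlying surface with cylinder diagram A, and then to verify that $\Sigma_A = S_1 \cup \dots \cup S_5$ contains exactly one tuple per $G$-orbit.

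The first step is to read off $G$ from the structure of cylinder diagram A, which consists of a single horizontal cylinder of height $p$ and circumference $j+k+l+m$ with four horizontal saddle connections along its boundary. Reparametrizations arise from relabellings of the saddle connections that are compatible with the cyclic structure of the cylinder diagram, together with compensating shifts of the twist $\alpha$ coming from the change of reference point. The shape of the conditions in $S_1, \dots, S_5$ --- in particular the reduced twist range $[0, n/(2p))$ in $S_5$ and the extra constraint $k \leq m$ in $S_2$ --- indicates that $G$ is the order-two subgroup generated by the half-turn $(j,k,l,m) \mapsto (l, m, j, k)$, which preserves the cyclic arrangement and shifts $\alpha$ by half the cylinder's width exactly when the length tuple is fixed.

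The second step is a case analysis by the multiplicity pattern of the minimum of $\{j, k, l, m\}$. When $j$ is the unique minimum, the orbit contains one representative with the minimum at position $1$, giving $S_1$. When the minimum occurs exactly at positions $\{1,3\}$ (so $j = l$, $j < k, m$), the orbit contains $(j, k, j, m)$ and $(j, m, j, k)$, and the condition $k \leq m$ selects the canonical representative, giving $S_2$. When the minimum occurs exactly at positions $\{1, 2\}$ (so $j = k$, $j < l, m$), only $(j, j, l, m)$ has minima at positions $1$ and $2$, giving $S_3$. When three of the four are minima ($j = k = l < m$), only $(j, j, j, m)$ has the non-minimum at position $4$, giving $S_4$. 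When all four are equal, the length tuple is fixed by $G$ but the half-turn shifts $\alpha$ by $n/(2p)$, producing the halved twist range in $S_5$. Disjointness of the $S_i$ is immediate from mutually exclusive equality patterns, and surjectivity follows by applying the half-turn to reduce any tuple not already in $\Sigma_A$.

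The main obstacle is the precise identification of $G$ and the computation of the twist shift under the half-turn, which requires a careful geometric analysis of how a vertical geodesic in the cylinder accumulates horizontal displacement under the relabelling. This also requires verifying that in edge cases, notably $S_2$ with $k = m$ where the length tuple is already invariant under the half-turn, the claimed twist range remains correct --- a subtlety that turns on the exact top--bottom gluing encoded in the cylinder diagram rather than on the length tuple alone. Once these details are in place, the uniqueness and completeness of $\Sigma_A$ reduce to a routine combinatorial check.
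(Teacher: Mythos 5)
Your identification of the reparametrization group is incorrect, and the error breaks both surjectivity and the twist ranges. In diagram A the bottom boundary of the cylinder carries the four saddle connections in the cyclic order $(j,k,l,m)$ while the top carries them in the reversed cyclic order $(j,m,l,k)$; since reversing a cyclic word commutes with rotating it, \emph{every} cyclic rotation of the labels preserves the diagram. So the relevant group is cyclic of order four, generated by the quarter-turn move $(p,j,k,l,m,\alpha)\mapsto (p,k,l,m,j,\alpha+j+k)$ (twists taken mod $n/p$), not the order-two group generated by the half-turn. Indeed the paper's proof uses exactly these odd rotations, $(p,j,k,l,m,\alpha)\sim(p,m,j,k,l,\alpha+k+l)$ and $(p,j,k,l,m,\alpha)\sim(p,k,l,m,j,\alpha+j+k)$, in addition to the half-turn $(p,j,k,l,m,\alpha)\sim(p,l,m,j,k,\alpha+k-m)$. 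With only the half-turn, surjectivity already fails: the half-turn only exchanges positions $1\leftrightarrow 3$ and $2\leftrightarrow 4$, so a tuple whose unique shortest saddle connection is $k$ (or $m$), or one with $k=l<j,m$, or $(m,j,j,j)$-type tuples, has no element of $\Sigma_A$ in its orbit, and the corresponding surfaces would be left unparametrized.

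Your twist-shift rule is also wrong and is inconsistent with the very statement you are proving. The half-turn changes $\alpha$ by $k-m$, which is $0$ (not $n/(2p)$) precisely when the length tuple is fixed, i.e.\ when $j=l$ and $k=m$; if your rule held, the $k=m$ part of $S_2$ would need a halved twist range, contradicting the stated $0\leq\alpha<n/p$, while with the true shift your order-two group would force $S_5$ to carry the full range $[0,n/p)$, contradicting the stated $0\le\alpha<n/(2p)$. The halving in $S_5$ actually comes from the quarter-turn, which shifts $\alpha$ by $j+k=n/(2p)$ when all four lengths are equal. Finally, two structural remarks: reading the group off ``the shape of the conditions in $S_1,\dots,S_5$'' is circular, since those conditions are what you must justify; and even with the correct group, uniqueness requires showing that no identifications \emph{other} than these relabelings occur. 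The paper does this by comparing geometric invariants of the two putative parametrizations (equality of $p$ and of the shortest length, then shearing to produce a vertical cylinder of height $j$, and matching the holonomy vectors of the shortest closed curves and saddle connections to pin down $\alpha$, $k$, $l$, $m$ case by case); this step cannot be dismissed as ``a routine combinatorial check.''
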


\begin{proposition}[Uniqueness of cylinder diagram B parameters]\label{prop:typeIIparam}
The set
\begin{gather*}
\Sigma_B:=\{(p, q, k, l, m, \alpha, \beta) \in \NN^7| 0 \leq \alpha < k+l+m, 0 \leq \beta < m, p(k+l+m) + qm = n  \}
\end{gather*}
uniquely parametrizes $n$-square surfaces in $\calH(1,1)$ with cylinder diagram B.
\end{proposition}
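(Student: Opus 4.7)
The plan is to set up a bijection between $\Sigma_B$ and the set of $n$-square surfaces in $\calH(1,1)$ with cylinder diagram $B$, by describing in each direction how the data is canonically recovered.

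First, I would start with a surface $S$ of diagram $B$ and perform its horizontal cylinder decomposition. Cylinder diagram $B$ has one cylinder with a single saddle connection on each of its two boundary components (the \emph{short} cylinder) and one with three saddle connections on one boundary and a single one on the other (the \emph{long} cylinder); since these combinatorial types differ, the two cylinders of $S$ are canonically distinguished. I would read off the height $q$ and circumference $m$ of the short cylinder, and the height $p$ and circumference $k+l+m$ of the long cylinder. On the top of the long cylinder, the saddle connection of length $m$ is the one glued to the bottom of the short cylinder; the other two have lengths $k$ and $l$, and are distinguished from one another by their position relative to the $m$-edge under the canonical cyclic ordering of the boundary. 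The area equation $p(k+l+m)+qm=n$ is then simply the square-count.

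Second, for each cylinder the shear is measured as the horizontal offset between its top and bottom edges, well defined modulo the circumference. This produces a unique $\alpha\in\{0,\dots,k+l+m-1\}$ for the long cylinder and a unique $\beta\in\{0,\dots,m-1\}$ for the short cylinder, matching the ranges specified in $\Sigma_B$. Conversely, any tuple in $\Sigma_B$ assembles to a unique square-tiled surface of diagram $B$ by gluing two parallelograms of dimensions $(k+l+m)\times p$ and $m\times q$ with the prescribed shears and combinatorics, and the area condition ensures it has exactly $n$ squares. The inverse map is well defined since the cylinder decomposition, the saddle-connection lengths, and the shears are all intrinsic to the surface.

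The main obstacle, and the reason this proposition contrasts so cleanly with Proposition~\ref{prop:typeIparam} (which required the case-split $S_1\cup\cdots\cup S_5$), is verifying that diagram $B$ admits no residual symmetry identifying distinct tuples. Here the asymmetry does the work for us: the short and long cylinders are combinatorially distinguishable, the short cylinder's attachment canonically pins down the $m$-edge on the long cylinder, and the orientation of the boundary distinguishes $k$ from $l$. Once these three rigidifications are checked against the ribbon-graph structure of $B$ established in Section~\ref{sec:cyldiag} and Appendix~\ref{sec:cyldiagclass}, no further identifications among tuples of $\Sigma_B$ survive, and the bijection is established.
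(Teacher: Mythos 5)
Your outline has the right skeleton---recover every parameter as an intrinsic invariant of the surface---but it defers exactly the verifications that constitute the proof, and the hardest one is passed over with an assertion. The substantive issue is the shears. Declaring that the shear of a cylinder is ``the horizontal offset between its top and bottom edges, well defined modulo the circumference'' presupposes what must be shown: that no two distinct values of $(\alpha,\beta)$ in the stated ranges yield the same square-tiled surface after cut and paste. This is not a formality; in diagram A the analogous statement fails (when all four horizontal saddle connections have equal length, $\alpha$ and $\alpha+n/2$ give the same surface, which is why Proposition \ref{prop:typeIparam} restricts $\alpha<n/(2p)$ in $S_5$), so for diagram B one must actually argue it. The paper does so by normalizing one parametrization to shear zero and exhibiting geometric witnesses whose existence pins down the other shear: a vertical saddle connection transverse to the core of the short cylinder (using that each of its boundary circles carries a single singularity) forces $\beta=\beta'$, and then a vertical simple closed curve of length $p+q$ crossing both core curves exactly once (Figure \ref{fig:typeIIshearequality}) forces $\alpha=\alpha'$. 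Nothing in your proposal plays the role of these witnesses; ``the shears are all intrinsic to the surface'' is a restatement of the conclusion, not an argument.

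The second gap is the separation of $k$ from $l$. Your rule ``distinguished by their position relative to the $m$-edge under the canonical cyclic ordering'' is invoked rather than checked against the combinatorics of diagram B, and as stated it is ambiguous: the long cylinder in fact has three saddle connections on each of its two boundary circles (not three on one and a single one on the other, as you write), and each of those boundaries contains one of the two length-$m$ loops, so you must specify which boundary and which $m$-edge, and then verify that the tuple-to-surface construction respects that convention for every admissible shear---that verification is the content. The paper instead concedes that a priori the same surface could be read with $(k,l)$ and $(l,k)$ interchanged and rules this out by comparing the sets of holonomy vectors of the four shortest saddle connections of vertical holonomy $-p$ transverse to the long cylinder's core, which forces $k=k'$ in that case. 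Your route could in principle replace that computation, since the cyclic order of saddle connections along a boundary geodesic is indeed an invariant, but only once the labelling convention is proved to match the construction; as written, the paragraph beginning ``the main obstacle'' names the obstacle and then declares it absent without an argument.
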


\begin{proposition}[Uniqueness of cylinder diagram C parameters]\label{prop:typeIIIparam}
Let $(p,q,k,l,m, \alpha, \beta) \in \NN^7$ such that $p(k+l) + q(l+m) = n$, $0 \leq \alpha < k+l$ and $0 \leq \beta < l+m$. The set of such tuples $\Sigma_C = S_1 \cup S_2 \cup S_3$ where 
\begin{gather*}
S_1 := \{(p, q, k, l, m, \alpha, \beta)\mid k < m \} \hspace{1cm} S_2 := \{(p, q, k, l, m, \alpha, \beta) \mid k = m, p < q\} \\ 
S_3 := \{\{(p, q, k, l, m, \alpha, \beta)\mid k = m, p = q, \alpha \leq \beta \}
\end{gather*}
uniquely parametrizes $n$-square surfaces in $\calH(1,1)$  with cylinder diagram C.
\end{proposition}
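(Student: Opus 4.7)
The approach is to show the map from $\Sigma_C$ to the set of $n$-square surfaces in $\calH(1,1)$ with cylinder diagram C is a bijection. I would first fix the ambient set of tuples $(p,q,k,l,m,\alpha,\beta)\in\NN^7$ satisfying only the arithmetic and range constraints $p(k+l)+q(l+m)=n$, $0\le\alpha<k+l$, $0\le\beta<l+m$, establish surjectivity onto surfaces, and then compute the fibers precisely.

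Surjectivity is straightforward: given such a surface $S$, horizontally cylinder-decompose it to recover two cylinders of heights $p, q$, a shared boundary saddle connection of length $l$, and outer saddle connections of lengths $k$ and $m$ (one per cylinder). Cylinder shears $\alpha, \beta$ are canonically elements of $\ZZ/(k+l)\ZZ$ and $\ZZ/(l+m)\ZZ$ once we use cone points as reference marks, so unique representatives in $[0, k+l)$ and $[0, l+m)$ exist. The area relation $p(k+l)+q(l+m)=n$ holds by construction.

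The real content is in computing the fibers. My claim is that cylinder diagram C has a single nontrivial automorphism $\iota$, the one that swaps its two cylinders, which acts on tuples by $(p,q,k,l,m,\alpha,\beta) \mapsto (q,p,m,l,k,\beta,\alpha)$. Two tuples in the ambient set parametrize the same surface if and only if they lie in the same $\langle\iota\rangle$-orbit. I would verify uniqueness of this automorphism by inspecting the ribbon graph directly: each cylinder's two boundary saddle connections have distinguishable roles (one is the shared connection of length $l$, the other is outer), so no orientation-preserving automorphism can nontrivially fix a cylinder.

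Given this, $S_1, S_2, S_3$ are checked to be a fundamental domain for the $\langle\iota\rangle$-action case by case. When $k\neq m$, $\iota$ always produces a different tuple, and the condition $k<m$ selects one representative per orbit, giving $S_1$. When $k=m$ but $p\neq q$, $\iota$ still produces a different tuple by swapping $p$ and $q$, and $p<q$ selects representatives, giving $S_2$. When $k=m$ and $p=q$, $\iota$ reduces to the swap $\alpha\leftrightarrow\beta$, with orbit size $2$ if $\alpha\neq\beta$ and size $1$ if $\alpha=\beta$; the inequality $\alpha\leq\beta$ picks a unique representative in either case, giving $S_3$. The main obstacle I anticipate is rigorously verifying that $\iota$ is the only nontrivial diagram automorphism — one must rule out automorphisms that individually flip a cylinder or cyclically permute boundary components. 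Once that is settled, the partition of $\Sigma_C$ into $S_1\cup S_2\cup S_3$ is a routine orbit computation.
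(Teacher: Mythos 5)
Your overall architecture (realize the redundancy as the involution $\iota\colon (p,q,k,l,m,\alpha,\beta)\mapsto(q,p,m,l,k,\beta,\alpha)$ and check that $S_1\cup S_2\cup S_3$ is a fundamental domain) is consistent with the paper: the paper's surjectivity step is exactly your $\iota$, implemented as an explicit cut-and-paste move, and your orbit bookkeeping in the three cases $k<m$; $k=m,\ p\neq q$; $k=m,\ p=q$ is correct. The gap is in the central claim that ``two tuples parametrize the same surface if and only if they lie in the same $\langle\iota\rangle$-orbit.'' That claim \emph{is} the injectivity statement, and your proposed justification --- counting automorphisms of the ribbon graph --- does not establish it. Two tuples define the same square-tiled surface when the resulting flat surfaces are cut-and-paste (translation) equivalent, and nothing a priori forces such an equivalence to come from a combinatorial automorphism of the cylinder diagram; to rule out extra coincidences you must show that the tuple can be recovered intrinsically from the surface, up to the swap. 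Compare diagram A in this same paper: there the naive parameter set has genuine extra identifications (hence the five-case set $\Sigma_A$, including a halved shear range in $S_5$), and detecting them requires geometric arguments about the twists, not an inspection of the graph. So the obstacle you flag (other graph automorphisms) is not the real one; the real one is that graph automorphisms are not the only conceivable source of repeated parameters.

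Concretely, the heights and circumferences are indeed intrinsic (the horizontal cylinder decomposition is canonical), but the twists are not ``canonically elements of $\ZZ/(k+l)\ZZ$ and $\ZZ/(l+m)\ZZ$'' just by ``using cone points as reference marks'': each boundary circle of a cylinder in diagram C carries more than one cone-point occurrence, so the reference is not unique, and different choices shift the measured twist by offsets such as $k$ or $l$. Showing that $\alpha$ and $\beta$ are nevertheless determined by the surface (up to $\iota$) is exactly where the paper does its work: it normalizes one twist to zero by shearing and then exhibits distinguished vertical objects --- a length-$p$ vertical simple closed curve through a singularity when the circumferences differ, the pair of closed saddle connections with holonomies $(\alpha,p)$ and $(\alpha-(k+l),p)$ when $k=m$ but $p\neq q$, and the shortest closed saddle connection with positive vertical holonomy when $k=m$ and $p=q$ (this is where the convention $\alpha\le\beta$ is actually used to decide which twist is which) --- whose existence or holonomy pins down the twist differences. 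Your proof needs an argument of this kind (or an equivalent intrinsic description of the twists, e.g.\ via the set of holonomy vectors of saddle connections crossing each cylinder once) before the orbit computation can be invoked; as written, the hardest part of the proposition is asserted rather than proved.
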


\begin{proposition}[Uniqueness of cylinder diagram D parameters]\label{prop:typeIVparam}
Let $(p,q,r,k,l,\alpha, \beta, \gamma) \in \NN^8$ such that $(p+q)k + (r+q)l = n$, $0 \leq \alpha < k, 0 \leq \beta < l+k$ and $ 0 \leq \gamma < l$. The set of such tuples $\Sigma_D = S_1 \cup S_2 \cup S_3$ where
\begin{gather*}
S_1 := \{(p, q, r, k, l, \alpha, \beta, \gamma) \mid  k < l  \}  \hspace{1cm} S_2 := \{(p, q, r, k, l, \alpha, \beta, \gamma) \in \NN^8| k = l, p \leq r  \}\\
S_3 := \{(p, q, r, k, l, \alpha, \beta, \gamma) \in \NN^8| k = l, p = r, \alpha \leq \gamma  \}
\end{gather*}
uniquely parametrizes $n$-square surfaces in $\calH(1,1)$ with cylinder diagram D. 
\end{proposition}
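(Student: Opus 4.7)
The plan is to proceed in three steps: first, verify that every tuple in $\Sigma_D$ yields a well-defined $n$-square surface in $\calH(1,1)$ with cylinder diagram D; second, identify the combinatorial symmetries that can cause two (otherwise unrestricted) tuples to give the same unlabeled surface; and third, confirm that the defining conditions on $S_1, S_2, S_3$ select exactly one representative from each equivalence class.

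For the existence half, I would read off from the prototype in Figure~\ref{fig:cylindertypes} that the three horizontal cylinders have heights $p, q, r$ and circumferences $k, k+l, l$, giving total area $pk + q(k+l) + rl = (p+q)k + (r+q)l = n$. The shears $\alpha, \beta, \gamma$ are intrinsically defined only modulo the respective circumferences, so the bounds $0 \le \alpha < k$, $0 \le \beta < k+l$, $0 \le \gamma < l$ give the natural fundamental domains, and any such tuple glues up to produce a labeled surface with the claimed cylinder diagram.

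For uniqueness I would determine the combinatorial automorphism group of the unlabeled diagram. The two outer cylinders play symmetric roles with respect to the middle cylinder, so there is an involution $\iota$ exchanging them; it acts on tuples by $(p,q,r,k,l,\alpha,\beta,\gamma) \mapsto (r,q,p,l,k,\gamma,\beta,\alpha)$. Inspecting the ribbon graph I would argue that $\iota$ generates the full automorphism group: the middle cylinder cannot be interchanged with an outer one because their circumferences differ, and the shear ranges already quotient out Dehn twists within each individual cylinder. One then splits into cases. When $k \neq l$, $\iota$ always moves the tuple, so demanding $k < l$ in $S_1$ selects exactly one representative per orbit. When $k = l$ but $p \neq r$, $\iota$ still moves the tuple (swapping $p$ and $r$), and the condition $p < r$ (captured by $p \le r$ in $S_2$, with the $p = r$ case carved off into $S_3$) selects the representative. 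Finally when $k = l$ and $p = r$, $\iota$ acts only by the transposition $\alpha \leftrightarrow \gamma$, and $\alpha \le \gamma$ in $S_3$ picks the unique orbit representative (including the fixed locus $\alpha = \gamma$).

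The main obstacle is pinning down $\iota$ as the full automorphism group and verifying its action on the shears precisely --- in particular, that $\iota$ fixes $\beta$ rather than sending it to $(k+l) - \beta$ or to some rotated coordinate. This requires care because swapping the two outer cylinders exchanges the $k$-segment and $l$-segment on both the top and bottom boundary of the middle cylinder, and one must confirm that this simultaneous swap preserves the twist parameter $\beta$ verbatim. Once $\iota$ and its action on $(p,q,r,k,l,\alpha,\beta,\gamma)$ are nailed down, the case analysis above is routine.
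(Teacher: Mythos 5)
Your setup mirrors the paper's first half: the cut-and-paste involution $\iota\colon(p,q,r,k,l,\alpha,\beta,\gamma)\mapsto(r,q,p,l,k,\gamma,\beta,\alpha)$ is exactly the move the paper uses to show that every surface with diagram D is realized by some tuple of $\Sigma_D$, and your case split ($k<l$; $k=l$, $p\le r$; $k=l$, $p=r$, $\alpha\le\gamma$) is the right one. The gap is in the uniqueness half. You reduce it to the claim that $\iota$ generates the full group of identifications and that ``the shear ranges already quotient out Dehn twists within each individual cylinder,'' but that is precisely what has to be proved: one must show that the assignment tuple $\mapsto$ surface is injective on $\Sigma_D$, i.e.\ that two tuples with the same $(p,q,r,k,l)$ and shears in the stated fundamental ranges cannot assemble, after cut and paste, into the same translation surface. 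Inspecting the ribbon graph alone does not rule out such coincidences, and in fact there is a near-coincidence that must be excluded by a genuine geometric argument: each boundary circle of the middle cylinder is made of two saddle connections of lengths $k$ and $l$, so a twist difference $|\beta-\beta'|=k$ realigns one pair of singularities and nearly reproduces the same surface. The paper excludes it by counting vertical saddle connections of length $q$ transverse to the core of the long cylinder (two at twist difference $0$, only one at twist difference $l$, see Figure \ref{fig:typeivsingledouble}, or else a single vertical cylinder appears, a contradiction), and it pins down $\alpha$ and $\gamma$ by analogous saddle-connection arguments in the short cylinders, after first matching $q$ and then $k,l,p,r$ from the cylinder heights and circumferences; in the boundary case $k=l$, $p=r$ it compares holonomy vectors of the shortest saddle connections, using $\alpha\le\gamma$ and $\alpha'\le\gamma'$, to get $\alpha=\alpha'$.

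You do flag this as the ``main obstacle,'' but flagging it is not the same as closing it: verifying that $\iota$ acts on the twist data as stated and, more importantly, that no identification other than $\iota$ exists is the entire mathematical content of Proposition \ref{prop:typeIVparam}. To complete the argument you would need to work directly with translation invariants of the surface, as the paper does --- cylinder heights and circumferences to force the combinatorial parameters to agree, then existence and counts of vertical saddle connections after normalizing one shear to zero, and holonomy vectors of distinguished short saddle connections --- to conclude that any two tuples of $\Sigma_D$ producing the same surface coincide.
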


The non-uniqueness of parameters stems from cut and paste equivalence. For instance, given a surface with cylinder diagram D, if the bottom cylinder is longer than the top cylinder, (i.e. $k > l$ in the parametrization), one can interchange them via a cut and paste move so that the shorter cylinder is at the bottom. In other words, parameters $(p, q, r, k, l, \alpha, \beta, \gamma)$ and $(r, q, p, l, k, \gamma, \beta, \alpha)$ define the same surface. 

Hence we need to account for such duplicate parametrization. We detail the proofs in Appendix \ref{sec:cyldiagclass}.

\begin{figure}[h!]
\centering
\begin{tabular}{cc|cc}
\includegraphics[scale=0.33]{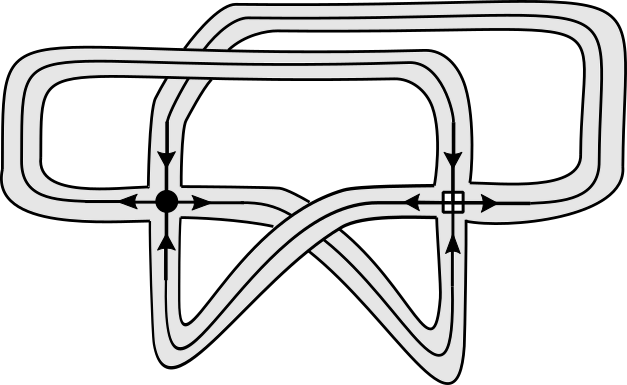}&\hspace{0.4cm}&\hspace{0.4cm}&\includegraphics[scale=0.33]{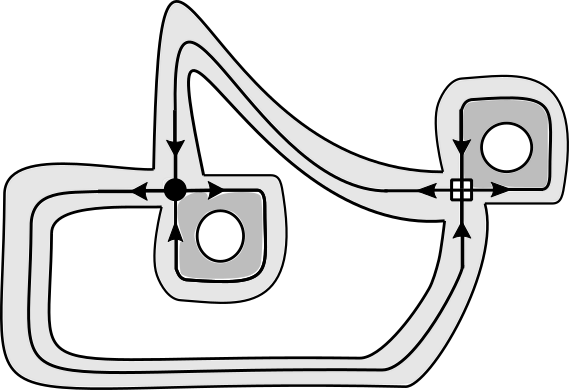}\\[10pt]

\includegraphics[scale=0.25]{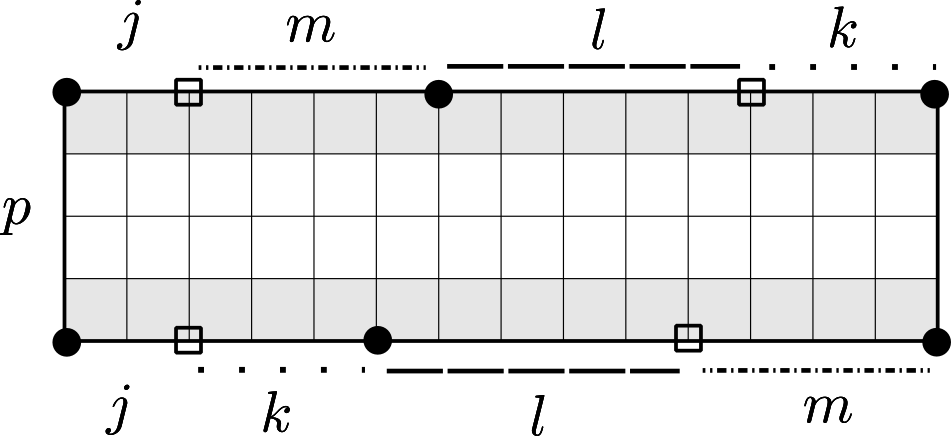}&\hspace{0.4cm}&\hspace{0.4cm}&\includegraphics[scale=0.25]{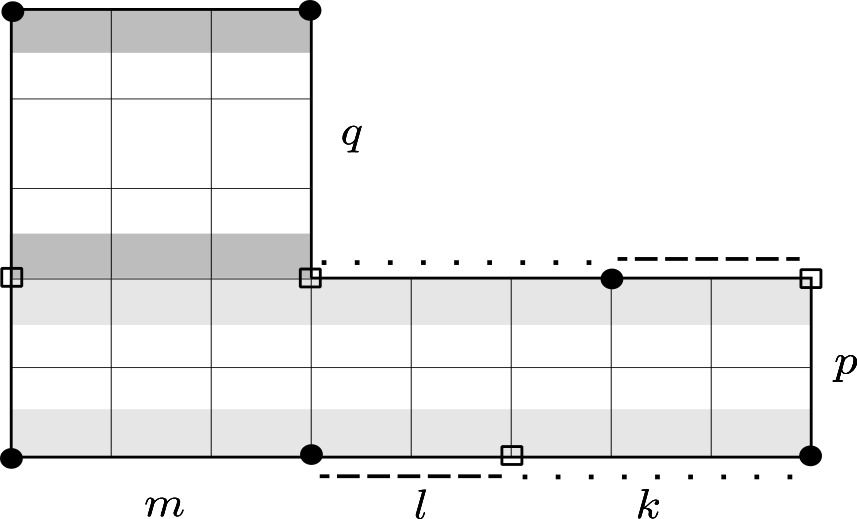}\\ [10pt]
  
   A &\hspace{0.4cm}&\hspace{0.4cm}& B \\[10pt]
   \hline
 & & &\\[10pt]
  
   \includegraphics[scale=0.33]{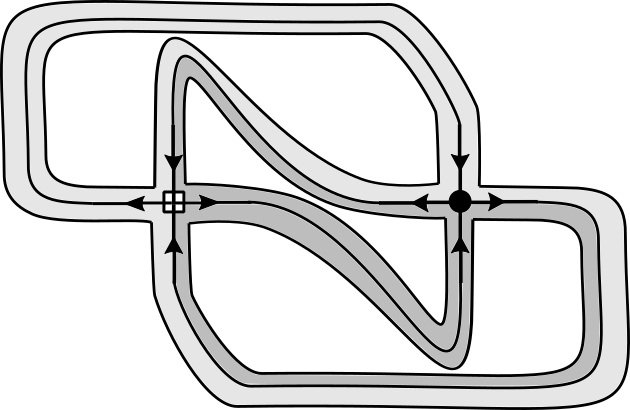}&\hspace{0.4cm}&\hspace{0.4cm}&\includegraphics[scale=0.33]{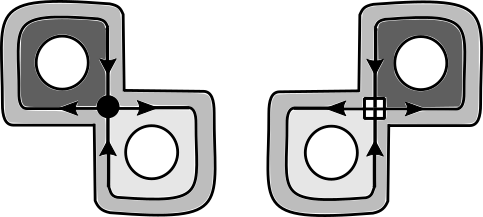}\\[10pt]
   \includegraphics[scale=0.25]{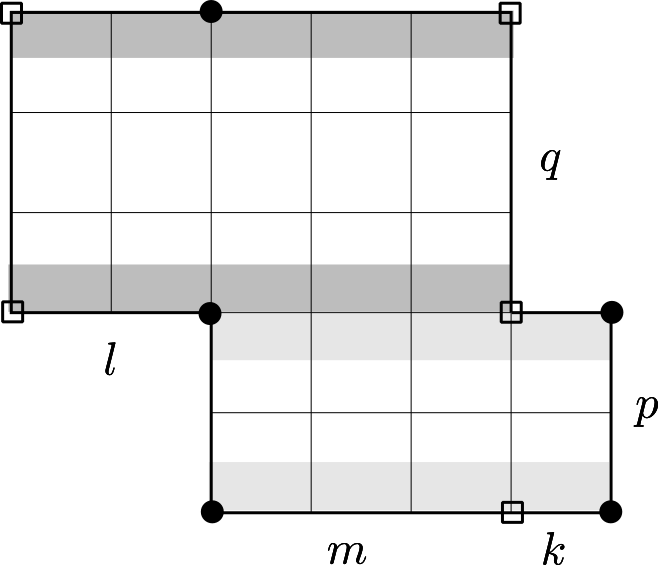}&\hspace{0.4cm}&\hspace{0.4cm}&\includegraphics[scale=0.25]{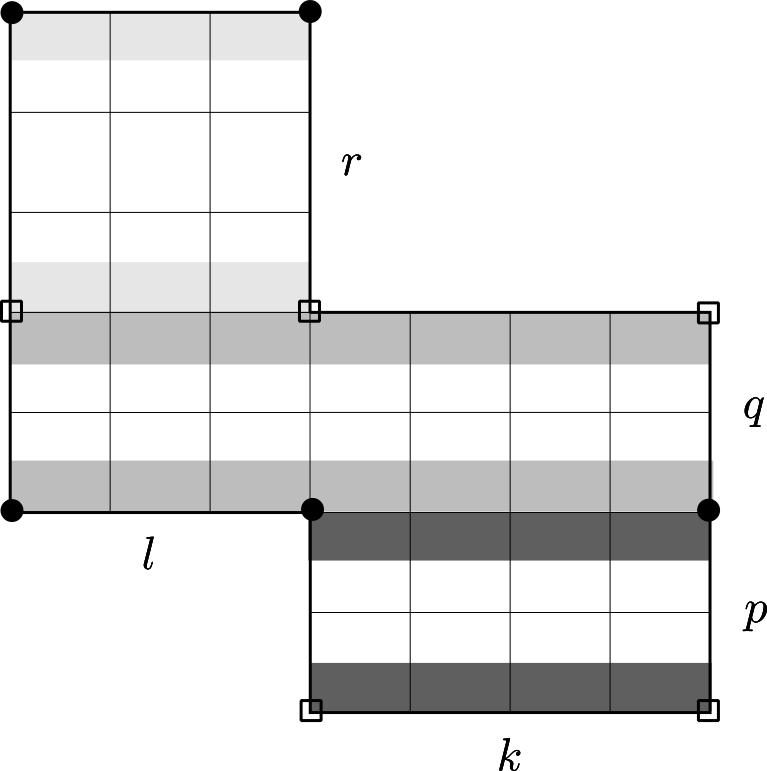} \\[10pt]
   C&\hspace{0.4cm}&\hspace{0.4cm}& D\\[10pt]
   \hline
\end{tabular}

 \caption{The 4 distinct cylinder diagrams for square-tiled surfaces of $\calH(1,1)$, and the prototypical square tiled surfaces (with parameters) that have those cylinder diagrams. Regions with the same shading have boundaries that are paired. The shears on the cylinders are omitted to avoid clutter of the pictures.}
   \label{fig:cylindertypes}
   \vspace{-0.3cm}
\end{figure}

\section{Primitivity criteria}\label{sec:primitivity}
In this section we will develop a characterization of primitivity for square-tiled surfaces in $\calH(1,1)$ according to the parameters used to describe their cylinder diagrams. 
\begin{notn} $m \wedge n$ will denote the greatest common divisor of the integers $m$ and $n$. 
\end{notn}
We start with a lemma, presented in \cite{zmprob} by Zmiaikou, that characterizes when certain integer vectors $\ZZ^2$. 
\begin{lemma}\label{lem:lattice1}
For integers $k, l, m, \alpha, \beta, p, q$, the vectors $(k, 0), (l, 0), (\alpha, p), (\beta, q)$ generate the lattice $\ZZ^2$ if and only if 
$$ p \wedge q =1 \hspace{1cm} \text{ and } \hspace{1cm} k \wedge l \wedge (p\beta - q \alpha) = 1$$
\end{lemma}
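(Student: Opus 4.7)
My plan is to reduce the statement to a standard determinantal criterion for spanning $\ZZ^2$, and then carry out the gcd bookkeeping.

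First I would recall (this is a consequence of Smith normal form, or can be verified directly) that a finite collection of integer vectors $v_1, \dots, v_s \in \ZZ^2$ generates $\ZZ^2$ as a lattice if and only if the greatest common divisor of the $2 \times 2$ minors $\det(v_i\ v_j)$ over all pairs $i<j$ equals $1$. Equivalently, arranging the vectors as rows of an $s\times 2$ matrix, its content (the gcd of its $2\times 2$ minors) must be $1$.

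Next I would apply this to the four vectors $(k,0),(l,0),(\alpha,p),(\beta,q)$. Listing the six pairwise determinants yields
\[
0,\quad kp,\quad kq,\quad lp,\quad lq,\quad p\beta - q\alpha,
\]
so the spanning condition is equivalent to
\[
\gcd\bigl(kp,\ kq,\ lp,\ lq,\ p\beta - q\alpha\bigr) \;=\; 1.
\]
Using the elementary identity $\gcd(xa,xb) = x\gcd(a,b)$, the gcd of the first four entries is $(k\wedge l)(p\wedge q)$, so the condition becomes
\[
\gcd\bigl((k\wedge l)(p\wedge q),\ p\beta - q\alpha\bigr) \;=\; 1.
\]

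Finally I would verify the equivalence with the two stated conditions. For the forward direction, since $p\wedge q$ divides both $p$ and $q$ it also divides $p\beta - q\alpha$, and it divides $(k\wedge l)(p\wedge q)$; hence $p\wedge q \mid 1$, forcing $p\wedge q = 1$. Given this, the remaining condition collapses to $\gcd(k\wedge l,\ p\beta - q\alpha) = 1$, i.e.\ $k\wedge l \wedge (p\beta - q\alpha)=1$. Conversely, if both stated conditions hold, then $(k\wedge l)(p\wedge q) = k\wedge l$, and coprimality with $p\beta - q\alpha$ is exactly the second condition, giving the desired gcd of $1$.

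The only potential obstacle is purely notational: making sure the sign convention and the identification $\gcd(xa,xb,ya,yb) = \gcd(x,y)\gcd(a,b)$ are applied cleanly, and that the spurious parameter $m$ in the statement is indeed irrelevant (it appears nowhere in the vectors). Everything else is a direct unpacking of the determinantal criterion.
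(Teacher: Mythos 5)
Your proof is correct. Note that the paper itself does not prove this lemma at all: it is imported from Zmiaikou's work \cite{zmprob} and used as a black box, so there is no internal argument to compare against. Your route --- reducing to the standard fact that integer vectors generate $\ZZ^2$ exactly when the gcd of the $2\times 2$ minors of the associated matrix is $1$ (an instance of Smith normal form / invariant factors), computing the six minors $0, kp, kq, lp, lq, \alpha q - p\beta$, and using $\gcd(kp,kq,lp,lq) = (k\wedge l)(p\wedge q)$ --- is a clean, self-contained derivation, and the two-way bookkeeping at the end (extracting $p\wedge q = 1$ from the fact that $p\wedge q$ divides both $(k\wedge l)(p\wedge q)$ and $p\beta - q\alpha$, then collapsing to $k\wedge l\wedge(p\beta - q\alpha)=1$, and reversing) is airtight. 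The two caveats you flag are indeed harmless: the minor of the last pair is $\alpha q - p\beta = -(p\beta - q\alpha)$, which changes nothing for gcd purposes, and the parameter $m$ in the statement is genuinely unused (it is a leftover from the geometric parametrizations where the lemma is applied with sums of saddle-connection lengths substituted for $k$ and $l$). The degenerate cases are also handled automatically by the minor criterion, e.g.\ $p=q=0$ gives $p\wedge q = 0 \neq 1$ and all vectors horizontal, while $k=l=0$ with $|p\beta - q\alpha| = 1$ correctly yields a unimodular pair.
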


Next, we obtain a sufficient condition for primitivity of a square-tiled surface in $\calH(1,1)$. We will use this Lemma to get sufficient number theoretic conditions for primitivity in each cylinder diagram.

\begin{lemma}\label{lem:latticeprim}
A square-tiled surface $S$ in $\calH(1,1)$ with $\AbsPer(S) = \ZZ^2$ is primitive.

\end{lemma}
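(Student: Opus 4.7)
The plan is to prove the contrapositive: assume $S$ is not primitive, and show $\AbsPer(S)\subsetneq \ZZ^2$. By the definition of primitivity, non-primitivity yields an intermediate square-tiled cover, i.e.\ a proper ramified covering $\pi\colon S \to S'$ to a square-tiled surface $S'\neq\TT$ through which the natural map $S\to\TT$ factors.

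The first main step is to pin down the topological type of $S'$ using Riemann--Hurwitz. Writing $d=\deg(\pi)\geq 2$ and $R=\sum_P(e_P-1)$ for the total ramification, we get
\[
2 \;=\; 2g(S)-2 \;=\; d\bigl(2g(S')-2\bigr) + R, \qquad d\geq 2,\ R\geq 0.
\]
A square-tiled surface carries a nonzero holomorphic $1$-form, so $g(S')\geq 1$; checking cases, the only solution with $d\geq 2$ is $g(S')=1$ and $R=2$. Thus $S'$ is a square-tiled torus, say $S'=\RR^2/L$. Since $S'\to\TT$ is a covering and $S'\neq\TT$, the lattice $L$ must be a proper sublattice of $\ZZ^2$.

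The second step uses the translation-cover structure to compare periods. Because $\pi$ is compatible with the translation gluings of both surfaces, the holomorphic $1$-forms satisfy $\omega_S=\pi^*\omega_{S'}$. For any closed saddle connection $\gamma$ on $S$,
\[
\int_\gamma \omega_S \;=\; \int_{\pi\circ\gamma}\omega_{S'} \;\in\; \AbsPer(S') \;=\; L,
\]
and letting $\gamma$ range over all closed saddle connections gives $\AbsPer(S)\subseteq L\subsetneq\ZZ^2$, contradicting the hypothesis.

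The main obstacle is the Riemann--Hurwitz step: we must rule out the possibility that $S'$ is a higher-genus intermediate cover whose own absolute period lattice already fills $\ZZ^2$. The argument goes through only because in genus $2$ every proper intermediate cover must be a torus, and tori have their absolute periods equal to their defining lattice. This is consistent with the authors' remark that in $\calH(1,1,1,1)$ the analogous statement fails: in genus $3$, proper intermediate covers can be higher-genus surfaces whose absolute period lattice is all of $\ZZ^2$.
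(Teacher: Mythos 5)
Your proof is correct and follows essentially the same route as the paper: force the intermediate cover to have genus one (the paper simply asserts this where you spell out Riemann--Hurwitz), pull back the $1$-form to see that $\AbsPer(S)$ lands in the torus's lattice $L \subseteq \ZZ^2$, and conclude $L = \ZZ^2$, i.e.\ the only square-tiled surface properly covered is $\TT$. Your contrapositive packaging and the paper's direct argument are the same proof in different clothing, so there is nothing to flag.
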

\begin{proof}
Assume $S$ covers another square-tiled surface $T$. Since $S$ is of genus 2, $T$ has to be of genus 1. If $(x,y) \in \ZZ^2$ is in $\AbsPer(S)$, then, $(x/m, y/m) \in \AbsPer(S)$ for some $m \in \NN$. So, $\AbsPer(S) = \ZZ^2$ implies that $\AbsPer(T) = \ZZ^2$ as well. Hence, $T$ is a genus 1 surface with $\ZZ^2$ as its lattice of absolute periods, which implies that $T$ is the standard torus $\TT$. Therefore, as $\TT$ is the only square-tiled surface covered by $S$, we conclude that $S$ is primitive.
\end{proof}

The next Lemma states necessary and sufficient conditions for primitivity in each of the four cylinder diagrams, in terms of their parameters. 
\begin{lemma}[Primitivity criterion by cylinder diagram]\label{lem:allprimitivity} Let $S \in \calH(1,1)$ be an $n$-square surface. Then 
\begin{enumerate}
\item if $S$ has cylinder diagram A, and is parametrized by $(p, j, k, l, m, \alpha)$, it is primitive if and only if 
$$p=1\hspace{1cm} \text{ and } \hspace{1cm}(j+k) \wedge (k+l) \wedge n= 1.$$
\item if S has cylinder diagram B, and is parametrized by $(p, q, k, l, m, \alpha, \beta)$, it is primitive if and only if 
$$ p \wedge q =1 \hspace{1cm} \text{ and } \hspace{1cm} (k +l) \wedge m \wedge (p\beta - q \alpha + (p+q) l) = 1.$$
\item if S has cylinder diagram C, and is parametrized by $(p, q, k, l, m, \alpha, \beta)$, it is primitive if and only if 
$$ p \wedge q =1 \hspace{1cm} \text{ and } \hspace{1cm} (k +l) \wedge (l+ m) \wedge (p\beta - q \alpha) = 1.$$
\item if S has cylinder diagram D, and is parametrized by $(p, q, r, k, l, \alpha, \beta, \gamma)$, it is primitive if and only if 
$$ (p+q) \wedge (r + q) =1 \hspace{1cm} \text{ and } \hspace{1cm} k \wedge l \wedge ((p-r)\beta + (p+q) \gamma - (r+q) \alpha) = 1.$$
\end{enumerate}
\end{lemma}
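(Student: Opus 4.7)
The plan is to reduce each of the four statements to Lemma~\ref{lem:lattice1} via Lemma~\ref{lem:latticeprim} and its converse. First I would establish the auxiliary fact that in $\calH(1,1)$ a surface $S$ is primitive if and only if $\AbsPer(S) = \ZZ^2$. The sufficiency is exactly Lemma~\ref{lem:latticeprim}. For necessity, suppose $\AbsPer(S)$ is a proper index-$d$ sublattice of $\ZZ^2$; then the translation structure factors through the quotient $\RR^2/\AbsPer(S)$, producing a square-tiled surface $T$ with $S\to T\to \TT$ and the first map of degree $d>1$, contradicting primitivity. (The remark in the introduction confirms this equivalence fails in $\calH(1,1,1,1)$ but holds in $\calH(1,1)$.) After this reduction, the problem for each diagram becomes the combinatorial task of exhibiting a generating set for $\AbsPer(S)$ and reading off when it spans $\ZZ^2$.

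For each diagram I would identify generators of $\AbsPer(S)$ of two types: (i) horizontal loops based at cone points along the boundary of a cylinder, which contribute vectors $(w,0)$ where $w$ is the length of a chain of horizontal saddle connections returning to the starting cone point; and (ii) \emph{transverse loops} that exit a cone point vertically, traverse one cylinder, possibly cross some horizontal edges, and return to the same cone point, contributing vectors of the form $(\text{shear} + \text{horizontal offset},\ \text{cylinder height})$. These are standard generators of $\AbsPer(S)$ coming from the horizontal cylinder decomposition.

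For diagram~A, the single cylinder has horizontal loops of lengths $j+k$ and $k+l$ at the two cone points and core curve length $n=j+k+l+m$, while the unique transverse loop contributes $(\alpha, p)$. Asking this rank-three set to generate $\ZZ^2$ forces $p=1$ and $(j+k)\wedge (k+l)\wedge n = 1$, exactly the stated criterion. For diagrams~B, C, D, I would apply Lemma~\ref{lem:lattice1} directly, taking the two horizontal-loop lengths as the lemma's $k,l$ and the transverse loops around the relevant cylinders as $(\alpha',p),(\beta',q)$ (and $(\gamma',r)$ in case~D, reduced to two independent generators using the core curves). The additive corrections $(p+q)l$ in~B, and $(p-r)\beta + (p+q)\gamma - (r+q)\alpha$ in~D, appear because a transverse loop based in one cylinder must cross a piece of horizontal saddle connection to reach a cone point in an adjacent cylinder, and these horizontal offsets get weighted by the heights of the cylinders being traversed when one takes integer combinations to clear out the unwanted cylinder-height contributions.

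The main obstacle is carrying out this diagram-chase cleanly: each of B, C, D requires careful choice of basepoint cone point, careful tracking of which horizontal edges a transverse loop crosses, and a linear combination step that eliminates all but one cylinder-height vector in order to apply Lemma~\ref{lem:lattice1} in the two-generator form stated. Once the generators and their horizontal components are correctly computed for each of the four diagrams (guided by the explicit pictures in Figure~\ref{fig:cylindertypes}), the criteria follow immediately from Lemma~\ref{lem:lattice1}. The verification is essentially bookkeeping, and I would relegate the detailed per-diagram calculations to the Appendix~\ref{sec:primcriterion}, stating only the outcome of each transverse-loop computation here.
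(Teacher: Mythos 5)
Your proposal is correct in outline, but it proves the necessity direction by a genuinely different mechanism than the paper. The paper argues necessity through the monodromy group: assuming the arithmetic conditions fail, Lemma~\ref{lem:lattice1} makes the lattice $L$ spanned by the parameter vectors a proper sublattice of $\ZZ^2$, the squares whose lower-left corners lie in $L$ are shown to form a nontrivial block, and Proposition~\ref{prop:primsurfprimgroup} then rules out primitivity (for diagram A the paper even runs a separate bare-hands block argument with residue classes to force $p=1$ and the gcd condition). You instead prove the converse of Lemma~\ref{lem:latticeprim} first --- primitivity forces the period lattice to be all of $\ZZ^2$, via the intermediate torus $\RR^2/\Lambda$ --- and then reduce all four diagrams uniformly to Lemma~\ref{lem:lattice1}. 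This inverts the paper's logic: the paper obtains Proposition~\ref{prop:absper} as a corollary of this lemma, whereas you establish it up front and derive the lemma from it. Your route is more geometric and uniform (diagram A is treated the same way as B, C, D) and makes transparent why the criterion is a lattice condition; the paper's route leans on Zmiaikou's group-theoretic criterion instead of an explicit intermediate cover. Note, however, that the bookkeeping you defer is exactly the paper's computation in Appendix~\ref{sec:primcriterion}: to use primitivity you must verify that the holonomy of \emph{every} closed curve --- equivalently, every edge-identification vector in the unfolded picture --- lies in the parameter lattice $L$, which is precisely the verification the paper performs when it shows the $L$-coloring of squares is a block.

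Two points of precision. First, the quotient construction must be taken modulo the full absolute period lattice (the image of $H_1(S;\ZZ)$ under the period map), or directly modulo $L$ once the containment above is established; the paper's $\AbsPer(S)$ is defined as the lattice generated by holonomies of closed saddle connections, and well-definedness of a map to $\RR^2/\AbsPer(S)$ is not automatic from that definition. Second, the degree of $S\to\RR^2/\Lambda$ is $n/d$, not $d$; the cover is proper because $d>1$ and $n/d>1$ (a genus-two surface cannot cover a torus with degree one), so your conclusion stands, and the parenthetical appeal to the introduction's remark about $\calH(1,1,1,1)$ is corroboration rather than part of the proof.
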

The proof is postponed to Appendix \ref{sec:primcriterion}. The general strategy to prove necessity of the number theoretic conditions is to first assume they are not satisfied, then show that this implies the monodromy group is not primitive, and hence the surface is not primitive.

We next present an alternate parametrization of the primitive $n$-square surfaces with cylinder diagram A. Even though the previous parametrization, in terms of the lengths of the horizontal saddle connections and heights and shears of the cylinders, is in lieu with the parametrization of the other cylinder diagrams, this new parametrization lends itself better to enumeration. 
To do this new parametrization, we set up a bijection between the previous parametrization in Proposition \ref{prop:typeIparam} under the primitivity conditions imposed by Lemma \ref{lem:allprimitivity} and the new set of parameters. We detail this bijection in Appendix \ref{sec:cyldiagclass}. 
\begin{lemma}\label{prop:typeIreparam}
The set of primitive $n$-square surfaces of $\calH(1,1)$ with cylinder diagram A is parametrized uniquely by the set
$$ \Omega := \{ (x, y, z, t) \in \NN^4| 1 \leq x < y < z < t \leq n, (z - x) \wedge (t-y) \wedge n = 1\}$$
\end{lemma}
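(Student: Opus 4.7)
The plan is to build an explicit bijection $\Phi$ between primitive $n$-square surfaces in $\calH(1,1)$ with cylinder diagram A and the set $\Omega$. By Proposition \ref{prop:typeIparam}, such a surface is uniquely described by a tuple $(p, j, k, l, m, \alpha) \in \Sigma_A$, and by Lemma \ref{lem:allprimitivity}(1), primitivity forces $p = 1$ together with $(j+k) \wedge (k+l) \wedge n = 1$. Geometrically, the underlying surface is a single cylinder of unit height and circumference $n$, and the four cone-point appearances along its top boundary sit at the positions $\alpha + 1$, $\alpha + 1 + j$, $\alpha + 1 + j + k$, $\alpha + 1 + j + k + l$ (mod $n$, taken in $\{1,2,\ldots,n\}$). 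I define $\Phi(1,j,k,l,m,\alpha) = (x,y,z,t)$ to be the increasing rearrangement of this 4-element set. Since $j,k,l,m \ge 1$ and their sum equals $n$, every partial sum lies strictly between $0$ and $n$, so the four positions are distinct and $1 \le x < y < z < t \le n$.

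To check $(x,y,z,t) \in \Omega$, I observe that sorting is a cyclic shift, so the gap vector $(y - x,\ z - y,\ t - z,\ n - t + x)$ is some cyclic rotation $(g_1, g_2, g_3, g_4)$ of $(j, k, l, m)$, and $(z - x,\ t - y) = (g_1 + g_2,\ g_2 + g_3)$. The key calculation is that $\gcd(g_i + g_{i+1},\ g_{i+1} + g_{i+2},\ n)$ is invariant under cyclic rotation: since $g_1 + g_2 + g_3 + g_4 = n$, we have $g_1 + g_2 \equiv -(g_3 + g_4) \pmod{n}$, so $g_1 + g_2$ may be replaced by $g_3 + g_4$ inside $\gcd(\,\cdot\,,\ g_2 + g_3,\ n)$; iterating this swap gives the cyclic invariance. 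Therefore $(z - x) \wedge (t - y) \wedge n = (j + k) \wedge (k + l) \wedge n = 1$, which is precisely the defining condition of $\Omega$.

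For the inverse, given $(x,y,z,t) \in \Omega$, form the cyclic gap vector $(g_1, g_2, g_3, g_4) := (y - x, z - y, t - z, n - t + x)$. By Proposition \ref{prop:typeIparam}, exactly one of its four cyclic rotations lies in $S_1 \cup \cdots \cup S_4$; take this rotation to be $(j,k,l,m)$ and set $\alpha + 1$ equal to the corresponding starting position among $\{x,y,z,t\}$. The subset $S_5$ does not occur for a primitive surface, since $j = k = l = m = n/4$ would force $\gcd(j+k,\ n) = n/2 > 1$. The cyclic invariance above guarantees that the recovered $(j,k,l,m)$ satisfies the primitivity condition, so the inverse is well-defined, and inverts $\Phi$ by construction. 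The main obstacle is the bookkeeping when the gap vector has repeated entries, in which case several rotations might superficially look valid and one must use the precise inequalities defining $S_2, S_3, S_4$ to pick the correct one; this routine case analysis is deferred to Appendix \ref{sec:cyldiagclass}.
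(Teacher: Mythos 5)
Your proposal is correct and follows essentially the same route as the paper's own argument: a forward map sending $(1,j,k,l,m,\alpha)\in\Sigma_A^P$ to the sorted quadruple of positions determined by the shear and the saddle-connection lengths, and an inverse built from the cyclic gap vector $(y-x,\,z-y,\,t-z,\,n-t+x)$ by selecting the unique cyclic rotation compatible with the normal form $S_1\cup\cdots\cup S_4$ (with $S_5$ ruled out by the gcd condition); your cyclic-invariance argument for $(z-x)\wedge(t-y)\wedge n$ is in fact a cleaner justification of the step the paper dismisses with ``one checks,'' while the repeated-entry case analysis you defer is exactly the paper's short check on where the minimum of the gap multiset sits. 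One caveat: for the paper's diagram A the saddle connections occur along the cylinder in cyclic order $(j,m,l,k)$ rather than $(j,k,l,m)$ --- e.g.\ the parameters $(1,1,2,3,4,2)$ give $(x,y,z,t)=(1,3,4,8)$, not $(3,4,6,9)$ --- but since the primitivity condition $(j+k)\wedge(k+l)\wedge n=1$ is a gcd of two cyclically consecutive pair sums under either ordering, this slip only affects the geometric description of your map, not the validity of the bijection.
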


Putting together the various cylinder wise primitivity criterion obtained in Lemma \ref{lem:allprimitivity} we get a converse of Lemma \ref{lem:latticeprim}. Then noting that $\AbsPer(S) = \Per(S)$ for square-tiled surfaces in $\calH(2)$, we recover the combined statement, which is stated in \cite{eskinmasurschmoll}.

\begin{proposition}\label{prop:absper}
A genus two square-tiled surface $S$ is primitive if and only if $\AbsPer(S) = \ZZ^2$.
\end{proposition}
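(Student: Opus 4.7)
The plan is to combine the forward direction, already established as Lemma \ref{lem:latticeprim} for $\calH(1,1)$, with a case-by-case converse drawn from the primitivity criteria in Lemma \ref{lem:allprimitivity}, and to handle $\calH(2)$ separately since a genus two translation surface lies in one of these two strata.

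For $S \in \calH(1,1)$, the $\Leftarrow$ direction is immediate from Lemma \ref{lem:latticeprim}. For the $\Rightarrow$ direction, I would work one cylinder diagram at a time. In each diagram, the parametrizations of Propositions \ref{prop:typeIparam}--\ref{prop:typeIVparam} make it straightforward to write down an explicit generating set for $\AbsPer(S)$: take closed saddle connections assembled from sums of horizontal edges along a single cylinder boundary (producing horizontal vectors), together with twisted cylinder cores that return to the same cone point (producing the slanted vectors). These generators can be arranged into the template $(k,0),(l,0),(\alpha,p),(\beta,q)$ of Lemma \ref{lem:lattice1}, and the resulting number-theoretic condition for them to span $\ZZ^2$ is exactly what Lemma \ref{lem:allprimitivity} says characterizes primitivity. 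For instance, in diagram C with parameters $(p,q,k,l,m,\alpha,\beta)$, the horizontal loops of lengths $k+l$ and $l+m$ together with the twisted cores $(\alpha,p)$ and $(\beta,q)$ generate $\AbsPer(S)$, and Lemma \ref{lem:lattice1} tells us they span $\ZZ^2$ precisely when $p\wedge q=1$ and $(k+l)\wedge(l+m)\wedge(p\beta-q\alpha)=1$, which matches Lemma \ref{lem:allprimitivity}(3). The other three diagrams are handled in the same spirit.

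For $S \in \calH(2)$, there is only one cone point, so every saddle connection starts and ends at the same point and is therefore closed; this immediately gives $\AbsPer(S) = \Per(S)$. If $\Per(S) = \Lambda$ with $\Lambda \subsetneq \ZZ^2$, then integrating the holomorphic one-form realizes $S$ as a square-tiled cover of $\RR^2/\Lambda$, which itself properly covers the standard torus $\TT$, so $S$ is not primitive. The contrapositive yields the implication primitive $\Rightarrow \AbsPer(S) = \ZZ^2$ in this stratum.

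The main obstacle will be the bookkeeping for diagram D, which has three cylinders and whose primitivity condition involves the expression $(p-r)\beta+(p+q)\gamma-(r+q)\alpha$. This does not fit the four-vector template of Lemma \ref{lem:lattice1} in the most obvious way; one has to choose two of the three twisted cylinder cores as the non-horizontal generators and express the third as an integer combination of those two together with the horizontal generators, tracking how the shears combine so that the resulting determinant reproduces the stated gcd condition. Once that reduction is carried out diagram-by-diagram, the four primitivity criteria of Lemma \ref{lem:allprimitivity} collectively become the single statement $\AbsPer(S) = \ZZ^2$, completing the converse.
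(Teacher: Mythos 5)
Your handling of $\calH(1,1)$ is essentially the paper's argument: Lemma \ref{lem:latticeprim} gives the direction $\AbsPer(S)=\ZZ^2\Rightarrow$ primitive, and the converse is obtained diagram by diagram by exhibiting absolute periods that fit the template of Lemma \ref{lem:lattice1}, so that the gcd conditions of Lemma \ref{lem:allprimitivity} become exactly the statement $\AbsPer(S)=\ZZ^2$ (this is how the appendix proof for diagram C, Lemma \ref{typeIIIprim}, concludes). Your anticipated obstacle in diagram D is not really one: the closed curves crossing the bottom-plus-middle and top-plus-middle cylinders have holonomies $(\alpha+\beta,\,p+q)$ and $(\beta+\gamma,\,q+r)$, and together with $(k,0),(l,0)$ they fit Lemma \ref{lem:lattice1} directly, since $(p+q)(\beta+\gamma)-(r+q)(\alpha+\beta)=(p-r)\beta+(p+q)\gamma-(r+q)\alpha$; this is precisely the substitution used in Lemma \ref{typeIValphabeta}.

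The genuine gap is in $\calH(2)$: the proposition is an equivalence, but there you only argue primitive $\Rightarrow \AbsPer(S)=\ZZ^2$ (via the cover of $\RR^2/\Lambda$ when $\Per(S)=\Lambda\subsetneq\ZZ^2$). The converse in $\calH(2)$ is never addressed, and you cannot simply quote Lemma \ref{lem:latticeprim}, which is stated only for $\calH(1,1)$. The repair is short and should be said explicitly: the proof of Lemma \ref{lem:latticeprim} uses only that $S$ has genus two --- any proper translation cover target must have genus one by an Euler characteristic count, and a genus one square-tiled surface whose absolute period lattice contains $\ZZ^2$ is the standard torus $\TT$ --- so it applies verbatim to surfaces in $\calH(2)$. (The paper itself disposes of $\calH(2)$ differently, by noting $\AbsPer(S)=\Per(S)$ there and invoking the known characterization of primitivity via $\Per(S)$ stated in \cite{eskinmasurschmoll}; either route closes the gap, but as written your proof of the biconditional is incomplete in that stratum.)
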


We note that an analogous statement for genus 3 is not true, and provide an example of a square-tiled surface with $\AbsPer(S) = \ZZ^2$ but is not primitive, in Figure \ref{fig:nonprimexample}.

\begin{figure}[!h]
\begin{tabular}{ccc}
\includegraphics[scale=0.3]{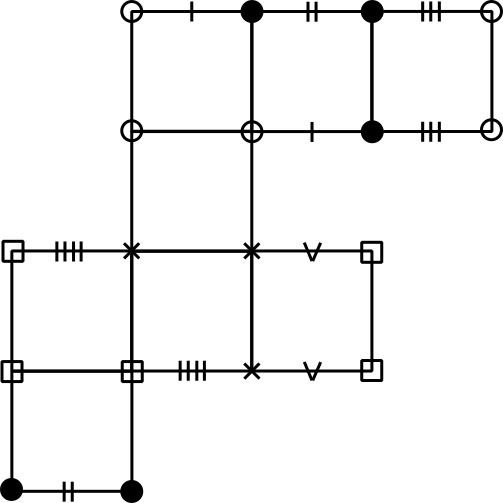} & \hspace{1.5cm} & \includegraphics[scale=0.3]{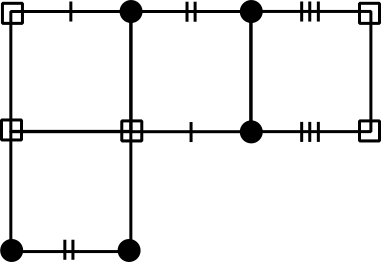}\\
(i) & \hspace{1.5cm} & (ii)
\end{tabular}
\caption{(i)A genus 3 non-primitive square-tiled surface $S$ with $\AbsPer(S) = \ZZ^2$. (ii)A genus 2 square-tiled surface that $S$ covers.}
\label{fig:nonprimexample}
\end{figure}

\section{Enumeration of Primitive Square-tiled Surfaces}\label{sec:enumeration}

Now that we have number theoretic conditions ensuring primitivity of the square-tiled surfaces with different cylinder diagrams, we can count primitive square-tiled surfaces.  
We begin with the following lemma about enumerating integers that are relatively prime to $d$ in a given interval of length $d$ with integer endpoints:
\begin{lemma}\label{lem:dprimeintegers}
Let $\beta \in \ZZ$. For any positive integer $d$,  the number of integers in the interval $[\beta, \beta+d)$ that are relatively prime to $d$ is given by $\phi(d)$. 
\end{lemma}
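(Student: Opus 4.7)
The plan is to reduce the statement to the defining property of Euler's totient function $\phi$ by passing to residues modulo $d$. The key observation is that the $d$ consecutive integers $\beta, \beta+1, \dots, \beta+d-1$ form a complete residue system modulo $d$, so reduction mod $d$ gives a bijection between the interval $[\beta, \beta+d) \cap \ZZ$ and the set $\{0, 1, \dots, d-1\}$.

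First I would show that this bijection preserves the property of being coprime to $d$. This follows from the standard identity $\gcd(n, d) = \gcd(n + kd, d)$ for any $k \in \ZZ$, which implies that an integer $n$ is coprime to $d$ if and only if its residue $n \bmod d$ is coprime to $d$. Consequently, the number of integers in $[\beta, \beta+d)$ that are coprime to $d$ equals the number of residues in $\{0, 1, \dots, d-1\}$ that are coprime to $d$, and the latter is $\phi(d)$ by the standard definition of Euler's totient.

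The only mild subtlety is the case $\beta \leq 0$, where some elements of the interval are non-positive; but the gcd identity above holds for all integers (taking the convention $\gcd(0,d) = d$, which is coprime to $d$ only when $d = 1$), so the argument goes through uniformly for all $\beta \in \ZZ$. There is no serious obstacle here — the statement is essentially a restatement of the definition of $\phi(d)$ once the bijection is set up, and the proof should take only a few lines.
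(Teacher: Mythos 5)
Your argument is correct and is essentially the same as the paper's: both proofs observe that $\beta, \beta+1, \dots, \beta+d-1$ form a complete residue system mod $d$ and that coprimality to $d$ depends only on the residue class, so the count equals the number of units in $\ZZ/d\ZZ$, namely $\phi(d)$.
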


\begin{proof}
Given an integer in $[\beta, \beta+d)$, note that it is relatively prime to $d$ if and only if its residue class mod $d$ is a unit in $\ZZ/d\ZZ$. Since each residue class mod $d$ has one and only representative in $[\beta, \beta+d)$, the number of relatively prime integers to $d$ in $[\beta, \beta+d)$ is the number of units in $\ZZ/d\ZZ$, which is $\phi(d)$. 
\end{proof}

Next, we prove a generalization of  Lemma 8 presented in \cite{zmprob} by Zmiaikou that will be used to count the contribution of the shear parameters to our enumeration.
\begin{lemma}\label{zmiakoulem}
Let $p, q, k, l \in \NN$ such that $p \wedge q = 1$ and $\beta_1,\beta_2 \in \ZZ$. The number of distinct pairs $(\alpha, \gamma) \in \ZZ^2$ such that 
$$ \beta_1 \leq \alpha < k+\beta_1, \beta_2 \leq \gamma < l + \beta_2 \text{ and } k \wedge l \wedge (p \gamma - q \alpha) = 1$$ is equal to $kl \cdot\frac{\phi(k\wedge l)}{k \wedge l}$.
\end{lemma}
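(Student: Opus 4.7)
Let $d = k \wedge l$. The plan is to reduce the count to a purely modular one, then evaluate that count by Möbius inversion, exploiting $p \wedge q = 1$.

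First, observe that the condition $k \wedge l \wedge (p\gamma - q\alpha) = 1$ is equivalent to $\gcd(p\gamma - q\alpha,\, d) = 1$, which depends only on the residues of $\alpha$ and $\gamma$ modulo $d$. Since $d \mid k$ and $d \mid l$, the intervals $[\beta_1,\beta_1+k)$ and $[\beta_2,\beta_2+l)$ contain exactly $k/d$ and $l/d$ representatives of each residue class modulo $d$, respectively (this uses Lemma \ref{lem:dprimeintegers} in spirit: in an interval of length $m d$ with integer endpoints, each residue class mod $d$ appears exactly $m$ times). Therefore the total count factors as
\begin{equation*}
\frac{k}{d} \cdot \frac{l}{d} \cdot N, \qquad N := \bigl|\{(a,c) \in (\ZZ/d\ZZ)^2 : \gcd(pc - qa,\, d) = 1\}\bigr|.
\end{equation*}
The goal is reduced to showing $N = d\,\phi(d)$, since that gives the desired total $kl \cdot \phi(d)/d$.

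To compute $N$, I would apply the standard Möbius identity $\mathbf{1}[\gcd(n,d)=1] = \sum_{e \mid \gcd(n,d)} \mu(e)$ to obtain
\begin{equation*}
N = \sum_{e \mid d} \mu(e) \cdot M(e), \qquad M(e) := \bigl|\{(a,c) \in (\ZZ/d\ZZ)^2 : e \mid pc - qa\}\bigr|.
\end{equation*}
Since the condition $e \mid pc - qa$ depends only on $(a,c) \bmod e$, we have $M(e) = (d/e)^2 \cdot M'(e)$, where $M'(e)$ counts solutions of $pc' \equiv qa' \pmod{e}$ in $(\ZZ/e\ZZ)^2$. The key input is that $p \wedge q = 1$ forces $M'(e) = e$: for any prime power $\ell^a \| e$, at least one of $p,q$ is a unit mod $\ell^a$, so the congruence uniquely determines one variable from the other, giving $\ell^a$ solutions, and CRT multiplies these to $e$.

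Combining, $M(e) = d^2/e$, and therefore
\begin{equation*}
N = d^2 \sum_{e \mid d} \frac{\mu(e)}{e} = d^2 \cdot \frac{\phi(d)}{d} = d\,\phi(d),
\end{equation*}
using the classical identity $\phi(d)/d = \sum_{e \mid d} \mu(e)/e$. Substituting back yields $kl \cdot \phi(d)/d$, completing the proof. The main obstacle is really just the computation of $M'(e)$: it is essential to use $\gcd(p,q)=1$, because if both $p$ and $q$ shared a prime factor with $d$ the count would not collapse to $e$, and the clean formula would fail.
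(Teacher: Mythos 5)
Your proof is correct, and it takes a genuinely different route from the paper's at the key step. Both arguments begin the same way: the coprimality condition $k \wedge l \wedge (p\gamma - q\alpha)=1$ depends only on $(\alpha,\gamma) \bmod d$ with $d = k\wedge l$, so the count over the $k \times l$ box reduces to $\frac{kl}{d^2}$ times a count over a $d\times d$ set of residues. From there the paper proceeds geometrically: using $p\wedge q = 1$ it builds a matrix $A \in SL_2(\ZZ)$ with first row $(-q,\,p)$, converts the problem into counting points with first coordinate coprime to $d$ in the image parallelogram $A(K)$, and shows this count is invariant under the shear generators $T_1, T_2$ of $SL_2(\ZZ)$ by a cut-and-translate argument on half-open triangles, concluding the count is $d\phi(d)$. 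You instead compute the modular count $N$ directly by M\"obius inversion, reducing to counting solutions of the linear congruence $pc \equiv qa \pmod{e}$ for each $e \mid d$; coprimality of $p$ and $q$ guarantees that modulo each prime power dividing $e$ one of $p,q$ is a unit, so one variable is determined by the other, and CRT gives exactly $e$ solutions, whence $N = d^2\sum_{e\mid d}\mu(e)/e = d\phi(d)$. Your route is purely arithmetic and more self-contained: it avoids the geometric bookkeeping with half-open regions, the appeal to $T_1,T_2$ generating $SL_2(\ZZ)$, and the implicit extension of the invariance argument to images of parallelograms. The paper's route, in exchange, makes the unimodular change of variables explicit and reuses its Lemma on counting integers coprime to $d$ in length-$d$ intervals, which fits the geometric flavor of the surrounding parameter counts. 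Both yield $kl\cdot\phi(k\wedge l)/(k\wedge l)$.
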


\begin{proof}
Let $d := k \wedge l$. Let $(\alpha, \gamma)$ satisfy the condition that
$ d \wedge (p \gamma - q \alpha) = 1$

Assume now that you have $\alpha', \gamma' \in \ZZ$ such that $d | (\alpha' - \alpha)$ and $d| (\gamma' - \gamma)$. Then,
$$ d \wedge (p \gamma' - q \alpha') = d \wedge (p (\gamma'-\gamma) - q (\alpha' -\alpha) + p \gamma - q \alpha) = d \wedge (p \gamma - q \alpha) = 1$$
So, to count the number of solutions $(\alpha, \gamma)$ in the box $B:= [ \beta_1, k+\beta_1) \times [\beta_2, l+\beta_2)$, it suffices to count the number of solutions in the size $d \times d$ smaller box $K:=[\beta_1, \beta_1+d) \times [\beta_2, \beta_2+d)$ then multiply the number of such solutions by $\frac{kl}{d^2}$, the number of copies of $K$ required to tile $B$, to obtain the total number of solutions in $B$. 

Now, we find the number of distinct pairs $(\alpha, \gamma) \in K$ such that $d \wedge (p \gamma - q \alpha) =1$. 

Since $p \wedge q = 1$, there exists $a, b \in \ZZ$ such that $A := \begin{bmatrix}-q & p \\ a & b\end{bmatrix} \in SL_2(\ZZ)$. Call a point $(x,y) \in \ZZ^2$ $d$-prime if $x \wedge d = 1$. Then, counting $(\alpha, \gamma) \in K$ such that $d \wedge (p \gamma - q\alpha) = 1$ is equivalent to counting $d$-prime points in $A(K)$, since
$$ A \cdot \begin{bmatrix} \alpha \\ \gamma  \end{bmatrix} =  \begin{bmatrix}-q & p \\ a & b\end{bmatrix}  \begin{bmatrix} \alpha \\\gamma  \end{bmatrix} =  \begin{bmatrix} p \gamma - q\alpha \\ a \alpha + b \gamma \end{bmatrix} \in A(K)$$

By Lemma \ref{lem:dprimeintegers}, we know there are $\phi(d)$ integers in $[\beta_1, \beta_1+d)$ that are relatively prime to $d$. Hence, there are $d\phi(d)$ $d$-prime points in $K$. 

Next, we show that the number of $d$-prime pairs in $K$ and $A(K)$ is the same.

Let $T_1 = \begin{bmatrix} 1 & 1 \\ 0 & 1 \end{bmatrix}$ and $T_2  \begin{bmatrix} 1 & 0 \\ 1 & 1 \end{bmatrix}$. 
We will argue that $T_1(K)$ and $T_2(K)$ both have $d\phi(d)$ $d$-prime pairs. 

$T_1(K)$ is the half open parallelogram with vertices $(\beta_1+\beta_2, \beta_2)$, $(\beta_1+\beta_2 + d, \beta_2)$, $(\beta_1 + \beta_2 + 2d, \beta_2+d)$, $(\beta_1+\beta_2 + d, \beta_2+d)$ as shown in Figure \ref{zmiakoulempic}.

\begin{figure}[!ht!]
\centering
\includegraphics[scale=0.07]{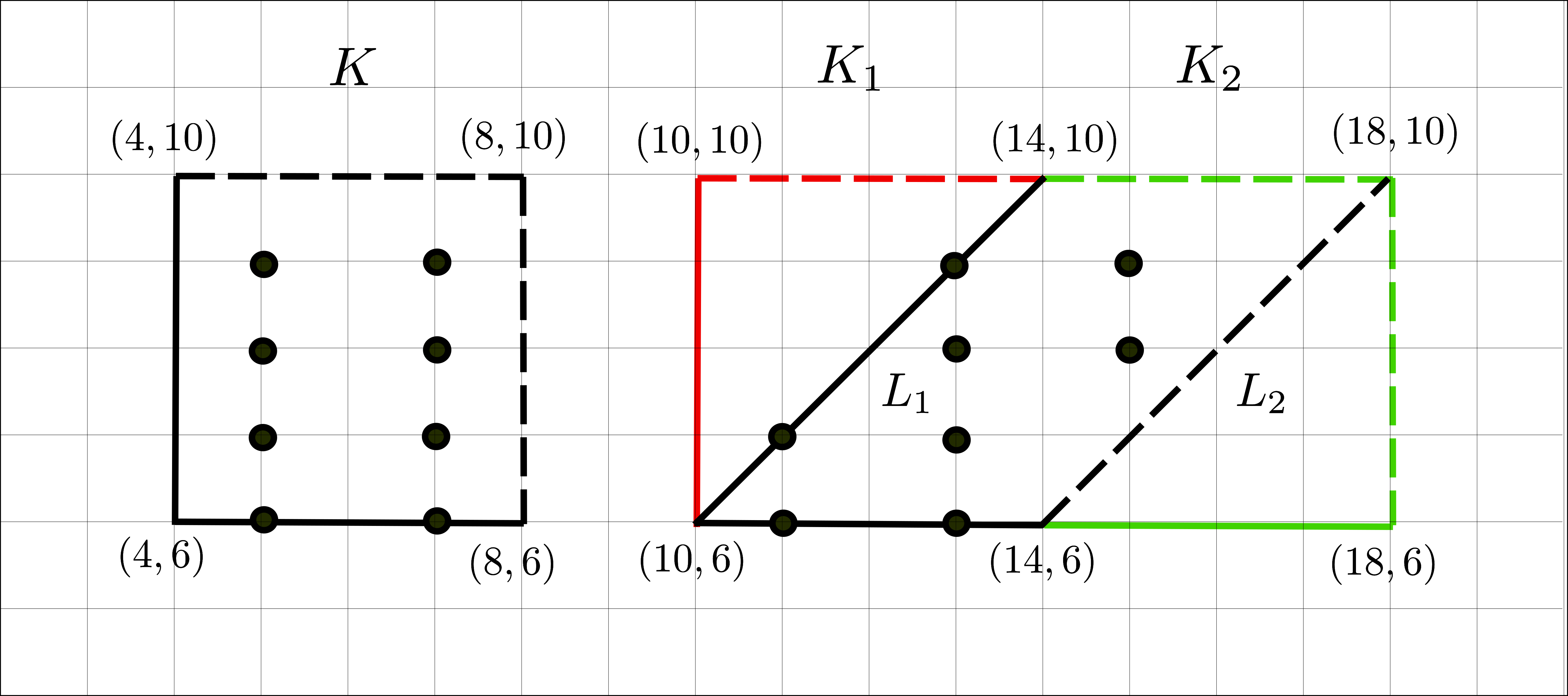}
\caption{Example case when $K$ is a $4 \times 4$ box based at $(\beta_1, \beta_2) = (4,6)$ and $d = 4$.}
\label{zmiakoulempic}
\end{figure}

First consider $K_1 := [ \beta_1+\beta_2, \beta_1+\beta_2 +d) \times [\beta_2, \beta_2+d)$. By Lemma \ref{lem:dprimeintegers}, we know that $[\beta_1+\beta_2, \beta_1+\beta_2 +d)$ has $\phi(d)$ $d$-prime integers so that $K_1$ has $d \phi(d)$ $d$-prime points. Similarly, $K_2 := [\beta_1+\beta_2+d, \beta_1+\beta_2+2d) \times [\beta_2, \beta_2+d)$ also has $d\phi(d)$ $d$-prime points. In fact, we note that the $d$-prime points of $K_2$ are exactly the $d$-prime points of $K_1$ translated by a horizontal $d$ displacement. 

Now, let $L_1$ be the half open triangle bounded by points $(\beta_1+\beta_2, \beta_2)$, $(\beta_1+\beta_2 + d, \beta_2)$, $(\beta_1+\beta_2 +d, \beta_2+d)$ where the vertical side is not included. Similarly, let $L_2$, be the half open triangle bounded by points $(\beta_1+\beta_2+d, \beta_2)$, $(\beta_1+\beta_2 + 2d, \beta_2)$, $(\beta_1+\beta_2 +2d, \beta_2+d)$ where the vertical side is not included. Note again that $L_1$ has the same number of $d$-prime points as $L_2$, since the $d$-prime points of $L_2$ can be obtained by just horizontally translating the $d$-prime points of $L_1$ by $d$.  

So, far we have,
\begin{gather*}
 \# d\text{-prime points in } K_1 = \# d\text{-prime points in } K_2\\
 \# d\text{-prime points in } L_1 = \# d\text{-prime points in } L_2
\end{gather*}

Hence, 
$$  \# d\text{-prime points in } K_1 - L_1 = \# d\text{-prime points in } K_2 - L_2$$

But note that $(K_2 - L_2)\cup  L_1 = T_1(K)$. Hence, $T_1(K)$ also has $d \phi(d)$ $d$-prime points. 

A similar geometric argument yields the fact that $T_2(K)$ has $d\phi(d)$ $d$-prime points. 

However, note that $T_1$ and $T_2$ generate $SL_2(\ZZ)$. So, in fact, $M(K)$ for any $M \in SL_2(\ZZ)$ has $d\phi(d)$ $d$-prime points and in particular, $A(K)$ has $d\phi(d)$ $d$-prime points. 
Equivalently, there exists $d\phi(d)$ distinct integer $(\alpha, \gamma)$ points in $K$ such that $$ k \wedge l \wedge (p \gamma - q \alpha) = 1$$
Hence, there are $\frac{kl}{d^2}d\phi(d) = kl \cdot\frac{\phi(k\wedge l)}{k \wedge l}$ such points in $B$. \end{proof}

\begin{notn}Since the function $\frac{\phi(m)}{m}$ appears frequently in our computations, we define $\phi'(m):= \frac{\phi(m)}{m}$
\end{notn}

\subsection{Enumeration for Cylinder Diagram A}\label{typeI}
In this section we count the number of primitive $n$-square surfaces in $\calH(1,1)$ with cylinder diagram A. 

\begin{proposition}\label{prop:typeIcount}
The number of primitive $n$-square surfaces in $\calH(1,1)$ with cylinder diagram A is given by
$$A(n) = \frac{1}{2}nJ_1(n) + \left(\frac{1}{24}n^2 - \frac{1}{4}n\right)J_2(n)$$
\end{proposition}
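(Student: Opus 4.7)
The plan is to apply the reparametrization in Lemma~\ref{prop:typeIreparam} and evaluate $|\Omega|$ via Möbius inversion on the gcd condition $(z-x)\wedge(t-y)\wedge n = 1$. Expanding the indicator as a divisor sum of $\mu$ and swapping the order of summation gives $A(n) = \sum_{d\mid n} \mu(d)\,T(d)$, where $T(d)$ counts quadruples $1\leq x<y<z<t\leq n$ with $d\mid z-x$ and $d\mid t-y$. The problem therefore reduces to finding a closed form for $T(d)$ and then recognizing the resulting Möbius sum as a combination of $J_1(n)$ and $J_2(n)$.

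To compute $T(d)$, I fix $d\mid n$, let $m := n/d$, and partition on the residue pair $(i,j)\in\{1,\dots,d\}^2$ for which $x\equiv z\equiv i$ and $y\equiv t\equiv j\pmod d$. Each residue class contains exactly $m$ elements of $[1,n]$, so the diagonal case $i=j$ contributes $d\binom{m}{4}$ quadruples. When $i\neq j$, the two classes together interleave into a single arithmetic progression, and the sign of $i-j$ determines which class heads each consecutive pair of terms. Translating $x<y<z<t$ into inequalities on the indices of the four chosen elements yields a short case-by-case count, which assembles to
$T(d) = d\binom{m}{4} + \binom{d}{2}\bigl[\binom{m}{2}+3\binom{m}{3}+2\binom{m}{4}\bigr]$.
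After an elementary binomial simplification this collapses to $\frac{n^4}{24 d^2} - \frac{n^3}{4 d^2} + \frac{n^2}{2d} - \frac{n^2}{24} - \frac{n}{4}$.

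Substituting back and regrouping by the power of $d$, the three non-constant pieces pick up the identities $\sum_{d\mid n} \mu(d)/d^2 = J_2(n)/n^2$ and $\sum_{d\mid n} \mu(d)/d = J_1(n)/n$, while the constant piece vanishes by $\sum_{d\mid n} \mu(d)=0$ for $n>1$. Combining these produces exactly $\tfrac{1}{2}nJ_1(n) + \bigl(\tfrac{n^2}{24}-\tfrac{n}{4}\bigr)J_2(n)$, as claimed.

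The main obstacle is the combinatorial count in the off-diagonal case: the two subcases $i<j$ and $i>j$ contribute unequally, $\binom{m}{2}+2\binom{m}{3}+\binom{m}{4}$ versus $\binom{m}{3}+\binom{m}{4}$, so some care is needed to verify the correspondence between the alternating-progression picture and the ordering on $(x,y,z,t)$. Once that combinatorial identity is established, the remainder is routine polynomial algebra followed by the standard Dirichlet-series identities for the Jordan totients.
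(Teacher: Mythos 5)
Your proposal is correct and follows essentially the same route as the paper: reparametrize via Lemma~\ref{prop:typeIreparam}, Möbius-invert the gcd condition, count the quadruples with $d\mid z-x$ and $d\mid t-y$ (your residue-pair bookkeeping, giving $d\binom{m}{4}+\binom{d}{2}\bigl[\binom{m}{2}+3\binom{m}{3}+2\binom{m}{4}\bigr]$, is just a reorganization of the case analysis in Lemma~\ref{dquadruple} and agrees with its closed form $\binom{d}{2}\binom{m}{2}+3\binom{d}{2}\binom{m}{3}+d^{2}\binom{m}{4}$), and finish with $\sum_{d\mid n}\mu(d)d^{-k}=J_k(n)/n^{k}$ and $\sum_{d\mid n}\mu(d)=0$. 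The intermediate polynomial and the final combination of $J_1$ and $J_2$ match the paper's computation exactly.
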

Before we prove this, we need the following Lemma which counts the number of positive integer quadruples under certain number theoretic conditions.

\begin{lemma}\label{dquadruple}
Let $n > 3$. For $d|n$, the number of quadruples $(x, y, z, t) \in \NN^4$ such that
$$ x < y < z < t \in [1,n] \hspace{1cm} \text{ and } d| (z-x) \text{ and } d|(t-y)$$
is given by 
$$ \binom{d}{2}\binom{n/d}{2} + 3 \binom{d}{2}\binom{n/d}{3} + d^2 \binom{n/d}{4}$$
\end{lemma}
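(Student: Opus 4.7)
\medskip

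\noindent\textbf{Proof plan.} Write $N := n/d$ and parametrize each integer in $[1,n]$ uniquely as $a + (i-1)d$ with $a \in \{1,\ldots,d\}$ and $i \in \{1,\ldots,N\}$, so $a$ records the residue class mod $d$ and $i$ the position within the class. The conditions $d\mid (z-x)$ and $d\mid(t-y)$ say precisely that $x,z$ lie in a common residue class (call it $a$) and $y,t$ lie in a common class (call it $b$). Thus I write
$$x=a+(i-1)d,\quad z=a+(k-1)d,\quad y=b+(j-1)d,\quad t=b+(l-1)d,$$
with $i,j,k,l\in\{1,\ldots,N\}$, and split the count on whether $a=b$, $a<b$, or $a>b$.

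\medskip

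\noindent The first step is to translate the chain $x<y<z<t$ into conditions on $(i,j,k,l)$. The key elementary observation is that for any $a,b\in\{1,\ldots,d\}$ and $i,j\in\{1,\ldots,N\}$, the inequality $a+(i-1)d < b+(j-1)d$ is $(j-i)d>a-b$, and since $|a-b|\le d-1$ this collapses to: $j\ge i$ if $a<b$, $j>i$ if $a>b$, and $j>i$ if $a=b$. Applying this to all three inequalities $x<y$, $y<z$, $z<t$ yields:
\begin{itemize}
\item if $a=b$: condition is $i<j<k<l$;
\item if $a<b$: condition is $i\le j<k\le l$;
\item if $a>b$: condition is $i<j\le k<l$.
\end{itemize}

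\medskip

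\noindent The second step is to count each type. The $a=b$ contribution is $d\binom{N}{4}$. For $a<b$, breaking $i\le j<k\le l$ by which of the two weak inequalities are equalities gives $\binom{N}{4}+2\binom{N}{3}+\binom{N}{2}$ quadruples (all distinct; $i=j$ only; $k=l$ only; both equalities). For $a>b$, the chain $i<j\le k<l$ only admits the equality $j=k$ or none, giving $\binom{N}{4}+\binom{N}{3}$. There are $\binom{d}{2}$ ordered choices of $(a,b)$ in each of the two subcases, so the total is
$$d\binom{N}{4} + \binom{d}{2}\Bigl[\binom{N}{4}+2\binom{N}{3}+\binom{N}{2}\Bigr] + \binom{d}{2}\Bigl[\binom{N}{4}+\binom{N}{3}\Bigr].$$
The third step is routine algebra: the coefficient of $\binom{N}{4}$ collects as $d+2\binom{d}{2}=d^{2}$, the coefficient of $\binom{N}{3}$ as $3\binom{d}{2}$, and the coefficient of $\binom{N}{2}$ as $\binom{d}{2}$, giving exactly the stated expression.

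\medskip

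\noindent The only point requiring care, and therefore the main obstacle, is the asymmetry between the $a<b$ and $a>b$ subcases: because the residues $a,b$ sit in $\{1,\ldots,d\}$ with $|a-b|\le d-1<d$, the sign of $a-b$ converts some of the strict inequalities among $x,y,z,t$ into weak inequalities among the position indices and vice versa, and the combinatorics of $i\le j<k\le l$ versus $i<j\le k<l$ differ. Once this asymmetry is tracked, the remaining work is bookkeeping.
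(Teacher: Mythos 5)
Your proof is correct and is essentially the paper's argument: both decompose each of $x,y,z,t$ as (residue mod $d$) plus a multiple of $d$, use the divisibility to force $x\equiv z$ and $y\equiv t \pmod d$, and then case-count; your three residue cases ($a=b$, $a<b$, $a>b$) with their index-equality subcases regroup exactly the paper's five quotient-pattern cases and recombine to the same coefficients $d^2$, $3\binom{d}{2}$, $\binom{d}{2}$. No substantive difference.
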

\begin{proof}
First we write 
$$x = \alpha_1d + r_1 \hspace{0.5cm} y = \alpha_2d + r_2  \hspace{0.5cm} z = \alpha_3d + r_3  \hspace{0.5cm} t = \alpha_4d + r_4$$

Where $\alpha_i \in \{0, \dots, \frac{n}{d} -1\}$ and $r_i \in \{1, \dots, d\}$. Since $d | z-x$ and $d| t-y$, we get that $r_1 = r_3$ and $r_2 = r_4$.  Then as $x < y < z < t \in [1,n]$, there can be 5 different cases:
\begin{itemize}
\item  $\alpha_1 = \alpha_2 < \alpha_3 = \alpha_4$ and $r_1 = r_3 < r_2 = r_4$. In this case, $\binom{n/d}{2}$ choices of the $\alpha_i$ and $\binom{d}{2}$ choices for the $r_i$. Hence, we have $\binom{n/d}{2}\cdot \binom{d}{2}$ different choices for this case.
\item $\alpha_1 = \alpha_2 < \alpha_3 < \alpha_4$ and $r_1 = r_3 < r_2 = r_4$. In this case, $\binom{n/d}{3}$ choices of the $\alpha_i$ and $\binom{d}{2}$ choices for the $r_i$. Hence, we have $\binom{n/d}{3}\cdot \binom{d}{2}$ different choices for this case.
\item $\alpha_1 < \alpha_2 = \alpha_3 < \alpha_4$ and $r_1 = r_3 > r_2 = r_4$. In this case $\binom{n/d}{3}$ choice for $\alpha_i$ and $\binom{d}{2}$ choices for the $r_i$. Hence, we have $\binom{n/d}{3}\cdot \binom{d}{2}$ different choices for this case.
\item $\alpha_1 < \alpha_2 < \alpha_3 = \alpha_4$ and $r_1 = r_3 <  r_2 = r_4$. In this case, $\binom{n/d}{3}$ choices of the $\alpha_i$ and $\binom{d}{2}$ choices for the $r_i$. Hence, we have $\binom{n/d}{3}\cdot \binom{d}{2}$ different choices for this case.
\item $\alpha_1 < \alpha_2 < \alpha_3 < \alpha_4$ and $r_1 = r_3$, $r_2 = r_4$. In this case, $\binom{n/d}{4}$ choices of the $\alpha_i$ and $d^2$ choices for the $r_i$. Hence, we have $\binom{n/d}{4}\cdot d^2$ different choices for this case.
So in total we have 
$$ \binom{d}{2}\binom{n/d}{2} + 3 \binom{d}{2}\binom{n/d}{3} + d^2 \binom{n/d}{4}$$
choices for $x,y, z, t$ with the given restrictions.
\end{itemize}\end{proof}

We are now ready to give the proof of Proposition \ref{prop:typeIcount}.

\begin{proof}[Proof of Proposition \ref{prop:typeIcount}]

By Lemma $\ref{prop:typeIreparam}$, to count the number of primitive $n$-square surfaces in $\calH(1,1)$ with cylinder diagram A, it suffices to count integers,
$$x < y < z < t \in [1, n]\hspace{1cm} \text{ such that }\hspace{1cm} (z-x) \wedge (t-y) \wedge n = 1$$
To enumerate the number of ways we can pick these numbers, we start with the ${n \choose 4}$ quadruples $(x,y,z,t) \in \NN^4$ such that $x < y<z<t$. We want to delete from this number all the quadruples such that $z-x$ and $t-y$ share factors with $n$. So, we begin by subtracting the quadruples such that $z-x$ and $t-y$ are simultaneously divisible by a prime divisor of $n$. We then add the number of quadruples such that $z-x$ and $t-y$ are simultaneously divisible by two distinct primes divisors of $n$. Then we subtract the number of quadruples such that $z-x$ and $t-y$ are simultaneously divisible by three distinct prime divisors of $n$ and so on as per the inclusion exclusion principle.

\begin{align*}
&A(n)\\
&=\#\{ (x,y,z,t) \in \NN^4| x < y< z < t \in [1,n], n \wedge (z-x) \wedge (t-y) = 1\} \\
&=  {n \choose 4} -\sum_{p|n, p \text{ prime }} \# \{ (x,y,z,t) \in \NN^4| x < y< z < t \in [1,n], n \wedge (z-x) \wedge (t-y) = 1\} \\
& + \sum_{p_1|n, p_2|n,  p_1 \neq p_2 \text{ primes }}\# \{ (x,y,z,t) \in \NN^4| x < y< z < t \in [1,n], p_1, p_2 \text{ divide } (z-x) \&  (t-y) \} \\
& \vdots\\
&= \sum_{d|n} \mu(d) \# \{(x,y,z,t) \in \NN^4| x < y < z < t \in [1,n], d \text{ divides } (z-x) \& (t-y) \}
\end{align*}
Then by Lemma \ref{dquadruple} this sum is equal to 
\begin{align*}
&\sum_{d|n}\mu(d)\left({d \choose 2}{n/d \choose 2} + 3{d \choose 2}{n/d \choose 3} + d^2 {n/d \choose 4}\right)\\
=& \sum_{d|n}\mu(d)\left(\frac{d(d-1)}{2}\left( \frac{\frac{n}{d}(\frac{n}{d}-1)}{2} + 3 \frac{\frac{n}{d}(\frac{n}{d}-1)(\frac{n}{d}-2)}{6}\right)+d^2\left(\frac{\frac{n}{d}(\frac{n}{d}-1)(\frac{n}{d}-2)(\frac{n}{d}-3)}{24}\right)\right)\\
=& \sum_{d|n} \mu(d) \left(  \left(-\frac{1}{24}n^2-\frac{1}{4}n\right) + \left(\frac{1}{2}n^2\right)\frac{1}{d} + \left(\frac{1}{24}n^4 - \frac{1}{4}n^3\right)\frac{1}{d^2}\right) \\
=&  \left(-\frac{1}{24}n^2-\frac{1}{4}n\right) \sum_{d|n} \mu(d) + \left(\frac{1}{2}n^2\right) \sum_{d|n} \mu(d)\frac{1}{d} + \left(\frac{1}{24}n^4 - \frac{1}{4}n^3\right)\sum_{d|n} \mu(d)\frac{1}{d^2}
\end{align*}

Using Propositions \ref{prop:mobiussum} and \ref{prop:mobiusproduct}, the sum simplifies to,
$$\left(\frac{1}{2}n\right)n \prod_{p|n}\left(1 - p^{-1}\right) + \left(\frac{1}{24}n^2 - \frac{1}{4}n\right)n^2  \prod_{p|n}\left(1 - p^{-2}\right) = \frac{1}{2}nJ_1(n) + \left(\frac{1}{24}n^2 - \frac{1}{4}n\right)J_2(n)$$

\end{proof}

\subsection{Enumeration for Cylinder Diagram B}
In this section we count the number of primitive $n$-square surfaces in $\calH(1,1)$ with cylinder diagram B. 

\begin{proposition}\label{typeIIcount}
The number of primitive $n$-square surfaces in $\calH(1,1)$ with cylinder diagram B is given by,
$$B(n)= \bigl((\mu \cdot \sigma_2) * (\sigma_1 \Delta \sigma_2)\bigr)(n) - \left(\frac{1}{12}n^2 + \frac{5}{24}n - \frac{3}{4}\right)J_2(n) - \frac{1}{2}nJ_1(n)$$
\end{proposition}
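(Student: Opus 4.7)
The plan is to combine the parametrization from Proposition~\ref{prop:typeIIparam} with the primitivity criterion Lemma~\ref{lem:allprimitivity}(2). For each $(p,q,k,l,m) \in \NN^5$ with $p(k+l+m) + qm = n$ and $p \wedge q = 1$, I need to count pairs $(\alpha,\beta) \in [0,k+l+m) \times [0,m)$ satisfying $(k+l) \wedge m \wedge (p\beta - q\alpha + (p+q)l) = 1$.

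To apply Lemma~\ref{zmiakoulem} to this inner count, two observations are key. First, by the Euclidean algorithm $(k+l) \wedge m = (k+l+m) \wedge m$, so the gcd in the condition matches the gcd of the box side lengths. Second, the additive shift $(p+q)l$ can be absorbed: the substitution $\alpha = \alpha' - l$, $\beta = \beta' + l$ sends $p\beta - q\alpha + (p+q)l$ to $p\beta' - q\alpha'$ while moving the ranges to $\alpha' \in [-l, k+m)$ and $\beta' \in [l, m+l)$, two intervals of lengths $k+l+m$ and $m$. Lemma~\ref{zmiakoulem} with $\beta_1 = -l$, $\beta_2 = l$ then delivers an inner count of $(k+l+m)\cdot m\cdot \phi'((k+l) \wedge m)$.

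Reindexing with $a := k+l+m$ and $b := m$, the $(k,l)$ choices with $k,l \geq 1$ and $k+l = a-b$ contribute a multiplicity of $a-b-1$, so
\[ B(n) = \sum_{\substack{p,q,a,b \geq 1,\ p\wedge q = 1 \\ pa + qb = n,\ a > b}} (a-b-1)\, ab\, \phi'(a \wedge b). \]
To unwrap this into the claimed formula, I would expand $(a-b-1) = (a-b) - 1$, remove the coprimality by M\"obius inversion ($p = eP$, $q = eQ$ forcing $e \mid n$), and expand $\phi'(a \wedge b) = \sum_{c \mid a \wedge b} \mu(c)/c$. The substitution $u = pa$, $v = qb$ (so $u+v = n/e$ with $a \mid u$, $b \mid v$) exposes an additive convolution in $(u,v)$; summing over $a \mid u$, $b \mid v$ and then over $c$ regroups the main piece as the Dirichlet convolution $\bigl((\mu \cdot \sigma_2) * (\sigma_1 \Delta \sigma_2)\bigr)(n)$, with residual sums that reduce, via the Jordan totient and M\"obius identities of Appendix~\ref{sec:arithmetic}, to the stated $J_1$ and $J_2$ terms.

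The main obstacle will be the bookkeeping around the asymmetric constraint $a > b$: extending the sum to unrestricted $(a,b)$ is what produces the clean convolution, and the diagonal $a = b$ and the lower half $a < b$ must be subtracted back in exactly the right form to recover the correction polynomial $-\bigl(\tfrac{1}{12}n^2 + \tfrac{5}{24}n - \tfrac{3}{4}\bigr)J_2(n) - \tfrac{1}{2}nJ_1(n)$. These simplifications are routine but lengthy, and would be deferred to Appendix~\ref{sec:intermediatesums} in keeping with the paper's organization.
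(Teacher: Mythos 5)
Your reduction is exactly the paper's: the same use of Proposition~\ref{prop:typeIIparam} with Lemma~\ref{lem:allprimitivity}(2), the same shift absorbing $(p+q)l$ so that Lemma~\ref{zmiakoulem} applies (note your substitution is written with the direction reversed --- it should be $\alpha'=\alpha-l$, $\beta'=\beta+l$, which is what your stated ranges and conclusion actually use), and the same intermediate identity $B(n)=\sum_{pa+qb=n,\ a>b,\ p\wedge q=1}(a-b-1)\,ab\,\phi'(a\wedge b)$, which is precisely where the paper stands before invoking Lemmas~\ref{lem:X} and~\ref{lem:Y}. For the ``$-1$'' piece your plan (remove coprimality by M\"obius inversion, extend to unrestricted $(a,b)$, subtract the diagonal) is exactly Lemma~\ref{lem:Y}, and it works because the weight $ab\,\phi'(a\wedge b)$ is symmetric under $(p,a)\leftrightarrow(q,b)$.

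The gap is in the $(a-b)$ piece. That weight, $(a-b)ab\,\phi'(a\wedge b)$, is antisymmetric under $(p,a)\leftrightarrow(q,b)$, so ``extending the sum to unrestricted $(a,b)$'' yields $0$ identically, and ``subtracting back the lower half'' presupposes knowledge of a half-sum with asymmetric weight --- exactly the kind of quantity you are trying to compute --- so the symmetrization you describe cannot close the argument; this is not routine bookkeeping. The paper's device (Lemma~\ref{lem:X}) is to substitute $a=b+l$ with $l\geq 1$: the inequality $a>b$ becomes automatic, the constraint $pa+qb=n$ becomes $pl+q'b=n$ with $q'=p+q>p$ and $p\wedge q'=p\wedge q=1$, so the asymmetry migrates onto the coprime coefficient pair, while the new weight $(l+b)lb\,\phi'(l\wedge b)$ is symmetric in the length variables; symmetrizing in $(p,q')$ then succeeds, and the main term arises from $\sum_{pk+ql=n}kl^2=(\sigma_1\Delta\sigma_2)(n)$, producing $\bigl((\mu\cdot\sigma_2)*(\sigma_1\Delta\sigma_2)\bigr)(n)-\tfrac{1}{12}n^2J_2(n)$. (An alternative would be to evaluate the asymmetric half-sum directly via a Dirichlet-series identity, as the paper is forced to do for diagram C in Lemma~\ref{lem:U}, but that is an additional nontrivial step.) In short: your treatment of the shear count and the reduction to the five-fold sum are correct and coincide with the paper's; what is missing is the specific mechanism that evaluates the antisymmetric half of that sum.
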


We first count the contribution of the shear parameters to our count, in the following lemma:
\begin{lemma}\label{typeIIalphabeta}
 Let $p,q,k,l,m \in \NN$ and $p \wedge q = 1$. The number of $(\alpha, \beta) \in \ZZ^2$ such that 
$$ 0 \leq \alpha< k+l+m, 0 \leq \beta < m,  \hspace{1cm}\text{ and } \hspace{1cm} (k+l+m) \wedge m \wedge (p\beta - q\alpha + (p+q)l) =1$$
is given by,
$$ (k+l+m)m \cdot \phi'((k+l) \wedge m)$$ 
\end{lemma}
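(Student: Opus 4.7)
The plan is to reduce this lemma to Lemma \ref{zmiakoulem} (the generalization of Zmiaikou's Lemma 8 just proved) by a simple change of variables that absorbs the extra additive term $(p+q)l$ inside the gcd condition.

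First I would observe that $(k+l+m) \wedge m = (k+l) \wedge m$, so the gcd condition in the statement can equivalently be written as
$$(k+l+m) \wedge m \wedge (p\beta - q\alpha + (p+q)l) = 1,$$
with the two dimensions of the ``box'' now being $K := k+l+m$ and $L := m$. Next, to match the shape of Lemma \ref{zmiakoulem}, I would substitute $\alpha' := \alpha - l$ and $\beta' := \beta + l$. This choice is designed precisely so that
$$p\beta' - q\alpha' = p(\beta + l) - q(\alpha - l) = p\beta - q\alpha + (p+q)l,$$
which eliminates the additive $(p+q)l$ term. Under this substitution the region $\alpha \in [0, k+l+m),\ \beta \in [0, m)$ becomes
$$\alpha' \in [-l,\ k+m), \qquad \beta' \in [l,\ l+m),$$
which is a rectangular box of side lengths $K = k+l+m$ and $L = m$ (with shifted basepoints $\beta_1 = -l$, $\beta_2 = l$ in the notation of Lemma \ref{zmiakoulem}).

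Finally I would apply Lemma \ref{zmiakoulem} directly with parameters $k \mapsto k+l+m$, $l \mapsto m$, $\beta_1 \mapsto -l$, $\beta_2 \mapsto l$, and with the same $p, q$ (the hypothesis $p \wedge q = 1$ is retained). The lemma counts exactly the $(\alpha', \beta')$ in such a shifted box satisfying $K \wedge L \wedge (p\beta' - q\alpha') = 1$, and gives the count
$$KL \cdot \phi'(K \wedge L) = (k+l+m)\, m \cdot \phi'\!\big((k+l) \wedge m\big),$$
which is precisely the claimed formula. Since there is no real obstacle here beyond verifying that the change of variables is a bijection between the two solution sets, the proof is essentially a one-line reduction; the only thing to be careful about is correctly tracking the shifts of the intervals and confirming the gcd simplification $(k+l+m) \wedge m = (k+l) \wedge m$.
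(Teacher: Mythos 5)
Your proposal is correct and matches the paper's own argument: the paper proves this lemma by exactly the same substitution $\alpha' = \alpha - l$, $\beta' = \beta + l$, followed by an application of Lemma \ref{zmiakoulem} with $\beta_1 = -l$, $\beta_2 = l$ and box dimensions $k+l+m$ and $m$. Your extra remark that $(k+l+m) \wedge m = (k+l) \wedge m$ is a welcome clarification, since the paper states its conclusion with $\phi'((k+l+m)\wedge m)$ and leaves that identification implicit.
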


\begin{proof}

Rewrite $p\beta - q\alpha + (p+q)l = p(\beta +l) -q(\alpha -l)$. Set $\beta' = \beta + l$ and $\alpha' = \alpha - l$. Then, we want to find $(\alpha', \beta') \in \ZZ^2$ such that
$$ -l \leq \alpha'< (k+l+m)-l, l \leq \beta' < m+l,  \hspace{1cm}\text{ and } \hspace{1cm} (k+l+m) \wedge m \wedge (p\beta' - q\alpha') =1$$

Applying now, Lemma \ref{zmiakoulem} with $\beta_1 = -l$ and $\beta_2 = l$, we get that the number of required $(\alpha, \beta)$ is 
$(k+l+m)m \cdot \phi'((k+l+m) \wedge m)$\end{proof}

We are now ready the prove Proposition \ref{typeIIcount}
\begin{proof}[Proof of Proposition \ref{typeIIcount}]
First note that we need to count the parameters stated in Proposition \ref{prop:typeIIparam} under the conditions stated in Lemma \ref{lem:allprimitivity}. Since Lemma \ref{typeIIalphabeta} gives the contribution of the shear parameters, we must evaluate the sum
$$\sum_{\substack{p, q, k, l, m \in \NN  \\ p(k+l+m) + qm = n \\ p \wedge q = 1}} (k+l +m)m \cdot \phi'((k+l+m) \wedge m) = \sum_{\substack{p, q, k, l, m \in \NN  \\ k-m > l \\ pk + qm = n \\ p \wedge q = 1}} km\cdot \phi'(k\wedge m) $$ 
after a reparametrization.
Then,
\begin{align*}
B(n) &= \sum_{\substack{p, q, k, l, m \in \NN  \\ k-m > l \\ pk + qm = n \\ p \wedge q = 1}} km\cdot \phi'(k\wedge m)
=\sum_{\substack{p, q, k, l, m \in \NN  \\ k-m = l \\ pk + qm = n \\ p \wedge q = 1}} km\cdot \phi'(k\wedge m)(k-m-1)\\
&= \sum_{\substack{p, q, l, m \in \NN  \\ k-m=l\\pk + qm = n \\ p \wedge q = 1}} km(k-m) \cdot \phi'(k\wedge m) - \sum_{\substack{p, q, k, l, m \in \NN  \\ k-m = l \\ pk + qm = n \\ p \wedge q = 1}} km\cdot \phi'(k\wedge m)\\
&=  \sum_{\substack{p, q, l, m \in \NN  \\ q > p \\pl + qm = n \\ p \wedge q = 1}} (l+m)ml \cdot \phi'(l\wedge m) - \sum_{\substack{p, q, k, m \in \NN  \\ k > m \\ pk + qm = n \\ p \wedge q = 1}} km\cdot \phi'(k\wedge m)
\end{align*}
Define
$$B_1(n) :=  \sum_{\substack{p, q, l, m \in \NN  \\ q > p \\pl + qm = n \\ p \wedge q = 1}} (l+m)ml \cdot \phi'(l\wedge m)  \hspace{1cm}\text{ and }\hspace{1cm}B_2(n):= \sum_{\substack{p, q, k, m \in \NN  \\ k > m \\ pk + qm = n \\ p \wedge q = 1}} km\cdot \phi'(k\wedge m)$$

$B_1$ is simplified in Lemma \ref{lem:X} and $B_2$ is simplified in Lemma \ref{lem:Y}.

Finally, putting them together, we get, 
\begin{align*}
(B_1 - B_2)(n) &= \bigl(( \mu \cdot \sigma_2 )*( \sigma_1 \Delta \sigma_2)\bigr)(n) - \frac{1}{12}n^2 J_2(n) - \frac{5}{24} nJ_2(n) - \frac{1}{2} n J_1(n) +\frac{3}{4}J_2(n)\\
&= \bigl((\mu \cdot \sigma_2) *( \sigma_1 \Delta \sigma_2)\bigr)(n) - \left(\frac{1}{12}n^2 + \frac{5}{24}n - \frac{3}{4}\right)J_2(n) - \frac{1}{2}nJ_1(n)
\end{align*}
\end{proof}

\subsection{Enumeration for Cylinder Diagram C}
In this section we count the number of primitive $n$-square surfaces in $\calH(1,1)$ with cylinder diagram C.

\begin{proposition}\label{prop:typeIIIcount}
 The number of primitive $n$-square surfaces in $\calH(1,1)$ with cylinder diagram C is given by,
$$ C(n)= \frac{1}{24}(n-2)(n-3) J_2(n) $$

\end{proposition}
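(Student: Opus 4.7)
The plan is to use the uniqueness parametrization from Proposition \ref{prop:typeIIIparam} together with the primitivity criterion from Lemma \ref{lem:allprimitivity}(3), exploiting the natural $\ZZ/2$ symmetry of cylinder diagram C that swaps its two cylinders.

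Let $X$ denote the set of all tuples $(p,q,k,l,m,\alpha,\beta)\in\NN^5\times\ZZ^2$ satisfying $p(k+l)+q(l+m)=n$, $0\le\alpha<k+l$, $0\le\beta<l+m$, $p\wedge q=1$, and $(k+l)\wedge(l+m)\wedge(p\beta-q\alpha)=1$; in other words, $X$ is the set of tuples satisfying everything except the uniqueness conditions. The involution
$$\iota:(p,q,k,l,m,\alpha,\beta)\longmapsto(q,p,m,l,k,\beta,\alpha)$$
preserves $X$, and the subsets $S_1,S_2,S_3$ of Proposition \ref{prop:typeIIIparam} are precisely a choice of orbit representatives for $\iota$. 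I claim $\iota$ has no fixed points: any fixed tuple must satisfy $p=q$ (so $p=q=1$ by coprimality), $k=m$, and $\alpha=\beta$, whence $p\beta-q\alpha=0$ and the primitivity condition degenerates to $(k+l)\wedge 0=k+l=1$, contradicting $k,l\ge 1$. Therefore every $\iota$-orbit has size exactly two and $C(n)=|X|/2$.

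To compute $|X|$, apply Lemma \ref{zmiakoulem} with $\beta_1=\beta_2=0$ and the roles of $k,l$ in the lemma played by $k+l$ and $l+m$: for each quintuple $(p,q,k,l,m)$ with $p\wedge q=1$, the number of valid $(\alpha,\beta)$ equals $(k+l)(l+m)\cdot\phi'\bigl((k+l)\wedge(l+m)\bigr)$. Substituting $u=k+l$ and $v=l+m$, the number of positive integer triples $(k,l,m)$ with $k+l=u$ and $l+m=v$ is $\min(u,v)-1$ (parametrized by $l\in\{1,\dots,\min(u,v)-1\}$, and automatically $0$ when $u=1$ or $v=1$), whence
$$2\,C(n)=|X|=\sum_{\substack{p,q,u,v\in\NN\\ pu+qv=n\\ p\wedge q=1}}\bigl(\min(u,v)-1\bigr)\cdot uv\cdot\phi'(u\wedge v).$$

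The remaining task, and the main obstacle, is to evaluate this sum in closed form. I plan to factor $d=u\wedge v$ by writing $u=da$ and $v=db$ with $a\wedge b=1$, turning the linear constraint into $d(pa+qb)=n$ and producing a Dirichlet-convolution structure in which the coprimality conditions $a\wedge b=1$ and $p\wedge q=1$ are independent. To handle the $\min(u,v)$ factor I will split the sum by the three cases $u<v$, $u=v$, $u>v$: the $u=v$ diagonal forces $u\mid n$ with $p+q=n/u$ and $p\wedge q=1$ and evaluates directly via $\phi$; the two off-diagonal ranges coincide by the symmetry of the summand under $(p,u)\leftrightarrow(q,v)$. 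After expansion, applying $J_2=\mu*\mathrm{id}^2$ and the standard evaluations of $\sum_{d\mid n}\mu(d)/d^c$ collected in Appendix \ref{sec:arithmetic} should cause the expression to collapse to $\tfrac{1}{12}(n-2)(n-3)J_2(n)$; dividing by two then yields the claimed formula. The routine intermediate manipulations will be deferred to Appendix \ref{sec:intermediatesums}.
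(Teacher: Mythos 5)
Your first half is correct and is essentially the paper's reduction in a tidier wrapper: the paper arrives at the same identity $2C(n)=\sum_{p(k+l)+q(l+m)=n,\ p\wedge q=1}(k+l)(l+m)\,\phi'\bigl((k+l)\wedge(l+m)\bigr)$ by splitting over the three parameter subsets $S_1,S_2,S_3$ of Proposition \ref{prop:typeIIIparam} and recombining by symmetry, whereas you phrase it as an involution with no fixed points; your fixed-point argument (forcing $k+l=1$) is sound, the shear count via Lemma \ref{zmiakoulem} is applied exactly as in the paper, and the substitution $u=k+l$, $v=l+m$ with multiplicity $\min(u,v)-1$ is correct.

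The genuine gap is the evaluation of the resulting sum, which you defer as ``routine intermediate manipulations'' but which is in fact the technical heart of the proposition. After your split by $u<v$, $u=v$, $u>v$ and extraction of $d=u\wedge v$, the off-diagonal pieces are restricted additive-divisor sums such as $\sum_{u<v,\ pu+qv=m}u^2v$ and $\sum_{u<v,\ pu+qv=m}uv$, and these do \emph{not} collapse by M\"obius inversion and the standard evaluations of $\sum_{d\mid n}\mu(d)d^{-c}$ alone. The unrestricted versions are the additive convolutions $(\sigma_1\Delta\sigma_2)(m)$ and $(\sigma_1\Delta\sigma_1)(m)$; the first has no closed form in divisor functions at all (it is precisely the term that survives in $B(n)$ and $D(n)$), so a priori your expansion should be expected to leave such terms behind, and proving that they disappear for this particular combination is the nontrivial content. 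The paper needs two inputs to make the collapse happen: Ramanujan's identity for $\sigma_1\Delta\sigma_1$ (Proposition \ref{prop:sigma1convsigma1}) and, crucially, the Dirichlet-series identity behind Lemma \ref{lem:U}, which shows that the \emph{restricted} sum $\sum_{k>l,\ pk+ql=n}kl^2$ equals $\tfrac{1}{24}\sigma_4(n)+\tfrac12\sigma_3(n)-\tfrac1{24}(12n+1)\sigma_2(n)$ even though the unrestricted sum is not a divisor polynomial (see also Lemma \ref{lem:V}). Without supplying an argument of this strength, the assertion that the expression ``should collapse to $\tfrac1{12}(n-2)(n-3)J_2(n)$'' is an unproved claim, not a deferral of routine bookkeeping; as it stands the proof is incomplete at exactly the step where the stated formula is actually established.
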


We first count the contribution of the shear parameters to our count, in the following lemma:
\begin{lemma} \label{typeIIIalphabeta}
Let $p,q,k,l,m \in \NN$ and $p \wedge q = 1$. The number of $(\alpha, \beta) \in \ZZ^2$ such that 
$$ 0 \leq \alpha< k+l, 0 \leq \beta < m+l,  \hspace{1cm}\text{ and } \hspace{1cm} (k+l) \wedge (l+m) \wedge (p\beta - q\alpha) =1$$
is given by,
$$ (k+l)(l+m) \phi'((k+l) \wedge (l+m))$$
\end{lemma}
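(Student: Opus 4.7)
The plan is to derive Lemma \ref{typeIIIalphabeta} as an immediate specialization of Lemma \ref{zmiakoulem}. That prior lemma counts lattice points $(\alpha,\gamma)$ in a box $[\beta_1,\beta_1+k)\times[\beta_2,\beta_2+l)$ satisfying $k\wedge l\wedge(p\gamma-q\alpha)=1$, giving the total $kl\cdot\phi'(k\wedge l)$. I would invoke it with the role of its ``$k$'' played by $k+l$, the role of its ``$l$'' played by $l+m$, the variable $\gamma$ relabeled as $\beta$, and the shift parameters $\beta_1=\beta_2=0$. The hypothesis $p\wedge q=1$ is already present in both statements, so no further preparation is needed.

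Under these substitutions, the range conditions $0\le\alpha<k+l$ and $0\le\beta<l+m$ are exactly the box conditions of Lemma \ref{zmiakoulem}, and the coprimality condition $(k+l)\wedge(l+m)\wedge(p\beta-q\alpha)=1$ is exactly the arithmetic condition there. Hence the count is
\[
(k+l)(l+m)\cdot\phi'\bigl((k+l)\wedge(l+m)\bigr),
\]
which is the claimed formula. Because the statement is a direct instance of an already-proved lemma — in contrast to Lemma \ref{typeIIalphabeta}, which required the change of variables $\alpha'=\alpha-l$, $\beta'=\beta+l$ to absorb the extra term $(p+q)l$ — there is no genuine obstacle here; the only care needed is to confirm that the geometry of cylinder diagram C produces the ``naked'' linear form $p\beta-q\alpha$ (with no additive constant to eliminate) so that Lemma \ref{zmiakoulem} applies with $\beta_1=\beta_2=0$ directly.
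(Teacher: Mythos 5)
Your proposal is correct and matches the paper's proof exactly: the paper also proves this lemma by applying Lemma \ref{zmiakoulem} with $k$ and $l$ replaced by $k+l$ and $l+m$ and with $\beta_1=\beta_2=0$. Your added remark contrasting this with the shifted substitution needed for Lemma \ref{typeIIalphabeta} is accurate but not required.
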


\begin{proof}
Apply Lemma \ref{zmiakoulem} with $k$ and $l$ as $k+l$ and $l+m$ and $\beta_i = 0$. \end{proof}

We are now ready to prove Proposition  \ref{prop:typeIIIcount}
\begin{proof}[Proof of Proposition \ref{prop:typeIIIcount}]
We first note that we need to count the parameters stated in Proposition \ref{prop:typeIIIparam} under the conditions stated in  Lemma \ref{lem:allprimitivity}. Hence, using Lemma \ref{typeIIIalphabeta}, which counts the contribution of the shear parameters,  to count the number of primitive $n$-square surfaces in $\calH(1,1)$ with cylinder diagram C, we must evaluate the sum:
$$\sum_{\substack{p, q, k, l, m \in \NN  \\ k < m \\ p(k+l) + q(l+m) = n \\ p \wedge q = 1}} (k+l )(l+m) \phi'((k+l) \wedge (l+m)) + \sum_{\substack{p, q, k, l \in \NN   \\ p < q \\ (p+q)(k+l)  = n \\ p \wedge q = 1}} (k+l )\phi(k+l) +   \frac{1}{2}\sum_{\substack{ k, l \in \NN   \\ 2(k+l)  = n }} (k+l)\phi(k+l) $$

Let $C_1(n), C_2(n), C_3(n)$ be the first, second and third summation terms in the above expression. 
We start with $C_1(n)$.
Using symmetry between $k$ and $m$,  we have that 
\begin{align*}
C_1(n) &=  \sum_{\substack{p, q, k, l, m \in \NN  \\ k < m \\ p(k+l) + q(l+m) = n \\ p \wedge q = 1}} (k+l )(l+m) \phi'((k+l) \wedge (l+m))\\&= \frac{1}{2}  \sum_{\substack{p, q, k, l, m \in \NN  \\ p(k+l) + q(l+m) = n \\ p \wedge q = 1 }} (k+l )(l+m) \phi'((k+l) \wedge (l+m)) -  \frac{1}{2} \sum_{\substack{p, q, k, l\in \NN  \\ (p+q)(l+k) = n \\ p \wedge q = 1}} (k+l ) \phi(k+l) 
\end{align*}

Next, due to symmetry between $p$ and $q$, 
\begin{align*}
C_2(n) + C_3(n) =  \sum_{\substack{p, q, k, l \in \NN   \\ p < q \\ (p+q)(k+l)  = n \\ p \wedge q = 1}} (k+l )\phi(k+l) +  \frac{1}{2}\sum_{\substack{ k, l \in \NN   \\ 2(k+l)  = n }} (k+l)\phi(k+l) = \frac{1}{2}  \sum_{\substack{p, q, k, l\in \NN  \\ (p+q)(l+k) = n \\ p \wedge q = 1}} (k+l ) \phi(k+l)
\end{align*}

Hence, 
\begin{align*}
C(n) = C_1(n)+C_2(n) + C_3(n) = \frac{1}{2}  \sum_{\substack{p, q, k, l, m \in \NN  \\ p(k+l) + q(l+m) = n \\ p \wedge q = 1 }} (k+l )(l+m) \phi'((k+l) \wedge (l+m))
\end{align*}
Now, we write $C \equiv \frac{1}{2} (\mu * \tilde{C})$ where 
$$\tilde{C}(n) =  \sum_{\substack{p, q, k, l, m \in \NN  \\ p(k+l) + q(l+m) = n }} (k+l )(l+m) \phi'((k+l) \wedge (l+m))$$
Next, we simplify $\tilde{C}(n)$ as, 
\begin{align*}
 \sum_{\substack{p, q, k, l, m \in \NN  \\ k > l, m > l\\ pk + qm = n }} km\cdot\phi'(k\wedge m) =  \sum_{\substack{p, q, k, l \in \NN  \\ k > l\\ pk + ql = n }} kl(l-1)\cdot\phi'(k\wedge l) + \sum_{\substack{p, q, k, l \in \NN  \\ k > l\\ pl + qk = n }} kl(l-1)\cdot\phi'(k\wedge l) + \sum_{\substack{p, q, l \in \NN  \\  (p + q)l = n }} l(l-1)\phi(l) 
\end{align*}
Due to symmetry between $p$ and $q$ the first two summation terms are equal so we continue as,
\begin{align*}
\tilde{C}(n) & = 2 \sum_{\substack{p, q, k, l \in \NN  \\ k > l\\ pk + ql = n }} kl(l-1)\cdot\phi'(k\wedge l) +  \sum_{\substack{p, q, l \in \NN  \\  (p + q)l = n }} l(l-1)\phi(l)\\
&= 2 \sum_{\substack{p, q, k, l \in \NN  \\ k > l\\ pk + ql = n }} kl^2\cdot\phi'(k\wedge l)  - 2 \sum_{\substack{p, q, k, l \in \NN  \\ k > l\\ pk + ql = n }} kl\cdot\phi'(k\wedge l) + \sum_{\substack{p, q, l \in \NN  \\  (p + q)l = n }} l^2\phi(l) - \sum_{\substack{p, q, l \in \NN  \\  (p + q)l = n }} l\phi(l)
\end{align*}
But, using symmetry between $k$ and $l$,
$$  \sum_{\substack{p, q, k, l \in \NN  \\ k > l\\ pk + ql = n }} kl\cdot\phi'(k\wedge l)  = \frac{1}{2} \sum_{\substack{p, q, k, l \in \NN  \\ pk + ql = n }} kl\cdot\phi'(k\wedge l)  - \frac{1}{2} \sum_{\substack{p, q, l \in \NN  \\  (p + q)l = n }} l\phi(l)$$
So,
$$\tilde{C}(n)= 2 \sum_{\substack{p, q, k, l \in \NN  \\ k > l\\ pk + ql = n }} kl^2\cdot\phi'(k\wedge l) +  \sum_{\substack{p, q, l \in \NN  \\  (p + q)l = n }} l^2\phi(l) - \sum_{\substack{p, q, k, l \in \NN  \\ pk + ql = n }} kl\cdot\phi'(k\wedge l) $$

We simplify the first summation term in Lemma \ref{lem:U} and third summation term in Lemma \ref{lem:V}.  The second summation term simplifies as:
 $$\sum_{\substack{p, q, l \in \NN  \\  (p + q)l = n }} l^2\phi(l) = \sum_{l |n} l^2 \phi(l)\left(\frac{n}{l} -1\right)= \sum_{l |n} l^2 \phi(l)\frac{n}{l}  - \sum_{l |n} l^2 \phi(l)= (\Id_2\cdot \phi * \Id_1) - \Id_2 \cdot \phi * 1)(n)$$

Putting all of the terms together and simplifying using the identities in Proposition \ref{prop:arithident},
\begin{align*}
C &\equiv \frac{1}{2}(\mu * \tilde{C})\\
&\equiv \frac{1}{2}\mu* \left(2(\Id_2 \cdot \mu) *  \left(\frac{1}{24}\sigma_4 + \frac{1}{2} \sigma_3 - \frac{1}{24}(12\Id_1+1)\sigma_2\right) + \Id_2 \cdot \phi * \Id_1 - \Id_2 \cdot \phi *1\right.\\
&-    \left.(\Id_1 \cdot \mu) * \left(\frac{5}{12} \sigma_3 + \frac{1}{12}\sigma_1 - \frac{1}{2}\Id_1 \sigma_1\right)\right)\\
&\equiv (\Id_2 \cdot \mu) *\left(\frac{1}{24} \Id_4 + \frac{1}{2} \Id_3 - \frac{1}{2}(\Id_1\cdot \sigma_2*\mu) - \frac{1}{24}\Id_2\right) + \frac{1}{2}\left(\mu * \Id_2 \cdot \phi * \Id_1 - \mu* \Id_2 \cdot \phi * 1\right)\\
& - (\Id_1 \cdot \mu)*\left(\frac{5}{24} \Id_3 + \frac{1}{24}\Id_1 - \frac{1}{4}(\Id_1 \cdot \sigma_1 * \mu)\right)\\
&\equiv \frac{1}{24} \Id_2\cdot J_2 - \frac{5}{24} \Id_1\cdot J_2 + \frac{1}{4} J_2
\end{align*}

\end{proof}
\subsection{Enumeration for Cylinder Diagram D}
In this section we count the number of primitive $n$-square surfaces in $\calH(1,1)$ with cylinder diagram D. 

\begin{proposition}\label{prop:typeIVcount}
The number of primitive $n$-square surfaces in $\calH(1,1)$ with cylinder diagram $D$ is given by,
$$ D(n)= \left(\frac{1}{6}n^2 - \frac{1}{6}n\right)J_2(n) - \bigl((\mu \cdot \sigma_2) * (\sigma_1 \Delta \sigma_2)\bigr)(n)$$

\end{proposition}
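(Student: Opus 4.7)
The plan is to enumerate primitive $n$-square surfaces with cylinder diagram $D$ by the same direct strategy used for diagrams $A$, $B$, and $C$. First I would apply Lemma \ref{zmiakoulem} to count the admissible shear tuples. For fixed geometric data $(p,q,r,k,l)$ with $(p+q)\wedge(r+q)=1$ and any fixed $\beta\in[0,k+l)$, the primitivity condition
\[
k\wedge l\wedge\bigl((p+q)\gamma-(r+q)\alpha+(p-r)\beta\bigr)=1
\]
falls directly under Lemma \ref{zmiakoulem} (with $(p+q),(r+q)$ in the roles of $p,q$ and the additive shift $(p-r)\beta$ absorbed into the base point), giving $kl\cdot\phi'(k\wedge l)$ valid pairs $(\alpha,\gamma)$. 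Summing over $\beta$ yields $(k+l)kl\cdot\phi'(k\wedge l)$ shear tuples per quintuple. The non-uniqueness in Proposition \ref{prop:typeIVparam} is governed by the cut-and-paste involution $(p,q,r,k,l,\alpha,\beta,\gamma)\leftrightarrow(r,q,p,l,k,\gamma,\beta,\alpha)$; a fixed point would require $p=r$, $k=l$, $\alpha=\gamma$, but the primitivity condition $(p+q)\wedge(r+q)=1$ then forces $p+q=1$, impossible for $p,q\in\NN$. Therefore every primitive surface is counted exactly twice in the symmetric (unrestricted) sum, and
\[
D(n) \;=\; \tfrac{1}{2}\sum_{\substack{p,q,r,k,l\in\NN \\ (p+q)k+(r+q)l=n \\ (p+q)\wedge(r+q)=1}} (k+l)kl\cdot\phi'(k\wedge l).
\]

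Next I would substitute $P=p+q$ and $R=r+q$. Each admissible pair with $P,R\geq 2$ arises from $\min(P,R)-1$ triples $(p,q,r)\in\NN^3$, and the $(P,k)\leftrightarrow(R,l)$ swap symmetry of the constraint has no fixed points (the diagonal $P=R$ with $P\wedge R=1$ forces $P=R=1$, excluded by $P,R\geq 2$). Hence
\[
D(n) \;=\; \sum_{\substack{1\leq P<R,\,k,l\geq 1 \\ Pk+Rl=n,\,P\wedge R=1}}(P-1)(k+l)kl\cdot\phi'(k\wedge l),
\]
which splits as $D(n)=X(n)-B_1(n)$, where $B_1(n)$ is exactly the sum evaluated (via Lemma \ref{lem:X}) during the proof of Proposition \ref{typeIIcount}, and $X(n):=\sum_{P<R}P\,(k+l)kl\,\phi'(k\wedge l)$ is the analogous sum with an additional weight of $P$. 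Applying Möbius inversion to the coprimality condition in $X(n)$ and then collapsing the resulting Dirichlet and additive convolutions via the identities in Proposition \ref{prop:arithident} should produce a closed form which, combined with the known evaluation of $B_1(n)$, simplifies to $\tfrac{1}{6}(n^2-n)J_2(n)-\bigl((\mu\cdot\sigma_2)*(\sigma_1\Delta\sigma_2)\bigr)(n)$. The main obstacle is the evaluation of $X(n)$: the extra $P$-weight breaks the direct parallel with $B_1$ and will almost certainly require a new intermediate lemma in Appendix \ref{sec:intermediatesums} analogous to Lemma \ref{lem:X} but tracking a weighted sum.

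As a conceptually cleaner alternative, and as a cross-check, one can invoke the identity $A(n)+B(n)+C(n)+D(n)=E(n)$, valid because every primitive surface in $\calH(1,1)$ has a unique cylinder diagram. Together with $E(n)=\tfrac{1}{6}(n-2)(n-3)J_2(n)$, known from Bloch--Okounkov \cite{blochokounkov} and Dijkgraaf \cite{dijkgraaf}, the formulas in Propositions \ref{prop:typeIcount}, \ref{typeIIcount}, and \ref{prop:typeIIIcount} yield $D(n)=E(n)-A(n)-B(n)-C(n)$ immediately: the $\tfrac{1}{2}nJ_1(n)$ contributions in $A$ and $B$ cancel, the convolution in $B$ survives with a negated sign, and the $J_2$-coefficients collapse to $\tfrac{1}{6}(n^2-n)$, recovering the stated formula.
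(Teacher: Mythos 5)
Your reduction is sound and is essentially the paper's own: the shear count $(k+l)kl\,\phi'(k\wedge l)$ per quintuple (via Lemma \ref{zmiakoulem}), the observation that fixed points of the involution $(p,q,r,k,l,\alpha,\beta,\gamma)\leftrightarrow(r,q,p,l,k,\gamma,\beta,\alpha)$ would force $(p+q)\wedge(r+q)=p+q=1$ and hence do not occur, the halving of the symmetric sum, and the substitution $P=p+q$, $R=r+q$ producing the weight $\min(P,R)-1$ all match the paper's steps, and your split $D(n)=\text{(weighted sum)}-\text{(unweighted sum)}$ is exactly the paper's $D(n)=D_{11}(n)-D_{12}(n)$, with the unweighted part handled by Lemma \ref{lem:X}. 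The genuine gap is the weighted sum. You write that M\"obius inversion plus Proposition \ref{prop:arithident} ``should produce a closed form'' and correctly flag that a new intermediate lemma is needed, but you do not supply it; this is precisely the new computational content of the proposition. In the paper it is Lemma \ref{lem:W}, which establishes $\sum_{p>q,\ p\wedge q=1,\ pk+ql=n} q\,(k+l)kl\,\phi'(k\wedge l)=\bigl(\tfrac{1}{12}n^{2}-\tfrac{1}{6}n\bigr)J_2(n)$, and its proof is not a routine rerun of Lemma \ref{lem:X}: after stripping the two gcd conditions one must evaluate $\tilde W_1(n)=\sum_{pk+q(k+l)=n}(k+l)klq$, which the paper does by comparing two different splittings of the Dirichlet series $\sum lk(l-k)q/(pk+ql)^{s}$ to get $\tilde W_1=\tfrac{1}{12}n\sigma_3+\tfrac{1}{12}n\sigma_1-\tfrac{1}{6}n^{2}\sigma_1$, and only then collapses the convolutions. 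Without some such evaluation, the asserted final formula is stated, not derived.

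Your fallback $D(n)=E(n)-A(n)-B(n)-C(n)$ is arithmetically consistent with the claimed formula (the $J_1$ terms cancel and the $J_2$ coefficients do collapse to $\tfrac{1}{6}(n^{2}-n)$), but it cannot stand in for the proof here. Within this paper $E(n)=\tfrac{1}{6}(n-2)(n-3)J_2(n)$ is itself \emph{obtained} by summing the four diagram counts, so the argument would be circular; and if you instead lean on Bloch--Okounkov/Dijkgraaf, you are outsourcing exactly the quantity at stake, since extracting the primitive $\calH(1,1)$ count in this closed form from those generating-function results is a nontrivial step you have not carried out. It is a fine consistency check, but the burden of the missing Lemma \ref{lem:W}-type computation is merely relocated, not discharged.
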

We first count the contribution of the shear parameters to our count, in the following lemma:
\begin{lemma}\label{typeIValphabeta}
Let $p, q, r, k, l \in \NN$ such that $(p+q) \wedge (r+q) = 1$. The number of $(\alpha, \beta, \gamma) \in \ZZ^3$ such that 
$$ 0 \leq \alpha< k, 0 \leq \beta < k+l, 0 \leq \gamma < l \hspace{1cm}\text{ and } \hspace{1cm} k \wedge l \wedge ((p+q)(\beta+ \gamma) -(r+ q)(\alpha + \beta)) =1$$ is given by,
$$ (k+l)kl \cdot\phi'(k\wedge l)$$
\end{lemma}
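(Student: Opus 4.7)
The plan is to reduce the three-variable counting problem to a two-variable one by exploiting the way the shear variables appear in the expression $(p+q)(\beta+\gamma) - (r+q)(\alpha+\beta)$, so that Lemma \ref{zmiakoulem} can be applied directly. Write $P := p+q$ and $Q := r+q$, so the hypothesis $(p+q) \wedge (r+q) = 1$ becomes $P \wedge Q = 1$, exactly the coprimality needed to invoke Lemma \ref{zmiakoulem}.

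First, I would perform the substitution $\alpha' := \alpha + \beta$ and $\gamma' := \gamma + \beta$, observing that $(p+q)(\beta+\gamma) - (r+q)(\alpha+\beta) = P\gamma' - Q\alpha'$. For each fixed value of $\beta \in [0, k+l)$, the map $(\alpha,\gamma) \mapsto (\alpha', \gamma')$ is a bijection from $[0,k) \times [0,l)$ onto $[\beta, k+\beta) \times [\beta, l+\beta)$. Thus for fixed $\beta$, counting triples reduces to counting pairs $(\alpha', \gamma') \in \ZZ^2$ with $\alpha' \in [\beta, k+\beta)$, $\gamma' \in [\beta, l+\beta)$ and $k \wedge l \wedge (P\gamma' - Q\alpha') = 1$.

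Next, I would apply Lemma \ref{zmiakoulem} with $\beta_1 = \beta_2 = \beta$, $p \leftarrow P$, $q \leftarrow Q$, and $(k, l)$ as stated. Since $P \wedge Q = 1$, this lemma gives that the number of such pairs is $kl \cdot \phi'(k \wedge l)$, independently of $\beta$. Finally, summing over the $k+l$ admissible values of $\beta$ yields the desired total of $(k+l) k l \cdot \phi'(k \wedge l)$.

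There is essentially no genuine obstacle: once the substitution $\alpha' = \alpha + \beta$, $\gamma' = \gamma + \beta$ is made, the expression collapses into precisely the form $P\gamma' - Q\alpha'$ handled by Lemma \ref{zmiakoulem}, and the independence of the resulting count from $\beta$ makes the outer sum trivial. The only thing that needs a brief check is that the substitution is a bijection onto the shifted rectangle for each fixed $\beta$, which is immediate. This parallels the reparametrization trick already used in the proof of Lemma \ref{typeIIalphabeta}, the only new wrinkle being that here we also have to integrate out the auxiliary variable $\beta$ at the end.
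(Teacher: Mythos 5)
Your proposal is correct and follows essentially the same route as the paper: fix $\beta$, substitute $\alpha' = \alpha+\beta$, $\gamma' = \gamma+\beta$ so the expression becomes $(p+q)\gamma' - (r+q)\alpha'$, apply Lemma \ref{zmiakoulem} with the shifted box, and multiply by the $k+l$ choices of $\beta$. Nothing is missing.
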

\begin{proof}
We will show that, given $p, q, r, k, l \in \NN$ such that $(p+q) \wedge (r+q) = 1$ and a fixed $0 \leq \beta < k+l$, the number of $(\alpha, \gamma)$ pairs satisfying 
\begin{gather}\label{cond1}
0 \leq \alpha < k, 0 \leq \gamma < l \hspace{1cm} \text{ and } \hspace{1cm} k \wedge l \wedge ((p+q)(\beta+ \gamma) -(r+ q)(\alpha + \beta)) =1 
\end{gather}
is given by
$kl \cdot\phi'(k\wedge l)$. Then, since there are $k+l$ possible $\beta$ values, this will imply the Lemma. 

Let $p' = p+q$ and $r' = r+q$. Then $p' \wedge r' = 1$. Now set $\alpha' := \alpha + \beta$ and $\gamma' = \gamma + \beta$. Then we want to count $(\alpha', \gamma') \in \ZZ^2$ such that
\begin{gather}\label{cond2}
\beta \leq \alpha' < k+\beta; \beta \leq \gamma' < l+\beta \hspace{1cm} \text{ and } \hspace{1cm} k \wedge l \wedge (p'\gamma' -r'\alpha') =1 
\end{gather}

Applying Lemma \ref{zmiakoulem}, the number of distinct $(\alpha', \gamma')$ that satisfy (\ref{cond2}) is given by
$kl \cdot\phi'(k\wedge l)$\end{proof}
We are now ready to prove Proposition  \ref{prop:typeIVcount}

\begin{proof}[Proof of Proposition \ref{prop:typeIVcount}]
First note that we need to count the parameters from Proposition \ref{prop:typeIVparam} under the conditions imposed by Lemma \ref{lem:allprimitivity}. Since Lemma \ref{typeIValphabeta} counts the contribution of the shear parameters, we must evaluate the sum
$$D(n) =  \sum_{\substack{p, q, r, k, l \in \NN \\ k < l, \\ (p+q)k + (r+q)l = n \\ (p+q) \wedge (q+r) = 1}} (k+l)kl \cdot\phi'(k\wedge l) + \sum_{\substack{p, q, r, k\in \NN \\ p <  r  \\ (p+r+2q)k = n \\ (p+q) \wedge (q+r) = 1}} 2k^2 \phi(k) $$
Let 
$$D_1(n) =   \sum_{\substack{p, q, r, k, l \in \NN \\ k < l, \\ (p+q)k + (r+q)l = n \\ (p+q) \wedge (q+r) = 1}} (k+l)kl \cdot\phi'(k\wedge l) \hspace{1cm} \text{and} \hspace{1cm} D_2(n) = \sum_{\substack{p, q, r, k\in \NN \\ p <  r  \\ (p+r+2q)k = n \\ (p+q) \wedge (q+r) = 1}} 2k^2 \phi(k) $$
The role of $k$ and $l$ are symmetric in $D_1(n)$ so that,
$$D_1(n) = \frac{1}{2}\sum_{\substack{p, q, r, k, l \in \NN \\ (p+q)k + (r+q)l = n \\ (p+q) \wedge (q+r) = 1}} (k+l)kl \cdot\phi'(k\wedge l) - \frac{1}{2}\sum_{\substack{p, q, r, k\in \NN  \\ (p+r+2q)k = n \\ (p+q) \wedge (q+r) = 1}} 2k^2 \phi(k) $$
Notice that 
$$D_2(n) = \frac{1}{2}\sum_{\substack{p, q, r, k\in \NN   \\ (p+r+2q)k = n \\ (p+q) \wedge (q+r) = 1}} 2k^2 \phi(k) - \frac{1}{2}\sum_{\substack{p, q, r, k\in \NN \\ p = r  \\ (p+r+2q)k = n \\ (p+q) \wedge (q+r) = 1}} 2k^2 \phi(k) = \sum_{\substack{p, q, r, k\in \NN   \\ (p+r+2q)k = n \\ (p+q) \wedge (q+r) = 1}} k^2 \phi(k)$$
Then, together,
$$D(n) = D_1(n) + D_2(n) = \frac{1}{2} \sum_{\substack{p, q, r, k, l \in \NN \\ (p+q)k + (r+q)l = n \\ (p+q) \wedge (q+r) = 1}} (k+l)kl \cdot\phi'(k\wedge l) $$
Rewriting the sum, $D(n)$ becomes,
\begin{align*}
\frac{1}{2}\sum_{\substack{p, q, r, k, l \in \NN \\ p>q, r> q\\ pk + rl = n \\ p \wedge r = 1}} (k+l)kl \cdot\phi'(k\wedge l)&= \frac{1}{2}\sum_{\substack{q, r, k, l \in \NN \\ r> q\\ qk + rl = n \\ q \wedge r = 1}} (k+l)kl \cdot\phi'(k\wedge l) (q-1) + \frac{1}{2}\sum_{\substack{q, p, k, l \in \NN \\ p> q\\ pk + ql = n \\ q \wedge p = 1}} (k+l)kl \cdot\phi'(k\wedge l) (q-1)\\
 &= \frac{1}{2}\cdot2 \sum_{\substack{q, p, k, l \in \NN \\ p> q\\ pk + ql = n \\ q \wedge p = 1}} (k+l)kl \cdot\phi'(k\wedge l) (q-1)\\
  &= \sum_{\substack{q, p, k, l \in \NN \\ p> q\\ pk + ql = n \\ q \wedge p = 1}} (k+l)kl \cdot\phi'(k\wedge l) q - \sum_{\substack{q, p, k, l \in \NN \\ p> q\\ pk + ql = n \\ q \wedge p = 1}} (k+l)kl \cdot\phi'(k\wedge l)
 \end{align*}
Further define 
$$D_{11}(n) = \sum_{\substack{q, p, k, l \in \NN \\ p> q\\ pk + ql = n \\ q \wedge p = 1}} (k+l)kl \cdot\phi'(k\wedge l) q \hspace{1cm}\text{ and }\hspace{1cm} D_{12}(n) = \sum_{\substack{q, p, k, l \in \NN \\ p> q\\ pk + ql = n \\ q \wedge p = 1}} (k+l)kl \cdot\phi'(k\wedge l)$$ 

We note that $D_{12}$ has already been simplified in Lemma \ref{lem:X}.
We simplify $D_{11}$ in Lemma \ref{lem:W}. Then finally, 
$$ D(n) = D_{11}(n) - D_{12}(n) = \left(\frac{1}{6}n^2 - \frac{1}{6}n\right)J_2(n) - \bigl((\mu \cdot \sigma_2) * (\sigma_1 \Delta \sigma_2)\bigr)(n)$$\end{proof}

\subsection{The total primitive count in $\calH(1,1)$}

We can now put together the counts for different cylinder diagrams and get the total count of primitive $n$-square surfaces in $\calH(1,1)$.

\begin{theorem}The number of primitive $n$-square surfaces in $\calH(1,1)$ is given by,
$$E(n):=\frac{1}{6}(n-2)(n-3)J_2(n)$$
\end{theorem}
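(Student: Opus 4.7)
The four cylinder diagrams A, B, C, D form a partition of the set of primitive $n$-square surfaces in $\calH(1,1)$, since every such surface admits a unique horizontal cylinder decomposition, and Proposition~\ref{prop:cyldiagclass} guarantees that each decomposition matches exactly one of the four diagrams. Therefore $E(n) = A(n) + B(n) + C(n) + D(n)$, and the plan is simply to substitute the formulas derived in Propositions~\ref{prop:typeIcount}, \ref{typeIIcount}, \ref{prop:typeIIIcount}, and \ref{prop:typeIVcount} and collect terms. All the non-trivial work has already been done in those sections; what remains is a clean bookkeeping computation together with the observation that two seemingly complicated terms cancel.

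First I would add $A(n)$ and $B(n)$: the $\tfrac{1}{2}nJ_1(n)$ summand appears with opposite signs in these two formulas and cancels. Next I would add $D(n)$: the convolution term $\bigl((\mu \cdot \sigma_2) * (\sigma_1 \Delta \sigma_2)\bigr)(n)$ appears with opposite signs in $B(n)$ and $D(n)$ and cancels as well. After these two cancellations, $A(n) + B(n) + D(n)$ reduces to a pure scalar multiple of $J_2(n)$. A short coefficient calculation gives
\begin{equation*}
A(n)+B(n)+D(n) \;=\; \left(-\tfrac{1}{24}n^2 - \tfrac{11}{24}n + \tfrac{3}{4} + \tfrac{1}{6}n^2 - \tfrac{1}{6}n\right) J_2(n) \;=\; \tfrac{1}{8}(n-2)(n-3)\, J_2(n).
\end{equation*}

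Finally I would add $C(n) = \tfrac{1}{24}(n-2)(n-3)J_2(n)$, obtaining
\begin{equation*}
E(n) = \tfrac{1}{8}(n-2)(n-3)J_2(n) + \tfrac{1}{24}(n-2)(n-3)J_2(n) = \tfrac{4}{24}(n-2)(n-3)J_2(n) = \tfrac{1}{6}(n-2)(n-3)J_2(n),
\end{equation*}
as claimed. There is no real obstacle: the algebraic structure of the four cylinder-diagram formulas was evidently engineered so that the two troublesome pieces (the $nJ_1(n)$ term and the Dirichlet/additive convolution term) pair off, leaving an elementary polynomial multiple of $J_2(n)$. The only thing worth double-checking during the write-up is the arithmetic of combining the $J_2(n)$ coefficients, in particular converting $\tfrac{1}{24}(2n+9)(n-2)$ (from the theorem statement of $B(n)$) into $\tfrac{1}{12}n^2 + \tfrac{5}{24}n - \tfrac{3}{4}$ (as used in Proposition~\ref{typeIIcount}) to make sure sign conventions line up.
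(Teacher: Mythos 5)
Your proposal is correct and is essentially the paper's own proof: the paper likewise computes $E(n)=A(n)+B(n)+C(n)+D(n)$ from Propositions~\ref{prop:typeIcount}, \ref{typeIIcount}, \ref{prop:typeIIIcount}, \ref{prop:typeIVcount}, with the $\tfrac12 nJ_1(n)$ terms and the $\bigl((\mu\cdot\sigma_2)*(\sigma_1\Delta\sigma_2)\bigr)(n)$ terms cancelling in pairs. Your coefficient bookkeeping, including the identity $\tfrac{1}{24}(2n+9)(n-2)=\tfrac{1}{12}n^2+\tfrac{5}{24}n-\tfrac34$, checks out.
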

\begin{proof}
 The total count is,
\begin{align*}
&A(n) + B(n) + C(n) + D(n)\\ &= \frac{1}{2}nJ_1(n) + \left(\frac{1}{24}n^2 - \frac{1}{4}n\right)J_2(n)  + \bigl((\mu \cdot \sigma_2) * (\sigma_1 \Delta \sigma_2)\bigr)(n) - \left(\frac{1}{12}n^2 + \frac{5}{24}n - \frac{3}{4}\right)J_2(n) - \frac{1}{2}nJ_1(n) \\&+  \frac{1}{24}(n-2)(n-3) J_2(n) + \left(\frac{1}{6}n^2 - \frac{1}{6}n\right)J_2(n) -  \bigl((\mu \cdot \sigma_2) * (\sigma_1 \Delta \sigma_2)\bigr)(n)(n) \\
&= \frac{1}{6} (n-2)(n-3)J_2(n)
\end{align*}\end{proof}

\section{Proportion of the different cylinder diagrams}\label{sec:proportions}

In this section we look at the proportion of surfaces with the different cylinder diagrams as $n \rightarrow \infty$.

We will need the following result of Ingham \cite{ingham}(see also \cite{lemkeolivershresthathorne} for a simpler proof and second order terms) to get the first order asymptotic for $\sigma_2 \Delta \sigma_1$. 
\begin{theorem} \label{thm:asymconv}
For any complex numbers $x$ and $y$ with positive real parts, 
$$(\sigma_x \Delta \sigma_y)(n) = \frac{\Gamma(x+1)\Gamma(y+1)}{\Gamma(x+y+2)}\frac{\zeta(x+1)\zeta(y+1)}{\zeta(x+y+2)}\sigma_{x+y+1}(n) + O(n^{x+y+\alpha})$$
where 
$$\alpha = \begin{cases} 0 & \text{ if } \Re(x)>1, \Re(y) > 1\\
1- x+ \epsilon & \text{ if } \Re(x) \leq 1, \Re(y) \geq 1\\
1-y + \epsilon & \text{ if } \Re(x) \geq 1, \Re(y) \leq 1\\
1 - \frac{xy}{x+y -xy} + \epsilon & \text{ if } \Re(x)<1, \Re(y) < 1
\end{cases}$$
$\Gamma$ is the Gamma function, and $\zeta$ is the Riemann-zeta function. 

\end{theorem}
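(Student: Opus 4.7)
The plan is to attack the additive convolution by expanding one of the two divisor functions and swapping the order of summation, reducing the problem to divisor sums in arithmetic progressions. Concretely, write
\[ (\sigma_x \Delta \sigma_y)(n) = \sum_{m=1}^{n-1} \sigma_x(m)\, \sigma_y(n-m) = \sum_{d \geq 1} d^y \sum_{k=1}^{\lfloor (n-1)/d \rfloor} \sigma_x(n - kd), \]
where the inner sum ranges over values of $\sigma_x$ along the arithmetic progression $n \pmod d$ inside $[1, n-1]$. This decomposes the problem into (a) a uniform asymptotic for partial sums of $\sigma_x$ in arithmetic progressions, and (b) a careful summation over $d$ weighted by $d^y$.

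For step (a), I would use the standard asymptotic
\[ \sum_{\substack{m \leq N \\ m \equiv a \pmod d}} \sigma_x(m) = \frac{N^{x+1}}{x+1}\, A_x(d, a) + (\text{error}), \]
where $A_x(d, a)$ is a multiplicative-in-$d$ arithmetic factor built from local Ramanujan-like sums. Substituting into the outer sum and applying Euler--Maclaurin to the discrete $k$-sum converts it into a Beta integral, producing the Gamma ratio
\[ \int_0^1 t^x (1-t)^y \, dt = \frac{\Gamma(x+1)\, \Gamma(y+1)}{\Gamma(x+y+2)}. \]
Meanwhile, summing $A_x(d, n)\, d^y$ over all $d$ yields an Euler product: the local factor at a prime $p \nmid n$ contributes $(1 - p^{-x-y-2})$ (assembling into $1/\zeta(x+y+2)$), while the local factor at $p \mid n$ encodes the $v_p(n)$-th divisor power, combining with the two zeta factors $\zeta(x+1)\zeta(y+1)$ coming from the outer $d$-sum and the inner divisor expansion to produce the arithmetic main term $\frac{\zeta(x+1)\zeta(y+1)}{\zeta(x+y+2)}\, \sigma_{x+y+1}(n)$.

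The error analysis splits by the real parts of $x$ and $y$. The divisor-in-progression error contributes a term controlled by the classical Dirichlet divisor problem, while the tail of the $d$-sum contributes an Euler--Maclaurin-type error. When $\Re(x), \Re(y) > 1$ both tails converge absolutely and only the $O(n^{x+y})$ boundary error remains. When one shift drops into the critical strip $\Re(\cdot) \leq 1$, one loses an $n^{1-x+\epsilon}$ or $n^{1-y+\epsilon}$ factor, matching the stated $\alpha$. In the hardest case $\Re(x), \Re(y) < 1$, I would perform a Dirichlet-hyperbola decomposition at a parameter $D$, using the expansion in $d$ for $d \leq D$ and the symmetric expansion (with the roles of $x$ and $y$ swapped) for $d > D$; optimizing $D$ against the divisor-problem error produces the unusual exponent $\alpha = 1 - xy/(x+y-xy)$.

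\emph{Main obstacle.} The hardest step will be the Euler product identification: while heuristically the main term must produce $\sigma_{x+y+1}(n)$, verifying this requires a prime-by-prime analysis of how $A_x(d, n)$ interacts with $d^y$ when $p \mid n$, since the local factor depends delicately on $v_p(n)$ and must match the local expansion of $\sigma_{x+y+1}(n)$ exactly. A secondary difficulty is obtaining the sharp error exponent $\alpha$ in the critical regime, which requires a careful balance in the hyperbola decomposition of the divisor-problem error against the tail-truncation error, and for general complex $x,y$ may force one to work through a Mellin--Barnes representation to analytically continue the formally divergent Euler product past its half-plane of absolute convergence.
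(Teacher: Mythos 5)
You should first be aware that the paper does not prove Theorem \ref{thm:asymconv} at all: it is quoted from Ingham \cite{ingham}, with a pointer to \cite{lemkeolivershresthathorne} for a simpler proof and second-order terms. So your sketch has to stand against the classical arguments, and as written it has a genuine gap in the extraction of the main term, not just in the error analysis.

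The gap is in the step where you substitute the progression asymptotic $\sum_{m\le N,\ m\equiv n\ (\mathrm{mod}\ d)}\sigma_x(m)=\frac{N^{x+1}}{x+1}A_x(d,n)+\text{error}$ into the identity $(\sigma_x\Delta\sigma_y)(n)=\sum_{d}d^y\sum_k\sigma_x(n-kd)$ and sum over \emph{all} $d$. That asymptotic is usable only for $d$ small compared to $n$; for $d$ of order $n$ the inner sum has boundedly many terms and the substitution is meaningless, yet because of the weight $d^y$ with $\Re(y)>0$ this large-$d$ range contributes at full main-term order $n^{x+y+1}$. If you carry out the substitution uniformly, the $d$-sum (with $N=n-d$) produces an integral of the shape $\int_0^1 t^{y-1}(1-t)^{x+1}dt$ and a Ramanujan-sum factor of the shape $\sigma_{x+1}(n)/(n^{x+1}\zeta(x+2))$, i.e.\ a main term proportional to $\frac{\Gamma(x+1)\Gamma(y)}{\Gamma(x+y+2)}\cdot\frac{\zeta(x+1)}{\zeta(x+2)}\,n^{y}\sigma_{x+1}(n)$ rather than $\frac{\Gamma(x+1)\Gamma(y+1)}{\Gamma(x+y+2)}\frac{\zeta(x+1)\zeta(y+1)}{\zeta(x+y+2)}\sigma_{x+y+1}(n)$; at $x=y=1$ this gives roughly $0.23\,n^3$ against Ramanujan's $\frac{5}{12}\sigma_3(n)$. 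In particular the factor $\zeta(y+1)$ cannot arise from the one-sided expansion, and the Beta integral you quote, $\int_0^1 t^x(1-t)^y\,dt$, does not come out of ``Euler--Maclaurin on the $k$-sum'': once the progression asymptotic is inserted there is no $k$-sum left, and the averaged inner sum carries $(1-t)^x$, not $(1-t)^y$ --- here you are pattern-matching to the known answer rather than deriving it. The repair is precisely the symmetric (hyperbola) decomposition of the divisors of $n-m$ into small divisors $d$ and small complementary divisors $(n-m)/d$, which you invoke only as an error-optimization device in the case $\Re(x),\Re(y)<1$: in fact it (or an equivalent device, e.g.\ a Perron/Mellin or Eisenstein-series argument as in the cited proofs) is needed in \emph{every} case merely to obtain the correct main term, and only after adding the two halves and recombining the resulting Euler products do $\zeta(y+1)$, $\zeta(x+y+2)$ and $\sigma_{x+y+1}(n)$ appear. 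The uniformity of the progression error in $d$, and the exponents $\alpha$ in the four regimes, would then have to be rederived on top of that corrected skeleton rather than asserted.
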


Then, we finish the proof of the main result with the following theorem.
\begin{theorem}The asymptotic densities of the various cylinder diagrams is given by the following limits.
\begin{enumerate}
\item $\lim_{n\rightarrow \infty} \frac{A(n)}{E(n)} = \frac{1}{4}$
\item $\lim_{n\rightarrow \infty} \frac{B(n)}{E(n)} = \frac{\zeta(2)\zeta(3)}{2\zeta(5)} - \frac{1}{2} \approx 0.453$
\item $\lim_{n\rightarrow \infty} \frac{C(n)}{E(n)} = \frac{1}{4}$
\item $\lim_{n\rightarrow \infty} \frac{D(n)}{E(n)} = 1 - \frac{\zeta(2) \zeta(3)}{2 \zeta(5)} \approx 0.047 $
\end{enumerate}
\end{theorem}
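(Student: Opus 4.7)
The proof separates into four cases. Cases $A$ and $C$ are elementary. The closed formula gives $C(n) = \tfrac14 E(n)$ as an identity, so the ratio is identically $\tfrac14$. For $A$, I divide by $E(n) = \tfrac16(n-2)(n-3)J_2(n)$:
\[
\frac{A(n)}{E(n)} \;=\; \frac{n(n-6)}{4(n-2)(n-3)} + \frac{3n\,J_1(n)}{(n-2)(n-3)\,J_2(n)}.
\]
The first summand tends to $\tfrac14$; the second is $O(1/n^2)$ via $J_1(n)\le n$ together with the uniform bound $J_2(n)\ge n^2/\zeta(2)$ (from $J_2(n)/n^2 = \prod_{p\mid n}(1-p^{-2})\ge 1/\zeta(2)$), so it vanishes.

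Cases $B$ and $D$ both reduce to the asymptotic of the Dirichlet convolution $\bigl((\mu\cdot\sigma_2)*(\sigma_1\Delta\sigma_2)\bigr)(n)$. I apply Theorem~\ref{thm:asymconv} with $(x,y)=(1,2)$; since $\Gamma(2)\Gamma(3)/\Gamma(5)=1/12$, this gives
\[
(\sigma_1\Delta\sigma_2)(n) \;=\; \tfrac{\zeta(2)\zeta(3)}{12\,\zeta(5)}\,\sigma_4(n) + O(n^{3+\epsilon}).
\]
Substitute into the Dirichlet convolution. The aggregated error is $\sum_{d\mid n}\sigma_2(d)(n/d)^{3+\epsilon}\ll n^{3+\epsilon}$, so
\[
\bigl((\mu\cdot\sigma_2)*(\sigma_1\Delta\sigma_2)\bigr)(n) \;=\; \tfrac{\zeta(2)\zeta(3)}{12\,\zeta(5)}\,\bigl((\mu\cdot\sigma_2)*\sigma_4\bigr)(n) + O(n^{3+\epsilon}).
\]
A direct check shows that $g := (\mu\cdot\sigma_2)*\sigma_4$ is multiplicative with $g(p) = p^4-p^2 = p^2 J_2(p)$ at every prime $p$, and more generally $g(p^k)$ and $p^{2k}J_2(p^k)$ agree in their leading $p^{4k}$ term. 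So $g(n)$ matches $n^2 J_2(n)$ to leading order, and using $E(n)\sim \tfrac16 n^2 J_2(n)$ I deduce
\[
\frac{\bigl((\mu\cdot\sigma_2)*(\sigma_1\Delta\sigma_2)\bigr)(n)}{E(n)} \;\longrightarrow\; \tfrac{\zeta(2)\zeta(3)}{12\,\zeta(5)}\cdot 6 \;=\; \tfrac{\zeta(2)\zeta(3)}{2\,\zeta(5)}.
\]

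Plugging back into the formulas of Theorem~\ref{thm:maincounts}: for $B$, the polynomial ratio $\tfrac{(2n+9)(n-2)J_2(n)/24}{E(n)}\to \tfrac12$ and the $\tfrac12 nJ_1(n)/E(n)$ term vanishes as in case $A$, giving $B(n)/E(n)\to \tfrac{\zeta(2)\zeta(3)}{2\zeta(5)}-\tfrac12$. For $D$, $\tfrac{n(n-1)J_2(n)/6}{E(n)}\to 1$, producing $D(n)/E(n)\to 1-\tfrac{\zeta(2)\zeta(3)}{2\zeta(5)}$. The main obstacle in this plan is the step $g(n)\sim n^2 J_2(n)$: the two functions are literally equal at squarefree $n$, but at repeated prime powers they differ by lower-order multiplicative corrections of relative size $O(p^{-2})$, which must be absorbed into the error. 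The cleanest way to do this uniformly is via Dirichlet series, matching the residue at the pole $s=5$ of the series for $g$ with that of $n\mapsto n^2 J_2(n)$ and appealing to a Tauberian/partial-summation step to convert pole agreement into the asymptotic density claim.
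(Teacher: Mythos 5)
Your cases A and C, the polynomial-ratio limits for B and D, and the bound on the aggregated error $\sum_{d\mid n}\sigma_2(d)(n/d)^{3+\epsilon}\ll_\epsilon n^{3+\epsilon}$ are all fine and essentially follow the paper's argument. The genuine gap is the main-term step for B and D, which you flag yourself: you need the pointwise asymptotic $g(n):=\bigl((\mu\cdot\sigma_2)*\sigma_4\bigr)(n)\sim n^2J_2(n)$ as $n\to\infty$, and this is false, not merely delicate. Both $g$ and $\Id_2\cdot J_2$ are multiplicative and agree at primes, but $g(p^2)=p^8-p^6-p^2$ while $p^4J_2(p^2)=p^8-p^6$; the Euler factor of the ratio at $p^k$ ($k\ge 2$) is $\frac{p^{4k}(p^4-p^2-1)+p^2}{p^{4k}(p^4-p^2-1)+p^{4k-2}}\le 1$, and at $2^2$ it equals $47/48$. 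Hence $g(n)/(n^2J_2(n))\le 47/48$ for every $n$ divisible by $4$, while the ratio equals $1$ on squarefree $n$, so $g(n)/(n^2J_2(n))$ has no limit and the discrepancy cannot be absorbed into any $o(1)$ error (its relative size is $\asymp p^{-6}$ per repeated prime, not $p^{-2}$, but in any case it does not vanish as $n\to\infty$). The proposed repair also cannot work in principle: matching residues of Dirichlet series at $s=5$ and invoking a Tauberian or partial-summation step controls the summatory function $\sum_{n\le x}g(n)$, not individual values $g(n)$, whereas the limits in the theorem are pointwise in $n$.

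The paper never meets this obstacle because the convolution actually produced by the enumeration (see the final lines of Lemma \ref{lem:X}) is $F:=(\Id_2\cdot\mu)*\mu*(\sigma_1\Delta\sigma_2)$; writing it as $(\mu\cdot\sigma_2)*(\sigma_1\Delta\sigma_2)$ is an abuse of notation, since $(\Id_2\cdot\mu)*\mu$ and $\mu\cdot\sigma_2$ differ at squares of primes (values $4$ and $0$ at $n=4$, respectively) --- exactly the discrepancy you ran into. With $F$ in place of your $g$-term, the main term after substituting Theorem \ref{thm:asymconv} is handled by an exact identity rather than an asymptotic one: $\mu*\sigma_4\equiv\Id_4$ together with complete multiplicativity of $\Id_2$ (Proposition \ref{prop:distributive}) gives $(\Id_2\cdot\mu)*\mu*\sigma_4\equiv\Id_2\cdot(\mu*\Id_2)\equiv\Id_2\cdot J_2$, so the main term is exactly $\frac{\zeta(2)\zeta(3)}{12\zeta(5)}\,n^2J_2(n)$, and your existing error estimate finishes the B and D limits (your direct use of the D formula, rather than the paper's $D=E-A-B-C$, is then fine). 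In short: replace the approximate comparison $g(n)\sim n^2J_2(n)$ by the exact identity for $(\Id_2\cdot\mu)*\mu*\sigma_4$; no Tauberian input is needed.
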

\begin{proof}
\begin{enumerate}
\item We have,
$$\lim_{n\rightarrow \infty} \frac{A(n)}{E(n)} =\lim_{n\rightarrow \infty} \frac{\frac{1}{2}n J_1(n) + (\frac{1}{24}n^2 - \frac{1}{4}n) J_2(n)}{\frac{1}{6}(n^2 - 5n + 6)J_2(n)} = \lim_{n \rightarrow \infty} \left(\frac{\frac{J_1(n)}{2nJ_2(n)}}{\frac{1}{6} - \frac{5}{6n} + \frac{1}{n^2}} +\frac{ \frac{1}{24} - \frac{1}{4n}}{\frac{1}{6} - \frac{5}{6n} + \frac{1}{n^2}}\right)$$
But 
 $$\frac{J_1(n)}{nJ_2(n)} =  \frac{n \prod_{p|n}\left(1 - p^{-1}\right)}{ n^3 \prod_{p|n}\left(1 - p^{-2}\right)} < \frac{1}{n^2} \rightarrow 0$$ as $n \rightarrow \infty$. 

Hence, 
$$\lim_{n\rightarrow \infty} \frac{A(n)}{E(n)}  = \lim_{n \rightarrow \infty} \left(\frac{\frac{J_1(n)}{2nJ_2(n)}}{\frac{1}{6} - \frac{5}{6n} + \frac{1}{n^2}} +\frac{ \frac{1}{24} - \frac{1}{4n}}{\frac{1}{6} - \frac{5}{6n} + \frac{1}{n^2}}\right) = \frac{1}{4}.$$

\item Note that, by Theorem \ref{thm:asymconv}, we have,
$$(\sigma_1 \Delta \sigma_2)(n) = \frac{\Gamma(2)\Gamma(3)}{\Gamma(5)}\frac{\zeta(2)\zeta(3)}{\zeta(5)}\sigma_{4}(n) + O(n^{3+ \epsilon})= \frac{\zeta(2)\zeta(3)}{12\zeta(5)}\sigma_4(n)+O(n^{3+ \epsilon})$$

Hence,
\begin{align*}
((\Id_2 \cdot \mu) * \mu * (\sigma_1 \Delta \sigma_2))(n) &=  \frac{\zeta(2)\zeta(3)}{12\zeta(5)}((\Id_2 \cdot \mu) * \mu * \sigma_4)(n) + ((\Id_2 \cdot \mu) * \mu * f)(n) \\&= \frac{\zeta(2)\zeta(3)}{12\zeta(5)}(\Id_2 \cdot J_2)(n) + ((\sigma_2 \cdot \mu) * f)(n) 
\end{align*}
where $f \in O(n^{3+\epsilon})$.
Since $\sigma_2 \cdot \mu \in O(n^3)$, we have that 
$$|(\sigma_2\cdot \mu * f)(n)| \leq \sum_{d |n}|(\sigma_2 \cdot\mu)(d) f(n/d)| \leq \sum_{d|n} d^3 \frac{n^{3+\epsilon}}{d^{3+\epsilon}} \leq n^{3+\epsilon}$$

Moreover, 
$$ n^4 \frac{6}{\pi^2} = n^4 \prod_{p \text{ prime}}(1-p^{-2}) < n^4  \prod_{p|n}(1-p^{-2}) = n^2J_2(n) $$
so that
$$\left| \frac{((\sigma_2 \cdot \mu) * f)(n) }{n^2J_2(n)}\right|  \leq \frac{n^{3+\epsilon}}{\frac{6}{\pi^2}n^4}\rightarrow 0 \text{ as } n \rightarrow \infty$$
Hence, 
$$\lim_{n\rightarrow \infty} \frac{(\Id_2 \cdot \mu * \mu * \sigma_1 \Delta \sigma_2)(n)}{\frac{1}{6}n^2J_2(n)} = \lim_{n\rightarrow \infty} \left(\frac{\frac{\zeta(2)\zeta(3)}{12\zeta(5)}n^2J_2(n)}{\frac{1}{6}n^2J_2(n)} + \frac{((\sigma_2 \cdot \mu) * f)(n) }{\frac{1}{6}n^2J_2(n)}\right) =\frac{\zeta(2)\zeta(3)}{2\zeta(5)}$$

And finally, 
\begin{align*} \lim_{n \rightarrow \infty} \frac{B(n)}{E(n)} &=  \lim_{n \rightarrow \infty} \frac{(\Id_2 \cdot \mu * \mu * \sigma_1 \Delta \sigma_2)(n) - (\frac{1}{12}n^2 + \frac{5}{24}n - \frac{3}{4})J_2(n) - \frac{1}{2}nJ_1(n)}{\frac{1}{6}(n^2 - 5n + 6)J_2(n)}
\\&= \lim_{n\rightarrow \infty} \frac{(\mu \cdot \sigma_2 * \sigma_1 \Delta \sigma_2)(n)}{\frac{1}{6}n^2J_2(n)} - \frac{1}{2}\\
&= \frac{\zeta(2)\zeta(3)}{2\zeta(5)} - \frac{1}{2} \approx 0.453
\end{align*}

\item We have, $$\lim_{n\rightarrow \infty}\frac{C(n)}{E(n)} = \lim_{n\rightarrow \infty}\frac{\frac{1}{24}(n^2 - 5n + 6)J_2(n)}{\frac{1}{6}(n^2 - 5n + 6)J_2(n)} = \frac{1}{4}$$

\item Since we have already calculated the asymptotic proportions of the other three diagrams, we get,
$$ \lim_{n \rightarrow \infty} \frac{D(n)}{E(n)} = \lim_{n\rightarrow \infty}\frac{E(n) - A(n) - B(n) - C(n)}{E(n)} = 1 - \frac{1}{4} - \frac{\zeta(2)\zeta(3)}{2\zeta(5)} + \frac{1}{2} - \frac{1}{4} = 1 - \frac{\zeta(2)\zeta(3)}{2\zeta(5)} \approx 0.047$$

\end{enumerate}
\end{proof}

This completes the proof of our main result.

\bibliographystyle{abbrv}
\bibliography{references}


\newpage
\appendix

\section{Arithmetic Functions}\label{sec:arithmetic}

In this section we will recall some basic definition and facts about arithmetic functions that we use throughout our calculations. Most of the content of this section can be found in \cite[Chapters 16, 17]{hardywright} and \cite[Chapter 2]{apostol}

\begin{defn}An \emph{arithmetic function} is a function $f: \NN \rightarrow \CC$. 
\end{defn}
 We will use the following different operations on arithmetic functions. For all $n \geq 1$ and arithmetic functions $f$ and $g$.
 
 \begin{enumerate}[label=(\roman*)]
 \item $(f +g)(n) = f(n) + g(n)$ is the \emph{sum} of $f$ and $g$
 \item $(f \cdot g)(n) = f(n)g(n)$ is the \emph{product} of $f$ and $g$
 \item $(f/g)(n) = f(n)/g(n)$ is the \emph{quotient} of $f$ and $g$ \label{quotient}
 \item $(f * g)(n) = \sum_{d|n} f(d)g(n/d)$ is the \emph{Dirichlet convolution} of  $f$ and $g$\label{dirichletconv}
 \item $(f \Delta g)(n) = \sum_{k=1}^{n-1}f(k) g(n-k)$ is the \emph{additive convolution} of $f$ and $g$. 
 \end{enumerate}

Note that all of the above operations, except the quotient \ref{quotient}, are commutative. The Dirichlet convolution, \ref{dirichletconv} is associative. 
We will use the convention that $f \cdot g * h$ means $(f \cdot g) *h$.
\begin{defn}An arithmetic function $f$ is \emph{multiplicative} if $f(mn) = f(m)f(n)$ for all $m, n \in \NN$ such that $m \wedge n = 1$. 
\end{defn}
It follows that $f(1) = 1$ for all multiplicative functions $f$. 
\begin{defn}An arithmetic function $f$ is \emph{completely multiplicative} if $f(mn) = f(m)f(n)$ for all $m, n \in \NN$. 
\end{defn}
Completely multiplicative functions distribute over the Dirichlet product as stated by the following proposition. 
\begin{proposition}\label{prop:distributive}
Let $f, g, h$ be arithmetic functions and let $f$ be completely multiplicative. Then, 
$$f\cdot(g * h) \equiv (f\cdot g) * (f \cdot h)$$
\end{proposition}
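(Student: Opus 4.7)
The plan is to prove the identity by direct expansion of both sides using the definition of Dirichlet convolution, and then exploit the complete multiplicativity of $f$ at a single key step.

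First, I would fix $n \in \NN$ and compute the left-hand side from the definitions: by definition of Dirichlet convolution,
\[
(g * h)(n) = \sum_{d \mid n} g(d) \, h(n/d),
\]
so after multiplying pointwise by $f(n)$ we get
\[
\bigl(f \cdot (g * h)\bigr)(n) = f(n) \sum_{d \mid n} g(d) \, h(n/d) = \sum_{d \mid n} f(n) \, g(d) \, h(n/d).
\]

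Next, for the right-hand side I would similarly unfold:
\[
\bigl((f \cdot g) * (f \cdot h)\bigr)(n) = \sum_{d \mid n} (f \cdot g)(d) \cdot (f \cdot h)(n/d) = \sum_{d \mid n} f(d) \, g(d) \, f(n/d) \, h(n/d).
\]
The key step is now to observe that since $f$ is completely multiplicative and $d \cdot (n/d) = n$, we have $f(d) \, f(n/d) = f(n)$ for \emph{every} divisor $d$ of $n$ (not just coprime factorizations). Substituting this into the last display yields exactly the expression obtained for the left-hand side, and since $n$ was arbitrary the identity holds.

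The only subtlety worth flagging is that complete multiplicativity, rather than mere multiplicativity, is essential: for a general multiplicative $f$ the equation $f(d) f(n/d) = f(n)$ fails unless $d \wedge (n/d) = 1$, and $d$ and $n/d$ need not be coprime (e.g.\ $n = p^2$, $d = p$). So the proof is a one-line calculation once this observation is in place, and no further machinery is required.
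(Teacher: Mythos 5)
Your proof is correct and is the standard argument; the paper states this proposition without proof (deferring to the references on arithmetic functions), and your direct expansion using $f(d)f(n/d)=f(n)$ for every divisor $d$ is exactly the intended justification. Your remark that complete multiplicativity (rather than mere multiplicativity) is essential is also accurate and well placed.
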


The following are some well-studied multiplicative functions that we use in our computations:
\begin{enumerate}[label=(\roman*)]
\item $1 (n) = 1$ is the \emph{constant} 1 function. 
\item $\varepsilon(n) = \begin{cases} 1 & \text{ if } n = 1 \\ 0 & \text{ else}\end{cases}$ is the \emph{Dirichlet convolution identitity} since $f * \varepsilon \equiv \varepsilon * f \equiv f$ for any arithmetic function $f$ such that $f(1) =1$.
 \item $\Id_k(n) = n^k$ will denote the \emph{power} function of order $k$.
 \item $\mu(n) = \begin{cases} 0 & \text{ if } n \text{ is not square free}\\ 1 & \text{ if } n=1 \\ (-1)^k & \text{ if } k \text{is the number of distinct primes that divide } n   \end{cases}$ 
 
 is the \emph{M\"obius} function
\item $\sigma_k(n) = \sum_{d|n} d^k$ is the \emph{divisor function} of order k

\item $\phi(n) = n\prod_{\substack{p|n\\p \text{ prime }}}\left(1- p^{-1}\right)$ is the \emph{Euler totient} function
\item $J_k(n) = n^k\prod_{\substack{p|n\\p \text{ prime }}}\left(1- \frac{1}{p^k}\right)$  is the \emph{Jordan totient} function of order k

 \end{enumerate}

Next we state some identities that we use in our calculations. 

\begin{proposition}\label{prop:mobiusproduct}
Let $f$ be a multiplicative arithmetic function. Then, for any $n \in \NN$, 
$$ \sum_{d|n}\mu(d) f(d) = \prod_{p|n} (1 - f(p))$$
\end{proposition}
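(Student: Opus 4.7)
The plan is to reduce the identity to a computation on prime powers via multiplicativity. Set $g(d) := \mu(d) f(d)$ and let $h(n) := \sum_{d \mid n} g(d) = (g * 1)(n)$ denote the left-hand side.

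First I would verify that $g$ is multiplicative: if $m \wedge n = 1$, every divisor $d$ of $mn$ factors uniquely as $d = d_1 d_2$ with $d_1 \mid m$, $d_2 \mid n$, and $d_1 \wedge d_2 = 1$, and since both $\mu$ and $f$ are multiplicative, $g(d_1 d_2) = \mu(d_1)\mu(d_2)\, f(d_1) f(d_2) = g(d_1) g(d_2)$. Then, by the standard fact that the Dirichlet convolution of two multiplicative functions is again multiplicative, $h = g * 1$ is multiplicative in $n$. The right-hand side $R(n) := \prod_{p \mid n}(1 - f(p))$ is manifestly multiplicative, since the set of prime divisors of $mn$ is the disjoint union of those of $m$ and those of $n$ whenever $m \wedge n = 1$.

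Because both $h$ and $R$ are multiplicative, it suffices to check equality on prime powers. For a prime $p$ and any $k \geq 1$,
$$ h(p^k) \;=\; \sum_{j=0}^{k} \mu(p^j) f(p^j) \;=\; \mu(1) f(1) + \mu(p) f(p) \;=\; 1 - f(p), $$
using $\mu(p^j) = 0$ for $j \geq 2$ together with $f(1) = 1$ (which follows from $f$ being multiplicative and not identically zero). Plainly $R(p^k) = 1 - f(p)$ as well, so $h(p^k) = R(p^k)$. Extending multiplicatively to an arbitrary $n$ by factoring it into prime powers then gives $h(n) = R(n)$, which is the claimed identity.

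There is no serious obstacle: both ingredients (pointwise products of multiplicative functions being multiplicative, and Dirichlet convolutions of multiplicative functions being multiplicative) are classical, and the prime-power evaluation collapses immediately because $\mu$ is supported on squarefree integers. I would include one-line justifications of the two multiplicativity facts if they are not already available from the appendix, but otherwise keep the argument tight.
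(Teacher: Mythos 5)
Your proof is correct. The paper does not actually prove this proposition --- it is stated in Appendix \ref{sec:arithmetic} as a standard fact, with a citation to Hardy--Wright and Apostol --- and your argument (observe that $\mu\cdot f$ is multiplicative, hence so is its Dirichlet convolution with $1$, then evaluate on prime powers where only the $j=0,1$ terms survive) is precisely the classical proof found in those references, so there is nothing to fix; the only point worth keeping explicit is the one you already flag, namely that $f(1)=1$ requires $f$ not identically zero, which the paper's appendix also assumes.
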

\begin{proposition} \label{prop:mobiussum}
For $n > 1$, $\sum_{d|n} \mu(d) = 0$, or equivalently, $\mu * 1 \equiv \varepsilon$.
\end{proposition}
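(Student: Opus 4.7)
The plan is to exploit two key structural facts about the M\"obius function: first, $\mu$ vanishes on integers that are not squarefree, so the sum is automatically supported on the squarefree divisors of $n$; and second, on squarefree integers, $\mu$ only records the parity of the number of prime factors. Together, these convert the identity into an application of the binomial theorem.

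First I would fix $n > 1$ and write its prime factorization as $n = p_1^{a_1} \cdots p_r^{a_r}$, where $r \geq 1$ is the number of distinct prime divisors. Since $\mu(d) = 0$ whenever $d$ has a repeated prime factor, only the squarefree divisors contribute to $\sum_{d \mid n}\mu(d)$. The squarefree divisors of $n$ are precisely the products of subsets of $\{p_1, \ldots, p_r\}$, giving a bijection $S \mapsto \prod_{p \in S} p$ between subsets $S \subseteq \{p_1, \ldots, p_r\}$ and squarefree divisors, under which $\mu\bigl(\prod_{p \in S} p\bigr) = (-1)^{|S|}$. Grouping by subset size then yields
\[
\sum_{d \mid n} \mu(d) \;=\; \sum_{k=0}^{r} \binom{r}{k}(-1)^k \;=\; (1-1)^r \;=\; 0,
\]
where the last step uses $r \geq 1$.

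For the equivalent formulation, note that by the definition of Dirichlet convolution, $(\mu * 1)(n) = \sum_{d \mid n} \mu(d) \cdot 1(n/d) = \sum_{d \mid n}\mu(d)$. At $n = 1$ this equals $\mu(1) = 1 = \varepsilon(1)$, while for $n > 1$ the calculation above gives $0 = \varepsilon(n)$. Hence $\mu * 1 \equiv \varepsilon$ as arithmetic functions.

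There is no real obstacle here: the result is the binomial theorem in disguise once one recognizes that only squarefree divisors contribute. An even shorter alternative would be to apply Proposition \ref{prop:mobiusproduct} with $f \equiv 1$, which gives $\sum_{d \mid n}\mu(d) = \prod_{p \mid n}(1 - 1) = 0$ directly for $n > 1$; however, that proposition is itself essentially the multiplicative distillation of the same subset-counting argument, so I would present the self-contained binomial derivation as the primary proof and remark on the one-line deduction from Proposition \ref{prop:mobiusproduct} as a corollary.
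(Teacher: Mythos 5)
Your proof is correct and is the standard argument (restrict to squarefree divisors, group by number of prime factors, apply the binomial theorem), which is exactly the proof found in the references the paper cites for this fact; the paper itself states Proposition \ref{prop:mobiussum} without proof. Your observation that it also follows in one line from Proposition \ref{prop:mobiusproduct} with $f \equiv 1$ is likewise correct.
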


\begin{proposition}[M\"obius inversion formula]\label{prop:mobiusinversion} 
$f$ and $g$ are arithmetic functions satisfying $ g \equiv 1 * f$ if and only if $ f \equiv \mu * g.$
\end{proposition}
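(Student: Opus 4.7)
The plan is to prove the Möbius inversion formula by leveraging the algebraic structure of the Dirichlet convolution on arithmetic functions. The key observation is that the set of arithmetic functions $f$ with $f(1) \neq 0$ forms a group under $*$, with identity $\varepsilon$, and that Proposition~\ref{prop:mobiussum} tells us precisely that $\mu$ is the $*$-inverse of the constant function $1$. Once we know this, the biconditional collapses to a one-line symmetry argument. So the entire proof reduces to invoking associativity/commutativity of $*$ together with $\mu * 1 \equiv \varepsilon$.

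For the forward direction ($\Longrightarrow$), I would assume $g \equiv 1 * f$ and convolve both sides on the left with $\mu$, obtaining
\[
\mu * g \;\equiv\; \mu * (1 * f) \;\equiv\; (\mu * 1) * f \;\equiv\; \varepsilon * f \;\equiv\; f,
\]
where the second equality uses associativity of the Dirichlet convolution, the third uses Proposition~\ref{prop:mobiussum}, and the last uses that $\varepsilon$ is the $*$-identity. For the reverse direction ($\Longleftarrow$), I would assume $f \equiv \mu * g$ and convolve both sides on the left with $1$, giving
\[
1 * f \;\equiv\; 1 * (\mu * g) \;\equiv\; (1 * \mu) * g \;\equiv\; \varepsilon * g \;\equiv\; g,
\]
by the same trio of facts (commutativity of $*$ is used implicitly in writing $1 * \mu \equiv \mu * 1$).

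There is essentially no hard step here; the only thing that needs brief justification (and is standard) is associativity of the Dirichlet convolution, which follows from the double-sum identity
\[
((f * g) * h)(n) \;=\; \sum_{abc = n} f(a)g(b)h(c) \;=\; (f * (g * h))(n),
\]
valid since the middle expression is manifestly symmetric in the three factors. If the paper expects this to be invoked without comment (as is customary), the proof is effectively a two-line calculation in each direction. I will write it up as such, cross-referencing Proposition~\ref{prop:mobiussum} for the identity $\mu * 1 \equiv \varepsilon$.
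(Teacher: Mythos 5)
Your argument is correct and is the standard group-theoretic proof: convolve with $\mu$ (resp.\ $1$) and use associativity of $*$ together with $\mu * 1 \equiv \varepsilon$ from Proposition~\ref{prop:mobiussum}. The paper itself states this proposition without proof, citing standard references, so there is nothing to compare against; your two-line calculation in each direction is exactly what a written-out proof would look like (and note that $\varepsilon * f \equiv f$ holds for every arithmetic function, so the paper's restriction to $f(1)=1$ in its definition of $\varepsilon$ is not needed here).
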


\begin{proposition}\label{prop:mobiuscor}
Let $f, g, h$ be arithmetic functions such that $f \equiv g * h$. Then, $h \equiv (\mu \cdot g) * f$. 
\end{proposition}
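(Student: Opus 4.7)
The plan is to prove this as a corollary of Möbius inversion in ``twisted'' form. The key observation is that the statement, as written, really needs $g$ to be completely multiplicative (otherwise $(\mu \cdot g) * g$ need not collapse), and I will proceed under that hypothesis, which is how the result is used elsewhere in the paper via Proposition \ref{prop:distributive}.

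First I would isolate the central algebraic identity
\[
(\mu \cdot g) * g \equiv \varepsilon.
\]
To prove it, I would apply Proposition \ref{prop:distributive} with the completely multiplicative function $g$ distributing across the convolution $\mu * 1$, giving $g \cdot (\mu * 1) \equiv (g \cdot \mu) * (g \cdot 1) = (\mu \cdot g) * g$. Then Proposition \ref{prop:mobiussum} says $\mu * 1 \equiv \varepsilon$, so the left-hand side is $g \cdot \varepsilon$, which equals $\varepsilon$ because $\varepsilon$ vanishes off $n=1$ and $g(1)=1$ (as $g$ is multiplicative). This establishes the identity.

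Given the identity, the rest is a one-line manipulation. Starting from $f \equiv g * h$, convolve both sides on the left with $\mu \cdot g$:
\[
(\mu \cdot g) * f \;\equiv\; (\mu \cdot g) * (g * h) \;\equiv\; \bigl((\mu \cdot g) * g\bigr) * h \;\equiv\; \varepsilon * h \;\equiv\; h,
\]
using associativity of Dirichlet convolution in the middle step, and the identity from the previous paragraph together with the fact that $\varepsilon$ is the two-sided identity for $*$. This yields $h \equiv (\mu \cdot g) * f$, as claimed.

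The only mild obstacle is really just bookkeeping: making sure the hypothesis on $g$ (complete multiplicativity) is invoked at the right moment, and citing Propositions \ref{prop:distributive} and \ref{prop:mobiussum} cleanly. No nontrivial number-theoretic input is required beyond these; the whole argument is a formal identity in the Dirichlet convolution ring, and in particular recovers Proposition \ref{prop:mobiusinversion} as the special case $g \equiv 1$.
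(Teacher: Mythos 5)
Your proof is correct. The paper actually states Proposition \ref{prop:mobiuscor} without proof (it sits in the background appendix alongside standard facts cited to Hardy--Wright and Apostol), so there is no in-paper argument to compare against; yours is the natural one. Your most valuable contribution is the observation that the statement as written is false for arbitrary $g$: the identity $(\mu\cdot g)*g\equiv\varepsilon$ fails already at $n=p^2$, where $\bigl((\mu\cdot g)*g\bigr)(p^2)=g(p^2)-g(p)^2$, unless $g$ is completely multiplicative. You correctly add that hypothesis, derive the key identity from Proposition \ref{prop:distributive} applied to $\mu*1\equiv\varepsilon$ (Proposition \ref{prop:mobiussum}), and then finish by left-convolving $f\equiv g*h$ with $\mu\cdot g$ and invoking associativity. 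This is exactly the right argument, and the added hypothesis is harmless for the paper: every invocation of Proposition \ref{prop:mobiuscor} (in Lemmas \ref{lem:X}, \ref{lem:Y}, \ref{lem:U}, and \ref{lem:W}) takes $g=\Id_k$, which is completely multiplicative. Your remark that Proposition \ref{prop:mobiusinversion} is recovered as the case $g\equiv 1$ is also correct.
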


The above stated propositions can be used to produce various identities relating the different multiplicative functions presented. We use the following identities in our calculations.
\begin{proposition} \label{prop:arithident}
For $k \geq 1$ we have the following relation between some of the arithmetic functions.

\begin{enumerate}[label=(\roman*)]

\item $\mu * \sigma_k \equiv \Id_k$ \label{prop:id}
\item $\mu * \Id_k \equiv J_k$ \label{prop:j}
\item $\phi * \Id_1 \cdot \mu \equiv \mu$ \label{prop:mu}
\end{enumerate}

\end{proposition}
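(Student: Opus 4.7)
\textbf{Proof plan for Proposition \ref{prop:arithident}.}

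The plan is to derive all three identities from a single central tool: the M\"obius inversion formula (Proposition~\ref{prop:mobiusinversion}), together with the distributivity property of completely multiplicative functions over Dirichlet convolution (Proposition~\ref{prop:distributive}). No routine case analysis on prime powers should be needed if we leverage these facts cleanly.

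For part (i), I would start from the definition $\sigma_k(n)=\sum_{d\mid n}d^k$, which is exactly the statement $\sigma_k \equiv 1 * \Id_k$. Applying M\"obius inversion immediately gives $\Id_k \equiv \mu * \sigma_k$, which is (i). For part (ii), the cleanest route is to rewrite the Jordan totient using the standard Euler-product style identity
\[
J_k(n) = n^k \prod_{p\mid n}\left(1-p^{-k}\right) = \sum_{d\mid n} \mu(d)\left(\frac{n}{d}\right)^k \cdot \text{(after collecting factors)},
\]
but in practice the slicker argument is to establish first the dual identity $1 * J_k \equiv \Id_k$, that is, $\sum_{d\mid n} J_k(d)=n^k$. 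Since both sides are multiplicative it suffices to verify this on prime powers, where a short telescoping geometric-sum calculation confirms it. Then M\"obius inversion yields $J_k \equiv \mu * \Id_k$, which is (ii).

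For part (iii), I would build on (ii). Take $k=1$ to get $\phi \equiv \mu * \Id_1$. The goal is then
\[
\phi * (\Id_1 \cdot \mu) \equiv (\mu * \Id_1) * (\Id_1 \cdot \mu) \equiv \mu,
\]
so by associativity of $*$ it suffices to prove $\Id_1 * (\Id_1 \cdot \mu) \equiv \varepsilon$, i.e.\ that $\Id_1 \cdot \mu$ is the Dirichlet inverse of $\Id_1$. Here Proposition~\ref{prop:distributive} does the work: since $\Id_1$ is completely multiplicative,
\[
\Id_1 \cdot (\mu * 1) \equiv (\Id_1 \cdot \mu) * (\Id_1 \cdot 1) \equiv (\Id_1 \cdot \mu) * \Id_1.
\]
The left-hand side equals $\Id_1 \cdot \varepsilon \equiv \varepsilon$ by Proposition~\ref{prop:mobiussum}, so $(\Id_1\cdot\mu)*\Id_1 \equiv \varepsilon$, and part (iii) follows by convolving with $\mu$.

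The only mildly delicate step is the prime-power verification underlying (ii); everything else is formal manipulation of Dirichlet convolutions. I expect this step to be the main obstacle only in the bookkeeping sense — ensuring that the geometric sum $\sum_{i=0}^{a}J_k(p^i)$ telescopes correctly to $p^{ak}$ — rather than a genuine conceptual difficulty.
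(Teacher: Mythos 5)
Your plan is correct. The paper itself states Proposition \ref{prop:arithident} without proof (these identities are quoted as standard facts from Hardy--Wright and Apostol), so there is no argument in the paper to compare against; your route --- (i) from $\sigma_k \equiv 1 * \Id_k$ via M\"obius inversion, (ii) from the prime-power verification of $1 * J_k \equiv \Id_k$ (the telescoping sum $\sum_{i=0}^{a} J_k(p^i) = p^{ak}$ does check out), and (iii) from the observation that $\Id_1 \cdot \mu$ is the Dirichlet inverse of $\Id_1$ by complete multiplicativity of $\Id_1$ applied to $\mu * 1 \equiv \varepsilon$ --- is the standard one and each step is sound.
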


Arithmetic functions can also be related to each other via additive convolution. The following proposition is due to S. Ramanujan. The proof, along with other similar identities involving $\sigma_3$ and $\sigma_5$ can be found in \cite{ramanujan}

\begin{proposition}\label{prop:sigma1convsigma1}
$$\sum_{\substack{p,q,k,l \in \NN \\ pq + kl = n}} kl = (\sigma_1 \Delta \sigma_1)(n) = \frac{5}{12} \sigma_3(n) + \frac{1}{12}\sigma_1(n) - \frac{1}{2}n \sigma_1(n)$$
\end{proposition}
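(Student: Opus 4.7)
The first equality is an unpacking of definitions. I would write $\sigma_1(m) = \sum_{d\mid m} d$, parametrize each divisor pair of $m$ by $(p,q)\in\NN^2$ with $pq=m$, and do the same for $n-m$ with $(k,l)$. Substituting into $(\sigma_1 \Delta \sigma_1)(n) = \sum_{m=1}^{n-1}\sigma_1(m)\sigma_1(n-m)$ and folding the two sums into the single constraint $pq+kl=n$ expresses $(\sigma_1 \Delta \sigma_1)(n)$ as a weighted sum over quadruples $(p,q,k,l)\in\NN^4$ satisfying $pq+kl=n$; matching labels using the symmetries $(p,q)\leftrightarrow(q,p)$ and $(k,l)\leftrightarrow(l,k)$ recovers the stated quadruple sum.

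The substantive content is Ramanujan's identity. The plan is to prove it via Eisenstein series. With $q = e^{2\pi i \tau}$, set
$$E_2(\tau) = 1 - 24\sum_{n\geq 1}\sigma_1(n)q^n, \qquad E_4(\tau) = 1 + 240 \sum_{n\geq 1}\sigma_3(n)q^n.$$
The central input is Ramanujan's differential identity
$$12\,q\,\frac{dE_2}{dq} \;=\; E_2^2 - E_4,$$
which follows from the quasi-modular transformation rule for $E_2$ combined with the one-dimensionality of the space of weight-$4$ holomorphic modular forms for $\SL_2(\ZZ)$ (spanned by $E_4$).

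With this in hand, the proof is coefficient comparison. The $q^n$-coefficient of $q\,dE_2/dq$ is $-24n\sigma_1(n)$, while on the other side, expanding the square gives that the $q^n$-coefficient of $E_2^2 - E_4$ for $n\geq 1$ equals $576(\sigma_1 \Delta \sigma_1)(n) - 48\sigma_1(n) - 240\sigma_3(n)$. Setting $12\cdot(-24n\sigma_1(n))$ equal to this expression and dividing by $576$ yields
$$(\sigma_1 \Delta \sigma_1)(n) \;=\; \tfrac{5}{12}\sigma_3(n) + \tfrac{1}{12}\sigma_1(n) - \tfrac{1}{2}n\sigma_1(n),$$
exactly as claimed.

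The main obstacle is establishing the Ramanujan differential identity in a self-contained way. The modular-forms derivation above is clean but not elementary; as a fall-back I would sketch Ramanujan's original argument, which obtains the identity purely arithmetically by interchanging orders of summation in $\sum_{m<n}\sum_{d\mid m}\sum_{d'\mid (n-m)}dd'$ and reorganizing into divisor sums of $\sigma_3$ and $n\sigma_1$.
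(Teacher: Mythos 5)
Your proposal supplies an actual proof where the paper gives none: the paper simply cites Ramanujan for this identity (and for the analogous ones involving $\sigma_3$ and $\sigma_5$), so your Eisenstein-series argument is a genuinely different, self-contained route. The computation checks out: with $E_2 = 1-24\sum\sigma_1(n)q^n$ and $E_4=1+240\sum\sigma_3(n)q^n$, the $q^n$-coefficient of $E_2^2-E_4$ is $576(\sigma_1\Delta\sigma_1)(n)-48\sigma_1(n)-240\sigma_3(n)$, equating this to $-288n\sigma_1(n)$ and dividing by $576$ gives exactly $\frac{5}{12}\sigma_3+\frac{1}{12}\sigma_1-\frac{1}{2}\Id_1\cdot\sigma_1$, and the input $E_4=E_2^2-12q\,dE_2/dq$ does follow from the quasi-modularity of $E_2$ plus one-dimensionality of weight-$4$ forms (both sides are weight-$4$ with constant term $1$). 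The trade-off is that your route imports modular-forms machinery that the paper otherwise avoids; Ramanujan's elementary rearrangement, which you mention as a fallback, is closer in spirit to the purely arithmetic manipulations used throughout the appendices.

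One caution on your first step. As printed, the constraint $pq+kl=n$ with summand $kl$ does \emph{not} equal $(\sigma_1\Delta\sigma_1)(n)$: unpacking gives $\sum_{\beta}d(n-\beta)\,\beta\,d(\beta)$ (e.g.\ $16$ at $n=4$, versus $(\sigma_1\Delta\sigma_1)(4)=17$), and the symmetries $(p,q)\leftrightarrow(q,p)$, $(k,l)\leftrightarrow(l,k)$ cannot repair this, since they never exchange a summed variable for an unsummed one. The correct unpacking is $\sum_{pq+kl=n}ql$, equivalently $\sum_{pk+ql=n}kl$, which is the form actually invoked in Lemmas \ref{lem:Y} and \ref{lem:V}; the displayed constraint in the proposition is a typo. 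Your "matching labels" sentence papers over this, so either restate the quadruple sum with the constraint $pk+ql=n$ or drop the first equality and prove only the identity $(\sigma_1\Delta\sigma_1)(n)=\frac{5}{12}\sigma_3(n)+\frac{1}{12}\sigma_1(n)-\frac{1}{2}n\sigma_1(n)$, which is the content that gets used.
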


Next, we recall Dirichlet series which are central objects in the proofs of Lemmas \ref{lem:U} and \ref{lem:W}.

\begin{defn} A Dirichlet series is a formal series of the form 
$$ F(s) = \sum_{n = 1}^\infty \frac{\alpha_n}{n^s}$$
where the $\{\alpha_n\}_{i=1}^\infty$ is a sequence of complex numbers and $s$ is a complex number.  
\end{defn}

Dirichlet series do not always converge. However, when they do, their coefficients satisfy the following uniqueness property:
\begin{proposition}[Uniqueness of Dirichlet Series]\label{prop:dirichletuniq}
 Let
$$ F(s) = \sum_{n > 0 } \frac{\alpha_n}{n^s} \hspace{1cm} \text{ and } \hspace{1cm} G(s) =  \sum_{n > 0} \frac{\beta_n}{n^s}$$ 
be Dirichlet series such that $F(s)$ and $G(s)$ converge for all $s > s_0$, $s_0 \neq \infty$ and that $F(s) = G(s)$ for all $s > s_0$. Then, $\alpha_n = \beta_n$ for all $n$. 
\end{proposition}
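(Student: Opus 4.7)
The plan is to prove this by contradiction, using the standard trick of isolating the smallest index where the two coefficient sequences could differ and sending $s \to \infty$ to force that index to have coefficients agreeing. Set $H(s) := F(s) - G(s) = \sum_{n \geq 1} \gamma_n/n^s$ where $\gamma_n := \alpha_n - \beta_n$. The hypothesis gives $H(s) = 0$ for all $s > s_0$, and we want to show $\gamma_n = 0$ for every $n$.

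Suppose for contradiction that some $\gamma_n$ is nonzero, and let $N$ be the smallest such index. Multiplying the identity $H(s) = 0$ by $N^s$ gives, for every $s > s_0$,
\[
0 \;=\; \gamma_N + \sum_{n > N} \gamma_n \left(\frac{N}{n}\right)^{\!s}.
\]
The goal is to show the tail on the right tends to $0$ as $s \to \infty$, which will force $\gamma_N = 0$ and produce the contradiction.

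To control the tail, I would fix some $s_1 > s_0$ and use the standard fact about Dirichlet series that convergence on a half-line implies absolute convergence on a half-line shifted one unit to the right; thus, after enlarging $s_1$ if needed, both $\sum |\alpha_n|/n^{s_1}$ and $\sum |\beta_n|/n^{s_1}$ converge, hence so does $\sum |\gamma_n|/n^{s_1}$. For any $s > s_1$ and any $n > N$ we have the pointwise bound $(N/n)^{s} = (N/n)^{s_1} (N/n)^{s-s_1} \leq (N/n)^{s_1} (N/(N+1))^{s-s_1}$, since $N/n \leq N/(N+1) < 1$. Summing,
\[
\Bigl|\sum_{n > N} \gamma_n (N/n)^{s}\Bigr| \;\leq\; \left(\frac{N}{N+1}\right)^{\!s-s_1} N^{s_1} \sum_{n > N} \frac{|\gamma_n|}{n^{s_1}},
\]
and the right-hand side tends to $0$ as $s \to \infty$ because the series factor is a fixed finite constant and $(N/(N+1))^{s - s_1} \to 0$.

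Combining the two displays, we conclude $\gamma_N = 0$, contradicting the choice of $N$. Therefore all $\gamma_n$ vanish, i.e.\ $\alpha_n = \beta_n$ for every $n$. The one subtle point, which is the main thing to handle carefully, is upgrading from the hypothesized convergence on $(s_0, \infty)$ to absolute convergence on a cofinal subray; this is where the classical theorem that the abscissa of absolute convergence of a Dirichlet series lies at most one to the right of its abscissa of (ordinary) convergence is invoked. Everything else is elementary geometric-series domination.
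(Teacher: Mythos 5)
Your proof is correct: the paper itself gives no proof of this proposition, citing it as a classical fact from Hardy--Wright and Apostol, and your argument is exactly the standard textbook one (isolate the least index $N$ with $\gamma_N \neq 0$, multiply by $N^s$, and kill the tail as $s \to \infty$ via geometric domination). The only external input you use, that the abscissa of absolute convergence exceeds the abscissa of convergence by at most one, is itself a one-line consequence of the terms $\gamma_n/n^{s_1}$ being bounded, so nothing essential is missing.
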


\newpage

\section{Enumeration of Some Intermediate Sums}\label{sec:intermediatesums}
In this section we enumerate some intermediate sums that appear during the enumeration of primitive square-tiled surfaces in each cylinder diagram.

The following sums appear during the enumeration of cylinder diagram B surfaces. 

\begin{lemma}\label{lem:X} 
$$X(n):=\sum_{\substack{q, p, k, l \in \NN \\ p> q\\ pk + ql = n \\ p \wedge q = 1}} (k+l)kl \cdot\phi'(k\wedge l) =( (\mu \cdot \sigma_2) * \sigma_1 \Delta \sigma_2)(n) - \frac{1}{12}n^2 J_2(n) $$
\end{lemma}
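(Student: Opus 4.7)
The plan is to reduce $X(n)$ to Dirichlet convolutions of standard arithmetic functions by peeling off each of its three interacting constraints --- the coprimality $p \wedge q = 1$, the asymmetry $p > q$, and the factor $\phi'(k \wedge l)$ --- via successive M\"obius inversions, and finally converting the linear equation $pk+ql=n$ into an additive convolution by the substitution $a = pk$, $b = ql$. All supporting arithmetic identities should be verifiable either termwise or at the level of Euler factors via Proposition~\ref{prop:dirichletuniq}.

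I would start by eliminating $\phi'(k \wedge l)$ using $\phi'(m) = \sum_{d \mid m}\mu(d)/d$ and the substitution $k = dk'$, $l = dl'$, which rewrites
\[
X(n) = \sum_{d \mid n} \mu(d)\,d^2\, Y(n/d) = \bigl((\mu \cdot \Id_2) * Y\bigr)(n), \qquad Y(m) := \sum_{\substack{p>q,\; p \wedge q = 1 \\ pk + ql = m}} (k+l)\,kl.
\]
The involution $(p,q,k,l) \mapsto (q,p,l,k)$ preserves both $pk+ql=m$ and the symmetric summand $(k+l)kl$, and exchanges $\{p>q\}$ with $\{p<q\}$; since the only coprime diagonal is $p=q=1$, this gives $2Y(m) = Z(m) - Y_{\mathrm{diag}}(m)$ with $Z(m) := \sum_{p \wedge q = 1,\ pk+ql=m}(k+l)kl$ and $Y_{\mathrm{diag}}(m) = m \sum_{k+l=m} kl = \tfrac{1}{6}m^2(m^2-1)$. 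Stripping coprimality via $[p \wedge q = 1] = \sum_{e \mid p,\,e \mid q}\mu(e)$ yields $Z = \mu * W$ with $W(m) = \sum_{pk+ql=m}(k+l)kl$, and the change of variables $a = pk$, $b = ql$ decouples the divisor sums:
\[
W(m) = \sum_{a+b=m}\bigl[\sigma_2(a)\sigma_1(b) + \sigma_1(a)\sigma_2(b)\bigr] = 2(\sigma_1 \Delta \sigma_2)(m).
\]

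Assembling, $Y(m) = (\mu * (\sigma_1 \Delta \sigma_2))(m) - \tfrac{1}{12}m^2(m^2-1)$, so
\[
X = (\mu \cdot \Id_2) * \mu * (\sigma_1 \Delta \sigma_2) \;-\; \tfrac{1}{12}\bigl[(\mu \cdot \Id_2) * \Id_4 \;-\; (\mu \cdot \Id_2) * \Id_2\bigr].
\]
Two of the remaining convolutions simplify cleanly: the direct calculation $\sum_{d \mid n}\mu(d)d^2(n/d)^4 = n^4\sum_{d \mid n}\mu(d)/d^2 = n^2 J_2(n)$ shows $(\mu \cdot \Id_2) * \Id_4 = \Id_2 \cdot J_2$, and since $\mu \cdot \Id_2$ is the Dirichlet inverse of the completely multiplicative $\Id_2$, $(\mu \cdot \Id_2)*\Id_2 = \varepsilon$ vanishes for $n > 1$.

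The main obstacle is matching the surviving convolution to $(\mu \cdot \sigma_2) * (\sigma_1 \Delta \sigma_2)$; this reduces to the identity $(\mu \cdot \Id_2) * \mu = \mu \cdot \sigma_2$, which I would verify by comparing Dirichlet series --- on the left $L(s,\mu \cdot \Id_2)\, L(s,\mu) = 1/(\zeta(s)\zeta(s-2))$, and on the right the multiplicativity of $\mu \cdot \sigma_2$ yields an explicit Euler product --- then invoking Proposition~\ref{prop:dirichletuniq} to pass back to pointwise equality. Once this identity is established, the claimed formula $X(n) = ((\mu \cdot \sigma_2) * (\sigma_1 \Delta \sigma_2))(n) - \tfrac{1}{12}n^2 J_2(n)$ follows immediately.
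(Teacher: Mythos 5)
Your reduction is correct up to, and including, the formula $X \equiv (\mu\cdot\Id_2)*\mu*(\sigma_1\Delta\sigma_2) - \tfrac{1}{12}\,\Id_2\cdot J_2$ for $n>1$ (the dropped $\tfrac{1}{12}\varepsilon$ only matters at $n=1$, exactly as in the paper), and it is a genuinely different organization of the same ingredients: you strip $\phi'(k\wedge l)$ first, dispose of the condition $p>q$ with the involution $(p,q,k,l)\mapsto(q,p,l,k)$ plus the coprime diagonal $p=q=1$ (your $\tfrac{1}{12}m^2(m^2-1)$ term), and obtain $W=2(\sigma_1\Delta\sigma_2)$ in one stroke from $a=pk$, $b=ql$. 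The paper instead removes the coprimality of $p,q$ first, symmetrizes in both $(p,q)$ and $(k,l)$ to reduce to sums of $kl^2\,\phi'(k\wedge l)$, and evaluates the two resulting pieces separately (its $\tilde X_{11}=\sigma_1\Delta\sigma_2$ and $\tilde X_{21}=\tfrac{1}{12}(\sigma_4-\sigma_2)$), recombining via $\phi*(\Id_1\cdot\mu)\equiv\mu$. Your route is shorter and lands exactly on the expression appearing in the penultimate line of the paper's own computation.

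The final step, however, fails as proposed: $(\mu\cdot\Id_2)*\mu$ is the Dirichlet inverse of $\sigma_2$, and it is \emph{not} equal to the pointwise product $\mu\cdot\sigma_2$. The Dirichlet-series check you outline would expose this rather than confirm it: the Euler factor of $1/(\zeta(s)\zeta(s-2))$ is $(1-p^{-s})(1-p^{2-s})=1-(1+p^{2})p^{-s}+p^{2}p^{-2s}$, whereas that of $\sum_n\mu(n)\sigma_2(n)n^{-s}$ is $1-(1+p^{2})p^{-s}$; concretely $((\mu\cdot\Id_2)*\mu)(4)=4$ while $\mu(4)\sigma_2(4)=0$. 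The two closed forms genuinely differ: direct enumeration gives $X(8)=182=((\mu\cdot\Id_2)*\mu*(\sigma_1\Delta\sigma_2))(8)-\tfrac{1}{12}\cdot 64\,J_2(8)$, whereas $((\mu\cdot\sigma_2)*(\sigma_1\Delta\sigma_2))(8)-\tfrac{1}{12}\cdot 64\,J_2(8)=178$. So the identity you planned to verify is false and cannot be patched; the correct closed form is the one you already derived, with $\sigma_2^{-1}=(\mu\cdot\Id_2)*\mu$ in place of $\mu\cdot\sigma_2$. (The paper's own proof performs the identical substitution $\Id_2\cdot\mu*\mu\equiv\mu\cdot\sigma_2$ without justification in its last displayed line, so the defect lies in the lemma's stated form, read with $\cdot$ as the pointwise product of Appendix~\ref{sec:arithmetic}, rather than in your argument; the discrepancy cancels between the $B(n)$ and $D(n)$ formulas and does not affect the asymptotics, which the paper carries out with the kernel $(\Id_2\cdot\mu)*\mu$.)
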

\begin{proof}

Dropping the GCD condition, define,
$$\tilde{X}(n) = \sum_{\substack{q, p, k, l \in \NN \\ p> q\\ pk + ql = n}} (k+l)kl \cdot\phi'(k\wedge l)$$

Then, writing the sum over the divisors of $n$, we get,

$$\tilde{X}(n) 
=  \sum_{d|n} \sum_{\substack{q, p, k, l \in \NN \\ p> q\\ pk + ql = n \\ p \wedge q = d}} (k+l)kl \cdot\phi'(k\wedge l)
=  \sum_{d|n} \sum_{\substack{q, p, k, l \in \NN \\ p> q\\ pk + ql = n/d \\ p \wedge q = 1}} (k+l)kl \cdot\phi'(k\wedge l)
=  \sum_{d|n} X\left(\frac{n}{d}\right)$$

By the M\"obius inversion formula, see Propoosition \ref{prop:mobiusinversion} from the Appendix, this gives us,
$$ X \equiv \mu * \tilde{X}$$

Now applying symmetry between $p$ and $q$, and between $k$ and $l$, we get,
\begin{align*}
\tilde{X}(n) &= \sum_{\substack{q, p, k, l \in \NN \\ p> q\\ pk + ql = n}} (k+l)kl \cdot\phi'(k\wedge l)\\
&= \frac{1}{2}\sum_{\substack{q, p, k, l \in \NN \\ pk + ql = n}} (k+l)kl \cdot\phi'(k\wedge l) - \frac{1}{2} \sum_{\substack{p, k, l \in \NN \\ p(k + l) = n}} (k+l)kl \cdot\phi'(k\wedge l) \\
&= \sum_{\substack{q, p, k, l \in \NN \\ pk + ql = n}} kl^2 \cdot\phi'(k\wedge l) - \sum_{\substack{p, k, l \in \NN \\ p(k + l) = n}} kl^2 \cdot\phi'(k\wedge l) \\
\end{align*}

Next define 
$$\tilde{X}_{1}(n):= \sum_{\substack{q, p, k, l \in \NN \\ pk + ql = n}} kl^2 \cdot\phi'(k\wedge l)  \hspace{1cm} \text{ and } \hspace{1cm}  \tilde{X}_{2}(n) := \sum_{\substack{p, k, l \in \NN \\ p(k + l) = n}} kl^2 \cdot\phi'(k\wedge l)$$

Then,
$$\tilde{X}_{1}(n) = \sum_{d|n} \sum_{\substack{q, p, k, l \in \NN \\ pk + ql = n\\ k \wedge l = d}} kl^2 \cdot\phi'(k\wedge l)
= \sum_{d|n} \frac{\phi(d)}{d}d^3\sum_{\substack{q, p, k, l \in \NN \\ pk + ql = n/d\\ k \wedge l = 1}} kl^2 
= \sum_{d|n} d^2\phi(d)\sum_{\substack{q, p, k, l \in \NN \\ pk + ql = n/d\\ k \wedge l = 1}} kl^2$$

Now, define $X_{11} (n) = \sum_{\substack{p, q, k, l,  \in \NN \\ pk+ql = n \\ k \wedge l = 1}} kl^2$, then we see that $\tilde{X}_{1} \equiv \Id_2 \cdot \phi * X_{11}$

Now, considering the sum $X_{11}$ without the gcd condition on $k$ and $l$

$$\tilde{X}_{11}(n) =  \sum_{\substack{p, q, k, l \in \NN \\  pk+ql = n }} kl^2 \\
= \sum_{d|n}d^3 \sum_{\substack{p, k, l \in \NN \\ pk+ql = n/d \\ k \wedge l = 1}} kl^2\\
= \sum_{d|n} d^3 X_{11}\left(\frac{n}{d}\right)$$

so that $\tilde{X}_{11} \equiv \Id_3 * X_{11} \implies  X_{11} \equiv \Id_3 \cdot \mu * \tilde{X}_{11}$ by Proposition \ref{prop:mobiuscor}. We also have,  
$$\tilde{X}_{11}(n) = \sum_{\substack{p, q, k, l \in \NN \\  pk+ql = n }} kl^2
= \sum_{\substack{\alpha, \beta \in \NN\\ \alpha + \beta = n}} \sum_{k | \alpha} k \sum_{l | \beta} l^2
= \sum_{\alpha = 1}^{n-1} \sigma_1(\alpha) \sigma_2(n - \alpha)
= (\sigma_1 \Delta \sigma_2)(n)$$

Similarly,
$$\tilde{X}_{2}(n) = \sum_{\substack{p, k, l \in \NN \\ p(k + l) = n}} kl^2 \cdot\phi'(k\wedge l)
= \sum_{d | n} \sum_{\substack{p, k, l \in \NN \\ p (k+l) = n \\ k \wedge l = d}} kl^2 \cdot\phi'(k\wedge l)
= \sum_{d|n} d^2 \phi(d) \sum_{\substack{p, k, l \in \NN \\ p (k+l) = n/d \\ k \wedge l = 1}} kl^2$$

Define $X_{21} (n) = \sum_{\substack{p, k, l \in \NN \\ p (k+l) = n \\ k \wedge l = 1}} kl^2$,  and $ \tilde{X}_{21} =  \sum_{\substack{p, k, l \in \NN \\ p (k+l) = n }} kl^2 $ so that $\tilde{X}_{2} \equiv \Id_2 \cdot \phi * X_{21}$. 

Then, $ \tilde{X}_{21} \equiv \Id_3 * X_{21}  \implies X_{21}\equiv \Id_3 \cdot \mu *\tilde{X}_{21}$ again by Proposition \ref{prop:mobiuscor}. 

Finally, we consider 
$$\tilde{X}_{21}(n) = \sum_{\substack{p, k, l \in \NN \\ p (k+l) = n }} kl^2 
= \sum_{\alpha | n} \sum_{ l=1}^{ \alpha - 1} l^2 (\alpha - l) 
= \sum_{\alpha | n}\left(\frac{\alpha^4}{12} - \frac{\alpha^2}{12}\right)
= \left(\frac{1}{12}\sigma_4 - \frac{1}{12} \sigma_2\right)(n)$$
Hence, 
$$X \equiv \mu * \tilde{X}
\equiv \mu *( \tilde{X}_1 - \tilde{X}_2)
\equiv \mu * \Id_2 \cdot \phi *( X_{11} - X_{21})$$
Replacing $X_{11}$ and $X_{21}$ and using Proposition \ref{prop:arithident} parts \ref{prop:mu}, \ref{prop:id}, \ref{prop:j},
\begin{align*}
X &\equiv \mu * \Id_2 \cdot \phi * ( \Id_3 \cdot \mu *\tilde{X}_{11}- \Id_3 \cdot \mu * \tilde{X}_{21})\\
&\equiv \mu * \Id_2 \cdot \phi * \Id_3 \cdot \mu * (\tilde{X}_{11} -  \tilde{X}_{21})\\
&\equiv \mu * \Id_2\cdot (\phi * \Id_1 \cdot \mu) * (  \sigma_1 \Delta \sigma_2 -  \frac{1}{12}\sigma_4 + \frac{1}{12} \sigma_2)\\
&\equiv \Id_2\cdot \mu *(\mu * (\sigma_1 \Delta \sigma_2 ) - \frac{1}{12}\Id_4 + \frac{1}{12} \Id_2)\\
&\equiv \Id_2 \cdot \mu * \mu * (\sigma_1 \Delta \sigma_2) - \frac{1}{12}\Id_2\cdot (\mu * \Id_2) + \frac{1}{12} \Id_2 \cdot (\mu * 1)\\
&\equiv (\mu \cdot \sigma_2) * (\sigma_1 \Delta \sigma_2) - \frac{1}{12}\Id_2\cdot J_2 
\end{align*}\end{proof}

\begin{lemma}\label{lem:Y}
$$Y(n) := \sum_{\substack{p, q, k, m \in \NN  \\ k > m \\ pk + qm = n \\ p \wedge q = 1}} km\cdot \phi'(k\wedge m) = \frac{5}{24} n J_2(n) + \frac{1}{2} n  J_1(n) -\frac{3}{4}J_2(n)$$
\end{lemma}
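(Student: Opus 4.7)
The strategy mirrors the proof of Lemma \ref{lem:X}. First, I will drop the coprimality condition on $(p,q)$ by defining $\tilde Y(n)$ to be the same sum without the constraint $p \wedge q = 1$; grouping by $d = p \wedge q$ yields $\tilde Y \equiv 1 * Y$, so M\"obius inversion gives $Y \equiv \mu * \tilde Y$. Next, I will exploit the involution $(p,k) \leftrightarrow (q,m)$, which preserves the summand and exchanges the regions $k > m$ and $k < m$, to obtain
$$\tilde Y \equiv \tfrac{1}{2}(\tilde Y_1 - \tilde Y_2),$$
where $\tilde Y_1(n) = \sum_{pk+qm=n} km\,\phi'(k\wedge m)$ is the unrestricted sum and $\tilde Y_2(n) = \sum_{(p+q)k=n} k\phi(k)$ is the diagonal contribution coming from $k = m$ (the factor $k^2\phi'(k)$ simplifying to $k\phi(k)$).

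For $\tilde Y_1$, the double-M\"obius argument of Lemma \ref{lem:X} applies verbatim: peeling off $e = k \wedge m$ gives $\tilde Y_1 \equiv (\Id_1 \cdot \phi) * Y_{11}$, and then dropping the further coprimality $k \wedge m = 1$ in $Y_{11}$ gives $Y_{11} \equiv (\mu \cdot \Id_2) * \tilde Y_{11}$, with
$$\tilde Y_{11}(n) \;=\; \sum_{\alpha + \beta = n} \sigma_1(\alpha)\sigma_1(\beta) \;=\; (\sigma_1 \Delta \sigma_1)(n).$$
Ramanujan's identity (Proposition \ref{prop:sigma1convsigma1}) then rewrites this as $\tfrac{5}{12}\sigma_3 + \tfrac{1}{12}\sigma_1 - \tfrac{1}{2}\Id_1\sigma_1$. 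For $\tilde Y_2$, the inner count of ordered positive pairs $(p,q)$ with $p + q = n/k$ is just $n/k - 1$, so $\tilde Y_2 \equiv \Id_2 - (\Id_1 \cdot \phi) * 1$; the nice cancellation $\mu * \bigl((\Id_1 \cdot \phi) * 1\bigr) \equiv \Id_1 \cdot \phi$ (from $\mu * 1 = \varepsilon$) then collapses $\mu * \tilde Y_2$ to $J_2 - \Id_1 \cdot \phi$.

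Assembling the pieces gives
$$Y \equiv \tfrac{1}{2}\bigl(\mu * (\Id_1 \cdot \phi) * (\mu \cdot \Id_2) * (\sigma_1 \Delta \sigma_1) \;-\; J_2 \;+\; \Id_1 \cdot \phi\bigr),$$
and the remaining task is to show that the iterated convolution on the right simplifies to $\tfrac{5}{12}\Id_1 \cdot J_2 - \tfrac{1}{2}J_2$; since $\Id_1 \cdot \phi = \Id_1 \cdot J_1$, this will reassemble into the claimed formula. I expect this final algebraic simplification to be the main obstacle. Each of the three Ramanujan summands ($\sigma_3$, $\sigma_1$, and $\Id_1\sigma_1$) must be pushed through three M\"obius-type convolutions using complete multiplicativity of $\Id_k$ (Proposition \ref{prop:distributive}) together with the identities $\mu * \Id_k \equiv J_k$ and $\mu * \sigma_k \equiv \Id_k$ from Proposition \ref{prop:arithident}. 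Unlike Lemma \ref{lem:X}, the target formula contains no residual additive-convolution term, so the $\sigma_k$ contributions must cancel algebraically, leaving only polynomial-in-$n$ multiples of $J_1$ and $J_2$; the exact rational coefficients $\tfrac{5}{24}$, $\tfrac{1}{2}$, and $-\tfrac{3}{4}$ in the stated formula emerge from carefully tracking the pieces through this reduction.
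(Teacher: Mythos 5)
Your argument is correct and is in essence the paper's: M\"obius inversion on $p\wedge q$, peeling off $k\wedge m$, dropping the coprimality of $(k,m)$, and Ramanujan's identity for $\sigma_1\Delta\sigma_1$. The only structural difference is the order of symmetrization: the paper keeps $k>m$ through both gcd-peelings and symmetrizes at the very end (its diagonal term is $\sum_{(p+q)k=n}k^2=n\sigma_1(n)-\sigma_2(n)$), while you symmetrize at the start and carry the $\phi'$-weighted diagonal, which you correctly collapse to $\Id_2-(\Id_1\cdot\phi)*1$ via $\sum_{d|n}\phi(d)=n$; note also that your $\tilde Y_1$ is exactly $V(n)$ of Lemma~\ref{lem:V}, so that lemma could simply be quoted. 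The step you defer is not an obstacle: by Proposition~\ref{prop:distributive} and Proposition~\ref{prop:arithident}\ref{prop:mu}, $(\Id_1\cdot\phi)*(\mu\cdot\Id_2)\equiv\Id_1\cdot(\phi*(\Id_1\cdot\mu))\equiv\Id_1\cdot\mu$, and then termwise $\mu*(\Id_1\cdot\mu)*\sigma_3\equiv\Id_1\cdot J_2$, $\mu*(\Id_1\cdot\mu)*(\Id_1\cdot\sigma_1)\equiv J_2$, and $\mu*(\Id_1\cdot\mu)*\sigma_1\equiv\varepsilon$, so your iterated convolution equals $\frac{5}{12}\Id_1\cdot J_2-\frac{1}{2}J_2+\frac{1}{12}\varepsilon$; the $\varepsilon$ term is visible only at $n=1$ (where $Y(1)=0$ while the stated closed formula gives $-1/24$), a discrepancy the paper's own derivation also drops silently and which is irrelevant in the range used ($n\ge 4$). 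Assembling, $Y\equiv\frac{1}{2}\bigl(\frac{5}{12}\Id_1\cdot J_2-\frac{1}{2}J_2-J_2+\Id_1\cdot J_1\bigr)\equiv\frac{5}{24}\Id_1\cdot J_2+\frac{1}{2}\Id_1\cdot J_1-\frac{3}{4}J_2$, exactly as claimed.
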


\begin{proof}
First define, dropping the gcd condition on $p$ and $q$, 
$$\tilde{Y}(n) := \sum_{\substack{p, q, k, m \in \NN  \\ k > m \\ pk + qm = n }} km\cdot \phi'(k\wedge m) 
= \sum_{d|n} \sum_{\substack{p, q, k, m \in \NN  \\ k > m \\ pk + qm = n\\ p \wedge q = d }} km\cdot \phi'(k\wedge m)
=  \sum_{d|n}\sum_{\substack{p, q, k, m \in \NN  \\ k > m \\ pk + qm = \frac{n}{d}\\ p \wedge q = 1}} km\cdot \phi'(k\wedge m)
=\sum_{d|n}Y\left(\frac{n}{d}\right)$$

Hence, by the Mobius inversion formula, $Y \equiv \mu * \tilde{Y}$.

Then,
$$\tilde{Y}(n)= \sum_{\substack{p, q, k, m \in \NN  \\ k > m \\ pk + qm = n }} km\cdot \phi'(k\wedge m)= \sum_{d|n} \sum_{\substack{p, q, k, m \in \NN  \\ k > m \\ pk + qm = n \\ k \wedge m = d }} km \frac{\phi(d)}{d}
= \sum_{d|n} d\phi(d)\sum_{\substack{p, q, k, m \in \NN  \\ k > m \\ pk + qm = n/d \\ k \wedge m = 1 }} km$$

So, $\tilde{Y} \equiv \Id_1 \cdot \phi * Y_1$
where $Y_1(n) = \sum_{\substack{p, q, k, m \in \NN  \\ k > m \\ pk + qm = n \\ k \wedge m = 1 }} km $.
Next define
$$\tilde{Y}_1(n) := \sum_{\substack{p, q, k, m \in \NN  \\ k > m \\ pk + qm = n }} km = \sum_{d|n} d^2\sum_{\substack{p, q, k, m \in \NN  \\ k > m \\ pk + qm = n/d \\ k \wedge m = 1}} km $$
so that $\tilde{Y}_1 \equiv \Id_2 * Y_1 \implies Y_1 \equiv \Id_2 \cdot \mu * \tilde{Y}_1$ by Proposition \ref{prop:mobiuscor}.
Then,
\begin{align*}
\tilde{Y}_1(n) &= \sum_{\substack{p, q, k, m \in \NN  \\ k > m \\ pk + qm = n }} km = \frac{1}{2}\sum_{\substack{p, q, k, m \in \NN  \\ pk + qm = n }} km - \frac{1}{2}\sum_{\substack{p, q, k \in \NN \\ (p+q)k = n}}k^2\\
&= \frac{1}{2} \sum_{\substack{\alpha, \beta \in \NN\\ \alpha + \beta = n}}\sum_{k | \alpha} k \sum_{m | \beta} m - \frac{1}{2} \sum_{k|n}\left(\frac{n}{k} -1\right)k^2   \\
&= \frac{1}{2} \left(\frac{5}{12} \sigma_3(n) + \frac{1}{12}\sigma_1(n) - \frac{1}{2}n \sigma_1(n) \right)-\frac{1}{2}\left(n \sigma_1(n) - \sigma_2(n)\right) \hspace{1cm}\text{ using Proposition \ref{prop:sigma1convsigma1}}.\\\\
&=\frac{5}{24} \sigma_3(n) + \frac{1}{2}\sigma_2(n) + \frac{1}{24}\sigma_1(n) - \frac{3}{4}n \sigma_1(n)
\end{align*}

Hence, 
$$Y \equiv \mu * \tilde{Y}
\equiv \mu * \Id_1\cdot \phi * Y_1
\equiv \mu * \Id_1 \cdot \phi * \Id_2 \cdot \mu * \tilde{Y}_1
\equiv \mu * \Id_1 \cdot \mu * \left(\frac{5}{12} \sigma_3 + \sigma_2+ \frac{1}{24}\sigma_1 - \frac{3}{4}\Id_1 \cdot \sigma_1\right) $$
using Proposition \ref{prop:arithident} \ref{prop:mu}. Then using \ref{prop:id} and \ref{prop:j} of the same proposition, we get,
$$Y\equiv \mu * \Id_1 \cdot \mu * \left(\frac{5}{12} \sigma_3 + \sigma_2+ \frac{1}{24}\sigma_1 - \frac{3}{4}\Id_1 \cdot \sigma_1\right)
\equiv\frac{5}{24} \Id_1\cdot J_2 + \frac{1}{2} \Id_1 \cdot J_1 -\frac{3}{4}J_2 $$\end{proof}

The following two lemmas are intermediate sums needed while enumerating cylinder diagram C surfaces.

\begin{lemma}\label{lem:U}
 $$U(n):= \sum_{\substack{p, q, k, l \in \NN  \\ k > l\\ pk + ql = n }} kl^2\cdot\phi'(k\wedge l) = \left((\Id_2 \cdot \mu) *  \left(\frac{1}{24}\sigma_4 + \frac{1}{2} \sigma_3 - \frac{1}{24}(12\Id_1+1)\sigma_2\right)\right)(n)  $$
\end{lemma}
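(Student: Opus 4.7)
The proof parallels the structure of Lemmas \ref{lem:X} and \ref{lem:Y}: first, strip the arithmetic factor $\phi'(k\wedge l)$ by M\"obius inversion to reduce to an unrestricted sum, and then evaluate that sum via divisor-function identities. Writing $\phi'(d)=\sum_{e\mid d}\mu(e)/e$ and substituting $k=ek'$, $l=el'$ (so $kl^2 = e^3 k'(l')^2$ and $pk+ql=n$ becomes $pk'+ql'=n/e$) shows $U\equiv (\Id_2\cdot\phi)*U_1$, where $U_1$ adds the condition $k\wedge l=1$. A second M\"obius inversion, grouping $\tilde U_1(n):=\sum_{pk+ql=n,\,k>l}kl^2$ by $d=k\wedge l$, gives $\tilde U_1\equiv \Id_3*U_1$, hence $U_1\equiv (\Id_3\cdot\mu)*\tilde U_1$ by Proposition \ref{prop:mobiuscor}. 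Composing and applying distributivity of $\Id_2$ over Dirichlet convolution (Proposition \ref{prop:distributive}) together with $\phi*\Id_1\cdot\mu\equiv\mu$ (Proposition \ref{prop:arithident}\ref{prop:mu}) collapses the two outer factors:
\[(\Id_2\cdot\phi)*(\Id_3\cdot\mu)\equiv \Id_2\cdot(\phi*\Id_1\cdot\mu)\equiv \Id_2\cdot\mu,\]
so $U\equiv (\Id_2\cdot\mu)*\tilde U_1$ and it suffices to show
\[\tilde U_1(n)=\frac{1}{24}\sigma_4(n)+\frac{1}{2}\sigma_3(n)-\frac{1}{24}(12n+1)\sigma_2(n).\]

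Evaluating $\tilde U_1$ is the principal obstacle. Unlike the symmetric summands in Lemmas \ref{lem:X} and \ref{lem:Y}, the factor $kl^2$ is not symmetric in $(k,l)$; halving $\sum_{pk+ql=n}kl^2=(\sigma_1\Delta\sigma_2)(n)$ and subtracting the diagonal $k=l$ only determines $\tilde U_1(n)+\tilde U_1'(n)$, where $\tilde U_1'(n):=\sum_{pk+ql=n,\,k>l}k^2 l$. To break this degeneracy, I would reparametrize by $j:=k-l\geq 1$ and $r:=p+q\geq 2$: the inequality $k>l$ is absorbed into $j\in\NN$, and $pk+ql=n$ transforms into $pj+rl=n$ with $1\leq p<r$. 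This yields
\[\tilde U_1(n)=\sum_{r\geq 2}\sum_{p=1}^{r-1}\sum_{\substack{j,l\in\NN\\pj+rl=n}}\bigl(l^3+jl^2\bigr),\]
in which each inner sum is a divisor-weighted count of representations $a+b=n$ with prescribed divisibilities $p\mid a$, $r\mid b$.

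To finish, I would rearrange the double outer sum over $r$ and $p<r$ and apply Ramanujan's identity for $\sigma_1\Delta\sigma_1$ (Proposition \ref{prop:sigma1convsigma1}) together with the standard identities of Proposition \ref{prop:arithident}. The additive convolutions that appear in the two inner pieces cancel against each other and against the diagonal contributions coming from $(p+q)\mid n$, leaving a linear combination of $\sigma_4(n)$, $\sigma_3(n)$, $n\sigma_2(n)$, and $\sigma_2(n)$ with the claimed coefficients. Substituting the resulting closed form for $\tilde U_1$ back into $U\equiv (\Id_2\cdot\mu)*\tilde U_1$ yields the stated formula. The main technical challenge is tracking the bookkeeping of the asymmetric sums carefully enough that all the $\sigma_i\Delta\sigma_j$ contributions actually cancel; once the reparametrization above is in place the remaining steps are routine divisor-function manipulations.
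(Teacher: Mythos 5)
Your reduction to the gcd-free sum is fine and matches the paper: grouping by $d=k\wedge l$ (or, as you describe, expanding $\phi'(k\wedge l)=\sum_{e\mid k\wedge l}\mu(e)/e$ --- note that this expansion actually yields $U\equiv(\Id_2\cdot\mu)*\tilde U_1$ in a single step rather than $(\Id_2\cdot\phi)*U_1$, but either bookkeeping collapses via $\phi*\Id_1\cdot\mu\equiv\mu$ to $U\equiv(\Id_2\cdot\mu)*\tilde U_1$). So, exactly as in the paper, everything hinges on proving the closed form $\tilde U_1(n)=\frac{1}{24}\sigma_4(n)+\frac{1}{2}\sigma_3(n)-\frac{1}{24}(12n+1)\sigma_2(n)$, and this is where your proposal has a genuine gap.

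The reparametrization $j=k-l$, $r=p+q$ does not remove the obstruction you yourself identified; it only moves the inequality from the weight variables to the coefficient variables. Indeed $\sum_{k>l,\,pk+ql=n}kl^2=\sum_{p<r,\,pj+rl=n}(l^3+jl^2)$ is the same identity the paper records as $\tilde U_1(n)=\sum_{p(k+l)+ql=n}(k+l)l^2$, and the inner sums you are left with, e.g. $\sum_{a+b=n}\sum_{p\mid a,\,r\mid b,\,p<r}(b/r)^3$, are \emph{constrained} additive convolutions, not values of $\sigma_i\Delta\sigma_j$. Moreover, the unconstrained convolutions entering at this cubic weight are $\sigma_1\Delta\sigma_2$ and $\sigma_0\Delta\sigma_3$, neither of which has a closed form in divisor functions --- $\sigma_1\Delta\sigma_2$ is precisely the term that survives irreducibly in $B(n)$ and $D(n)$ --- so Ramanujan's identity for $\sigma_1\Delta\sigma_1$ (Proposition \ref{prop:sigma1convsigma1}), which handles the quadratic weights in Lemmas \ref{lem:Y}, \ref{lem:V} and \ref{lem:W}, is not the relevant tool, and your asserted ``cancellation of the additive convolutions'' is exactly the nontrivial claim that needs a mechanism. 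The paper supplies one: decompose the full sum $\sum_{pk+ql=n}kl^2$ (packaged as the Dirichlet series $\sum kl^2/(pk+ql)^s$) in two different ways, once by the trichotomy $k<l$, $k=l$, $k>l$ and once by $p<q$, $p=q$, $p>q$; after matching terms by the symmetry $(k,l,p,q)\mapsto(l,k,q,p)$, the intractable constrained pieces combine into $2\tilde U_1(n)$, and only the diagonal sums $\sum_{p(k+l)=n}kl^2=\frac{1}{12}\bigl(\sigma_4(n)-\sigma_2(n)\bigr)$ and $\sum_{(p+q)k=n}k^3=n\sigma_2(n)-\sigma_3(n)$ remain, giving $2\tilde U_1(n)=\frac{1}{12}\bigl(\sigma_4(n)-\sigma_2(n)\bigr)-n\sigma_2(n)+\sigma_3(n)$ and hence the stated formula. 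Without this double-decomposition identity (or an equivalent device forcing the $\sigma_1\Delta\sigma_2$-type terms to cancel), your outline does not go through.
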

\begin{proof}

We rewrite the sum over the divisors of $n$,
$$U(n)=\sum_{\substack{p, q, k, l \in \NN  \\ k > l\\ pk + ql = n }} kl^2\cdot\phi'(k\wedge l) = \sum_{d |n} \sum_{\substack{p, q, k, l \in \NN  \\ k > l\\ pk + ql = n \\ k \wedge l = d}} kl^2\cdot\phi'(k\wedge l) 
= \sum_{d|n}d^2\phi(d)\sum_{\substack{p, q, k, l \in \NN  \\ k > l\\ pk + ql = n/d \\ k \wedge l = 1 }} kl^2$$

Let $$U_1(n) = \sum_{\substack{p, q, k, l \in \NN  \\ k > l\\ pk + ql = n \\ k \wedge l = 1 }} kl^2 \hspace{1cm}\text{ and }\hspace{1cm} \tilde{U}_1(n) =  \sum_{\substack{p, q, k, l \in \NN  \\ k > l\\ pk + ql = n }} kl^2$$ 
Note that $\tilde{U}_1 = \Id_3 * U_1 \implies  U_1 = \Id_3 \cdot \mu * \tilde{U}_1 $ by Proposition \ref{prop:mobiuscor}.

To find $\tilde{U}_1$, consider first the series,
\begin{gather} \label{series2}
\sum_{p, q, k, l \in \NN} \frac{kl^2}{(pk+ql)^s} \end{gather}
for $s$ large enough that the series converges. 
We rewrite the series by breaking it into parts where $k =l >0$, $k> l$ and $l > k$ to get, 
\begin{gather}\label{diag1}
 \sum_{p, q, k, l \in \NN} \frac{kl^2}{(pk+ql)^s} = \sum_{k, p, q \in \NN} \frac{k^3}{((p+q)k)^s} + \sum_{p, q, k, l \in \NN} \frac{(k+l)l^2}{(p(k+l)+ql)^s} + \sum_{k,l,  p, q \in \NN} \frac{k(k+l)^2}{(pk+q(k+l))^s}
\end{gather}
We also break up (\ref{series2}) in another way by when $p=q$, $p>q$ and $p < q$ to get, 
\begin{gather}\label{diag2}
\sum_{p, q, k, l \in \NN} \frac{kl^2}{(pk+ql)^s} = \sum_{k, l, p \in \NN} \frac{kl^2}{(p(k+l))^s} + \sum_{k, l, p, q \in \NN} \frac{kl^2}{((p+q)k+ql)^s} + \sum_{p, q, k, l \in \NN}\frac{kl^s}{(pk+(p+q)l)^s}
\end{gather}
Expanding,
$$  \sum_{k,l,  p, q \in \NN} \frac{k(k+l)^2}{(pk+q(k+l)^s} =  \sum_{k,l,  p, q \in \NN} \frac{k^3}{(pk+q(k+l))^s}+ 2  \sum_{k,l,  p, q \in \NN} \frac{k^2l}{(pk+q(k+l))^s} +  \sum_{k,l,  p, q \in \NN} \frac{kl^2}{(pk+q(k+l))^s}$$
and note that
$$ \sum_{k,l,  p, q \in \NN} \frac{k^2l}{(pk+q(k+l))^s} = \sum_{p, q, k, l \in \NN}\frac{kl^2}{(pk+(p+q)l)^s}$$
Hence, equating $(\ref{diag1})$ and $(\ref{diag2})$ and simplifying, we obtain,
\begin{multline}\label{isol}
\sum_{p, q, k, l \in \NN} \frac{(k+l)l^2}{(p(k+l)+ql)^s}  =  \sum_{k, l, p \in \NN} \frac{kl^2}{(p(k+l))^s} - \sum_{k, p, q \in \NN} \frac{k^3}{((p+q)k)^s} - \sum_{k,l,  p, q \in \NN} \frac{k^3}{(pk+q(k+l))^s}\\ - \sum_{k,l,  p, q \in \NN} \frac{k^2l}{(pk+q(k+l))^s}
\end{multline}
Again,
$$  \sum_{k,l,  p, q \in \NN} \frac{k^3}{(pk+q(k+l))^s} + \sum_{k,l,  p, q \in \NN} \frac{k^2l}{(pk+q(k+l))^s} = \sum_{p, q, k, l \in \NN} \frac{(k+l)l^2}{(p(k+l)+ql)^s}  $$
so that (\ref{isol}) becomes,
\begin{gather}\label{final}
 2 \sum_{p, q, k, l \in \NN} \frac{(k+l)l^2}{(p(k+l)+ql)^s}  =  \sum_{k, l, p \in \NN} \frac{kl^2}{(p(k+l))^s} - \sum_{k, p, q \in \NN} \frac{k^3}{((p+q)k)^s}
\end{gather}
We simplify the first term on the right hand side as,
 $$\sum_{k, l, p \in \NN} \frac{kl^2}{(p(k+l))^s} = \sum_{n >0} \frac{1}{n^s}\sum_{d|n} \sum_{k=1}^d k^2(d-k) = \sum_{n>0}\frac{1}{n^s} \sum_{d|n} \left(\frac{d^4}{12} - \frac{d^2}{12}\right) = \sum_{n}\frac{\sigma_4(n) - \sigma_2(n)}{12n^s}$$
 Similarly,
 $$\sum_{k, p, q \in \NN} \frac{k^3}{((p+q)k)^s} = \sum_{n >0} \frac{1}{n^s}\sum_{d|n} d^3 \left(\frac{n}{d} -1\right) = \sum_{n>0} \frac{n\sigma_2(n) - \sigma_3(n)}{n^s}$$
 So, (\ref{final}) then becomes, 
 $$\sum_{p, q, k, l \in \NN} \frac{(k+l)l^2}{(p(k+l)+ql)^s} = \sum_{n>0}\frac{1}{n^s}\sum_{\substack{p,q,k,l \in \NN\\ p(k+l) +ql = n}}(k+l)l^2  = \frac{1}{2}\sum_{n}\frac{\sigma_4(n) - \sigma_2(n)}{12n^s} - \frac{1}{2}\sum_{n>0} \frac{n\sigma_2(n) - \sigma_3(n)}{n^s} $$
 By uniqueness of Dirichlet series (Proposition \ref{prop:dirichletuniq}), we see that 
 $$\tilde{U}_1(n) = \sum_{\substack{p, q, k, l \in \NN  \\ k > l\\ pk + ql = n }} kl^2 = \sum_{\substack{p,q,k,l \in \NN\\ p(k+l) +ql = n}}(k+l)l^2 = \frac{1}{24}\sigma_4(n) + \frac{1}{2} \sigma_3(n) - \frac{1}{24}(12n+1)\sigma_2(n)$$

Hence, using Proposition \ref{prop:arithident}\ref{prop:mu},
\begin{align*}
U &\equiv \Id_2 \cdot \phi * U_1 \equiv  (\Id_2 \cdot \phi) * ( \Id_3 \cdot \mu) * \left(\frac{1}{24}\sigma_4 + \frac{1}{2} \sigma_3 - \frac{1}{24}(12\Id_1+1)\sigma_2\right)\\
&\equiv \Id_2 (\phi * \Id_1 \cdot \mu) * \left(\frac{1}{24}\sigma_4 + \frac{1}{2} \sigma_3 - \frac{1}{24}(12\Id_1+1)\sigma_2\right)\\
& \equiv (\Id_2 \cdot \mu) *  \left(\frac{1}{24}\sigma_4 + \frac{1}{2} \sigma_3 - \frac{1}{24}(12\Id_1+1)\sigma_2\right) 
\end{align*}\end{proof}

\begin{lemma}\label{lem:V}
$$ V(n):=\sum_{\substack{p, q, k, l \in \NN  \\ pk + ql = n }} kl\cdot\phi'(k\wedge l)  = \left((\Id_1 \cdot \mu) * \left(\frac{5}{12} \sigma_3 + \frac{1}{12}\sigma_1 - \frac{1}{2}\Id_1 \sigma_1\right)\right)(n)$$
\end{lemma}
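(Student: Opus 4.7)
The plan is to follow the same general template used in Lemma \ref{lem:X} and Lemma \ref{lem:U}: strip the gcd condition on $k$ and $l$ using $\phi'(k\wedge l) = \phi(d)/d$ on the stratum $k\wedge l=d$, reduce to a shape that can be evaluated using an additive convolution, and then collapse the resulting Dirichlet convolutions using Proposition \ref{prop:arithident}.

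First, I would group terms by $d := k \wedge l$ and write $k = dk'$, $l = dl'$ with $k' \wedge l' = 1$. This gives
\begin{equation*}
V(n) = \sum_{d \mid n} d\,\phi(d)\!\!\sum_{\substack{p,q,k',l' \in \NN \\ pk' + ql' = n/d \\ k' \wedge l' = 1}} k'l',
\end{equation*}
so that $V \equiv (\Id_1 \cdot \phi) * V_1$, where $V_1(n) := \sum_{pk+ql=n,\ k\wedge l=1} kl$. Next, dropping the coprimality condition gives $\tilde V_1(n) := \sum_{pk+ql=n} kl = \sum_{d\mid n} d^2 V_1(n/d)$, i.e.\ $\tilde V_1 \equiv \Id_2 * V_1$, which by Proposition \ref{prop:mobiuscor} inverts to $V_1 \equiv (\Id_2 \cdot \mu) * \tilde V_1$.

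The key evaluation is then that $\tilde V_1$ is precisely an additive convolution of $\sigma_1$ with itself:
\begin{equation*}
\tilde V_1(n) = \sum_{\alpha + \beta = n}\Bigl(\sum_{k\mid \alpha} k\Bigr)\Bigl(\sum_{l\mid \beta} l\Bigr) = (\sigma_1 \Delta \sigma_1)(n) = \tfrac{5}{12}\sigma_3(n) + \tfrac{1}{12}\sigma_1(n) - \tfrac{1}{2} n\, \sigma_1(n),
\end{equation*}
where the closed form is Ramanujan's identity (Proposition \ref{prop:sigma1convsigma1}). This step is what makes this lemma strictly easier than Lemma \ref{lem:U}: the sum $\tilde V_1$ already lands in $(\sigma_1 \Delta \sigma_1)$, so no Dirichlet series juggling of the sort used to compute $\tilde U_1$ is needed.

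Finally, I would combine everything using Proposition \ref{prop:arithident}\ref{prop:mu}, which states $\phi * \Id_1 \cdot \mu \equiv \mu$. Since $\Id_1$ is completely multiplicative, Proposition \ref{prop:distributive} lets us pull it outside:
\begin{equation*}
V \equiv (\Id_1 \cdot \phi) * (\Id_2 \cdot \mu) * \tilde V_1 \equiv \Id_1 \cdot (\phi * \Id_1 \cdot \mu) * \tilde V_1 \equiv (\Id_1 \cdot \mu) * \tilde V_1,
\end{equation*}
which plugs in the closed form of $\tilde V_1$ to give the stated identity. The only place where care is needed is tracking whether $\Id_1$ can be pulled through the convolution correctly; this is exactly the same maneuver used at the end of the proof of Lemma \ref{lem:U}, so the computation should go through cleanly.
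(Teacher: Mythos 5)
Your proposal is correct and follows essentially the same route as the paper: stratify by $d = k\wedge l$ to get $V \equiv (\Id_1\cdot\phi)*V_1$, invert $\tilde V_1 \equiv \Id_2 * V_1$ via Proposition \ref{prop:mobiuscor}, identify $\tilde V_1$ with $\sigma_1\Delta\sigma_1$ and apply Ramanujan's identity (Proposition \ref{prop:sigma1convsigma1}), and collapse $(\Id_1\cdot\phi)*(\Id_2\cdot\mu) \equiv \Id_1\cdot(\phi*\Id_1\cdot\mu) \equiv \Id_1\cdot\mu$ using complete multiplicativity and Proposition \ref{prop:arithident}. The only difference is cosmetic: you make the final step $\phi*\Id_1\cdot\mu\equiv\mu$ explicit, which the paper leaves implicit.
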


\begin{proof}
$$\sum_{\substack{p, q, k, l \in \NN  \\ pk + ql = n }} kl\cdot\phi'(k\wedge l)  = \sum_{d |n} \sum_{\substack{p, q, k, l \in \NN  \\ pk + ql = n \\ k \wedge l = d}} kl\cdot\phi'(k\wedge l)  
 = \sum_{d |n} d\phi(d) \sum_{\substack{p, q, k, l \in \NN  \\ pk + ql = n/d \\ k \wedge l = 1}} kl$$
 Let 
 $$ V_1(n) =  \sum_{\substack{p, q, k, l \in \NN  \\ pk + ql = n \\ k \wedge l = 1}} kl \text{   and   }\tilde{V}_1(n) = \sum_{\substack{p, q, k, l \in \NN  \\ pk + ql = n }} kl$$
 So, $V \equiv (\Id_1 \cdot \phi) * V_1$. Also, as in Proposition \ref{lem:U}, we have $ V_1 \equiv \Id_2 \cdot \mu * \tilde{V}_1$. 
 By Proposition \ref{prop:sigma1convsigma1} we know $\tilde{V}_1(n) =\frac{5}{12} \sigma_3(n) + \frac{1}{12}\sigma_1(n) - \frac{1}{2}n \sigma_1(n)$ so that, using Proposition \ref{prop:mobiuscor}\ref{prop:mu},
\begin{align*}
V &\equiv  \Id_1 \cdot \phi * V_1 \equiv (\Id_1 \cdot \phi )*(\Id_2 \cdot \mu) * \left(\frac{5}{12} \sigma_3 + \frac{1}{12}\sigma_1 - \frac{1}{2}\Id_1\cdot \sigma_1\right)\\&\equiv \Id_1 (\phi * \Id_1 \cdot \mu) * \left(\frac{5}{12} \sigma_3 + \frac{1}{12}\sigma_1 - \frac{1}{2}\Id_1 \cdot \sigma_1\right)
 \end{align*} \end{proof}

The following Lemma evaluates an intermediate sum that appears in our enumeration of cylinder diagram D surfaces:
\begin{lemma}\label{lem:W}
$$W(n) := \sum_{\substack{q, p, k, l \in \NN \\ p> q\\ pk + ql = n \\ q \wedge p = 1}} (k+l)kl \cdot\phi'(k\wedge l) q  = \left(\frac{1}{12}n^2 - \frac{1}{6}n\right)J_2(n)$$
\end{lemma}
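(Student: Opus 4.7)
Plan: The proof will follow the three-stage Möbius-inversion strategy of Lemma~\ref{lem:X}, modified to accommodate the asymmetric weight $q$ in the summand.

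First I would drop the coprimality $p\wedge q=1$. Setting
\[
\tilde W(n):=\sum_{p>q,\ pk+ql=n}(k+l)kl\,\phi'(k\wedge l)\,q,
\]
the substitution $p=dp',\ q=dq'$ with $p'\wedge q'=1$ pulls a factor $d$ out of $q=dq'$, giving $\tilde W(n)=\sum_{d\mid n}d\cdot W(n/d)$, i.e.\ $\tilde W\equiv \Id_1*W$. (Contrast Lemma~\ref{lem:X}, where the analogous relation is $\tilde X\equiv 1*X$; the extra $\Id_1$ is the only footprint left by the weight $q$ at this stage.) Proposition~\ref{prop:mobiuscor} then gives $W\equiv(\Id_1\cdot\mu)*\tilde W$. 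A second pass, writing $k=dk',\ l=dl'$ with $k'\wedge l'=1$, peels off the implicit gcd on $(k,l)$: $\tilde W\equiv(\Id_2\cdot\phi)*W_1$ where $W_1$ is the analogous sum under $k\wedge l=1$; similarly $W_1\equiv(\Id_3\cdot\mu)*\tilde W_1$, where
\[
\tilde W_1(n):=\sum_{p>q,\ pk+ql=n}(k+l)kl\,q
\]
is the core quantity to evaluate.

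The main obstacle is that the weight $q$ is not invariant under the involution $(p,k)\leftrightarrow(q,l)$, so the restriction $p>q$ cannot be removed by simply doubling the sum as it was in Lemma~\ref{lem:X}. I would resolve this by pairing $\tilde W_1$ with the companion sum $\tilde V_1(n):=\sum_{p>q,\ pk+ql=n}(k+l)kl\,p$. The involution identifies $\tilde V_1$ with $\sum_{p<q}(k+l)kl\,q$, so the \emph{symmetric} combination
\[
\tilde W_1+\tilde V_1 \;=\; \tfrac{1}{2}\sum_{pk+ql=n}(k+l)kl(p+q)\;-\;\sum_{p(k+l)=n}(k+l)kl\,p
\]
is swap-invariant and evaluates via standard divisor-sum manipulations (as in the endgame of Lemma~\ref{lem:U}), reducing through Proposition~\ref{prop:sigma1convsigma1} to additive convolutions of $\sigma_1\Delta\sigma_2$ type. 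The \emph{antisymmetric} difference $\tilde V_1-\tilde W_1=\sum_{p>q}(k+l)kl(p-q)$ I would compute via the change of variables $(r,s)=(p-q,\,q)$, which recasts the constraint as $rk+s(k+l)=n$; setting $m:=k+l$ and summing over $1\le k<m$ yields an additive convolution of the form $\sum_{rk+sm=n}mk(m-k)\,r$, reducible to products of $\sigma_1,\sigma_2,\tau$.

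Solving the two relations for $\tilde W_1$ and chasing back through $W\equiv(\Id_1\cdot\mu)*(\Id_2\cdot\phi)*(\Id_3\cdot\mu)*\tilde W_1$, the identity $\phi*\Id_1\cdot\mu\equiv\mu$ from Proposition~\ref{prop:arithident} collapses the middle Dirichlet factors, and $\mu*\Id_k\equiv J_k$ produces the $J_2$ at the end. The hardest step is the cancellation: the $\sigma_1\Delta\sigma_2$ contributions arising from the symmetric and antisymmetric halves must vanish against each other exactly, since the target $\tfrac{1}{12}n(n-2)J_2(n)$ contains no additive-convolution component at all—this exact cancellation is both the delicate technical point and a strong internal consistency check. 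Once it is verified, what remains simplifies to $\tfrac{1}{12}\Id_2\cdot J_2-\tfrac{1}{6}\Id_1\cdot J_2=(\tfrac{1}{12}n^2-\tfrac{1}{6}n)J_2(n)$, the claimed formula.
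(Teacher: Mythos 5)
Your Möbius-inversion scaffolding coincides with the paper's and is correct: dropping $p\wedge q=1$ does give $\tilde W\equiv \Id_1*W$ (the weight $q$ contributing the extra $\Id_1$), the two further peelings $\tilde W\equiv(\Id_2\cdot\phi)*W_1$ and $W_1\equiv(\Id_3\cdot\mu)*\tilde W_1$ are fine, and so is the final collapse via $\phi*\Id_1\cdot\mu\equiv\mu$ and $\mu*\Id_k\equiv J_k$. The gap is in the core evaluation of $\tilde W_1$. Pairing with $\tilde V_1$ and splitting into symmetric and antisymmetric halves is by itself vacuous, since $(\tilde W_1+\tilde V_1)-(\tilde V_1-\tilde W_1)=2\tilde W_1$ identically; you gain something only if each half is computed independently, and neither of your proposed computations goes through. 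For the symmetric half, setting $a=pk$, $b=ql$ gives
$$\tfrac12\sum_{pk+ql=n}(k+l)kl(p+q)\;=\;\sum_{a+b=n}a\,\sigma_1(a)\,\sigma_1(b)\;+\;\sum_{a+b=n}\sigma_2(a)\,b\,\sigma_0(b),$$
and while the first term equals $\tfrac n2(\sigma_1\Delta\sigma_1)(n)$ and is closed by Proposition \ref{prop:sigma1convsigma1}, the second is $\sigma_2\Delta(\Id_1\cdot\sigma_0)$, which is not of $\sigma_1\Delta\sigma_2$ type, is covered by no identity in the paper, and has no elementary closed form. For the antisymmetric half, your substitution gives $\sum_{rk+sm=n,\,k<m}mk(m-k)r=\sum_{a+b=n}a\sum_{k\mid a,\ m\mid b,\ k<m}m(m-k)$; the inequality $k<m$ couples a divisor of $a$ with a divisor of $b$, so the inner sum does not factor into $\sigma_1,\sigma_2,\sigma_0$, and antisymmetry of $mk(m-k)$ does not let you drop the restriction by halving. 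Indeed your antisymmetric sum is the same kind of object as $\tilde W_1$ itself, which in these variables reads $\sum_{pk+qm=n,\,k<m}mk(m-k)q$ (weight on the other coefficient), so the decomposition is circular. The asserted ``exact cancellation'' of the non-closed terms is therefore not a consistency check you can verify at the end; it is equivalent to the evaluation you still owe.

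The missing ingredient, which the paper supplies, is an independent linear relation for $\tilde W_1$: one considers the series $\sum_{p,q,k,l}lk(l-k)q\,(pk+ql)^{-s}$ and splits it in two different ways, once according to the comparison of $k$ and $l$ and once according to the comparison of $p$ and $q$; equating the two splittings (and using uniqueness of Dirichlet series) yields $\tilde W_1(n)=2\sum_{p(k+l)+ql=n}kl^2q$, and substituting $ql=n-p(k+l)$ on the right gives the self-referential relation that closes to $\tilde W_1(n)=\tfrac23\,n\sum_{p(k+l)+ql=n}kl$. This last sum is handled by symmetry and Ramanujan's identity (Proposition \ref{prop:sigma1convsigma1}), which is why no additive-convolution term survives in $W$. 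Without such a double-counting relation (or some other genuinely new input giving a second equation for $\tilde W_1$ or $\tilde V_1$), your plan cannot be completed as described.
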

\begin{proof}
Using Proposition \ref{prop:mobiuscor} and a similar technique implemented in Lemma \ref{lem:X}, we get $W \equiv \Id_1 \cdot \mu * \tilde{W}$ where
$$\tilde{W}(n) = \sum_{\substack{q, p, k, l \in \NN \\ p> q\\ pk + ql = n}} (k+l)kl \cdot\phi'(k\wedge l) q $$

Modifying the sum to incorporate the $p > q$ in a different way,
$$\tilde{W}(n) =\sum_{\substack{q, p, k, l \in \NN \\ (p+q)k + ql = n}} (k+l)kl \cdot\phi'(k\wedge l) q =   \sum_{\substack{q, p, k, l \in \NN \\ pk + q(k+l) = n}} (k+l)kl \cdot\phi'(k\wedge l) q $$
Again, using Proposition \ref{prop:mobiuscor} and technique implemented in Lemma \ref{lem:X}, we can get $\tilde{W} \equiv \Id_2 \cdot \phi * W_1$ where,
$$W_1(n) = \sum_{\substack{q, p, k, l \in \NN \\ pk + q(k+l) = n/d\\ k \wedge l = 1}} (k+l)kl q$$

Once more applying similar manipulation as above and using Proposition \ref{prop:mobiuscor}, we can get, $W_1 \equiv \Id_3 \cdot \mu *\tilde{W}_1$ where 
$$\tilde{W}_1(n) = \sum_{\substack{q, p, k, l \in \NN \\ pk + q(k+l) = n}} (k+l)kl q$$
Next we will calculate $\tilde{W}_1$. 
First consider the series,
\begin{gather}\label{series}
\sum_{\substack{k >0, l>0\\ q>0, p>0}} \frac{lk(l-k)q}{(pk+ql)^s}
\end{gather} for $s$ large enough so that the series converges. We can first break the sum into three parts according to when $k = l$, $k > l$ and $l > k$ as follows:
$$\sum_{\substack{k >0, l>0\\ q>0, p>0}} \frac{lk(l-k)q}{(pk+ql)^s} = \sum_{\substack{k >0\\ q>0, p>0}} \frac{k^2q \cdot 0}{(pk+qk)^s} - \sum_{\substack{k >0, l>0\\ q>0, p>0}} \frac{l(k+l)kq}{(p(k+l)+ql)^s} + \sum_{\substack{k >0, l>0\\ q>0, p>0}} \frac{(l+k)klq}{(pk+q(k+l))^s}$$
The first term is 0 and the third term can be written as,
$$ \sum_n \sum_{\substack{k >0, l>0\\ q>0, p>0\\ pk + q(k+l) = n}} \frac{(l+k)klq}{(pk+q(k+l))^s} = \sum_n \sum_{\substack{k >0, l>0\\ q>0, p>0\\ pk + q(k+l) = n}} \frac{(l+k)klq}{n^s} =  \sum_n\frac{\tilde{W}_1(n)}{n^s}$$ 

Another way we can split the sum in (\ref{series}) is to break it up intro three parts according to when $p = q$, $p > q$ and $p < q$ to obtain:
$$\sum_{\substack{k >0, l>0\\ q>0, p>0}} \frac{lk(l-k)q}{(pk+ql)^s} = \sum_{\substack{k >0, l>0\\p>0}} \frac{lk(l-k)p}{(p(k+l))^s} + \sum_{\substack{k >0, l>0\\ q>0, p>0}} \frac{lk(l-k)q}{((pk+q(k+l))^s} + \sum_{\substack{k >0, l>0\\ q>0, p>0}} \frac{lk(l-k)(p+q)}{(p(k+l)+ql)^s}$$

Note that the first term here is 0 since,
$$ \sum_{\substack{k >0, l>0\\p>0}} \frac{lk(l-k)p}{(p(k+l))^s} = \sum_{\substack{k >0, l>0\\p>0}} \frac{l^2kp}{(p(k+l))^s} - \sum_{\substack{k >0, l>0\\p>0}} \frac{k^2lp}{(p(k+l))^s} =   \sum_{\substack{k >0, l>0\\p>0}} \frac{l^2kp}{(p(k+l))^s} - \sum_{\substack{k >0, l>0\\p>0}} \frac{l^2kp}{(p(k+l))^s}$$
where the last equality comes from using the symmetry between $k$ and $l$.

Equating the two ways of breaking the series $(\ref{series})$, and rearranging, 
\begin{align*}
 \sum_n\frac{\tilde{W}_1(n)}{n^s} &= \sum_{\substack{k >0, l>0\\ q>0, p>0}} \frac{lk(l-k)q}{((pk+q(k+l))^s} + \sum_{\substack{k >0, l>0\\ q>0, p>0}} \frac{lk(l-k)p}{(p(k+l)+ql)^s} + \sum_{\substack{k >0, l>0\\ q>0, p>0}} \frac{lk(l-k)q}{(p(k+l)+ql)^s}  \\&+ \sum_{\substack{k >0, l>0\\ q>0, p>0}} \frac{l(k+l)kq}{(p(k+l)+ql)^s}
\end{align*}

Note now that the first term can be rewritten as,
$$ \sum_{\substack{k >0, l>0\\ q>0, p>0}} \frac{lk(l-k)q}{((pk+q(k+l))^s}=  \sum_{\substack{k >0, l>0\\ q>0, p>0}} \frac{lk(k-l)p}{((ql+p(k+l))^s} = -\sum_{\substack{k >0, l>0\\ q>0, p>0}} \frac{lk(l-k)p}{((ql+p(k+l))^s} $$ simply by renaming $k \leftrightarrow l$ and $p \leftrightarrow q$ from which we see that the first term cancels with the second term to give,
$$\sum_n\frac{\tilde{W}_1(n)}{n^s}  = \sum_{\substack{k >0, l>0\\ q>0, p>0}} \frac{lk(l-k)q}{(p(k+l)+ql)^s}  + \sum_{\substack{k >0, l>0\\ q>0, p>0}} \frac{l(k+l)kq}{(p(k+l)+ql)^s} = 2 \sum_{\substack{k >0, l>0\\ q>0, p>0}} \frac{kl^2q}{(p(k+l)+ql)^s}$$
Let 
$$ T(n):= \sum_{\substack{k >0, l>0\\ q>0, p>0 \\ p(k+l) +ql = n}} kl^2q$$
Then, $\sum_n \frac{\tilde{W}_1(n)}{n^s} = \sum_n 2\frac{T(n)}{n^s}$ which them implies, using Proposition \ref{prop:dirichletuniq} that 
\begin{gather}\label{FG}\sum_{\substack{k >0, l>0\\ q>0, p>0\\ pk + q(k+l) = n}} (l+k)klq = \tilde{W}_1(n) = 2T(n) =2\sum_{\substack{k >0, l>0\\ q>0, p>0 \\ p(k+l) +ql = n}} kl^2q \end{gather}
Now, 
\begin{align*}
T(n) = \sum_{\substack{k >0, l>0\\ q>0, p>0 \\ p(k+l) +ql = n}} kl (n - p(k+l)) =  \sum_{\substack{k >0, l>0\\ q>0, p>0 \\ p(k+l) +ql = n}} kln - \sum_{\substack{k >0, l>0\\ q>0, p>0 \\ p(k+l) +ql = n}} kl(k+l)p  
\end{align*}
Noting that $\sum_{\substack{k, l, p, q \in \NN \\ p(k+l) +ql = n}} kl(k+l)p  = \sum_{\substack{k, l, p, q \in \NN\\ q(k+l) +pl = n}} kl(k+l)q = \tilde{W}_1(n)$ we see that 
$$ T(n) = \sum_{\substack{k >0, l>0\\ q>0, p>0 \\ p(k+l) +ql = n}} kln - \tilde{W}_1(n)$$
Replacing $T(n)$ in \ref{FG} we get,
$$ \tilde{W}_1(n) = \frac{2}{3} n\sum_{\substack{k >0, l>0\\ q>0, p>0 \\ p(k+l) +ql = n}} kl $$
Now,
 $$\sum_{\substack{k,l, p, q \in \NN \\ p(k+l) +ql = n}} kl  = \sum_{\substack{k,l, p, q \in \NN \\ pk+ (p+q)l = n}} kl = \sum_{\substack{k,l, p, q \in \NN\\ q>p  \\ pk +ql = n}} kl =    \frac{1}{2}\sum_{\substack{k,l, p, q \in \NN \\ pk +ql = n}} kl - \frac{1}{2}\sum_{\substack{k,l, p \in \NN\\ p(k +l) = n}} kl$$
 and
 $$  \frac{1}{2}\sum_{\substack{k,l, p \in \NN\\ p(k +l) = n}} kl = \frac{1}{2}\sum_{\alpha | n} \sum_{k = 1}^{\alpha -1} k (\alpha - k) =\frac{1}{2}\sum_{\alpha |n} \left(\frac{\alpha^3}{6} - \frac{\alpha}{6} \right)= \frac{1}{12}\sigma_3(n) -\frac{1}{12}\sigma_1(n)$$
so that, using Proposition \ref{prop:sigma1convsigma1},
 \begin{align*}
\tilde{W}_1(n)&= \frac{2}{3}n\cdot \frac{1}{2}\sum_{\substack{k,l, p, q \in \NN \\ pk +ql = n}} kl - \frac{2}{3}n \cdot \frac{1}{2}\sum_{\substack{k,l, p \in \NN\\ p(k +l) = n}} kl\\
 &= \frac{2}{3}n\left(\frac{5}{24}\sigma_3(n) + \frac{1}{24}\sigma_1(n) - \frac{1}{4}n\sigma_1(n) -  \frac{1}{12}\sigma_3(n) +\frac{1}{12}\sigma_1(n)\right)\\
&= \frac{1}{12}n \sigma_3(n) + \frac{1}{12}n \sigma_1(n) - \frac{1}{6}n^2\sigma_1(n)
\end{align*} 
Finally,
$$W \equiv \Id_1\cdot \mu * \tilde{W} \equiv \Id_1 \cdot \mu * \Id_2 \cdot \phi * W_1 \equiv \Id_1 \cdot \mu * \Id_2 \cdot \phi * \Id_3 \cdot \mu * \tilde{W}_1 $$ so that, using Proposition \ref{prop:arithident}
\begin{align*}W &\equiv \Id_1 \cdot \mu * \Id_2 \cdot \phi * \Id_3 \cdot \mu * \left(\frac{1}{12}\Id_1 \cdot \sigma_3 + \frac{1}{12}\Id_1 \cdot \sigma_1 - \frac{1}{6}\Id_2\cdot \sigma_1\right)\\
&\equiv \Id_2 \cdot \phi * \Id_3 \cdot \mu * \Id_1 \cdot \left(\mu *\frac{1}{12}\sigma_3+\mu * \frac{1}{12} \cdot \sigma_1 - \mu*  \frac{1}{6}\Id_1\cdot \sigma_1\right)\\
&\equiv \Id_2 \cdot \mu *\left( \frac{1}{12}\Id_4 + \frac{1}{12}\Id_2 - \frac{1}{6}\Id_2\cdot\sigma_1 * \Id_1 \cdot \mu \right)\\
&\equiv \frac{1}{12} \Id_2 \cdot J_2 + \frac{1}{12} \Id_2 \cdot \epsilon - \frac{1}{6}\Id_2 \cdot (\sigma_1 * \mu) * \Id_1 \cdot \mu\\
&\equiv \frac{1}{12} \Id_2 \cdot J_2  - \frac{1}{6}\Id_3* \Id_1 \cdot \mu\\
&\equiv \frac{1}{12} \Id_2 \cdot J_2  - \frac{1}{6}\Id_1 \cdot J_2
\end{align*}\end{proof}

\newpage
\section{Classification of Cylinder Diagrams}\label{sec:cyldiagclass}
In this section we classify the distinct cylinder diagrams in $\calH(1,1)$. 

\begin{proposition}\label{prop:cyldiagtype}
There are 4 cylinder diagrams in $\calH(1,1)$. 
\end{proposition}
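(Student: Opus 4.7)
The plan is to enumerate combinatorial separatrix diagrams compatible with the $\calH(1,1)$ data and then verify realizability and uniqueness. First I would record the invariants: a surface $S\in\calH(1,1)$ has two cone points $P,Q$ each of angle $4\pi$, so each is a degree-$4$ vertex of the horizontal separatrix graph $\Gamma$, with prongs alternating ingoing/outgoing around the vertex. Thus $\Gamma$ has $V=2$ vertices and $E=4$ directed edges (saddle connections), and the ribbon graph $R(\Gamma)$ has $\chi(R(\Gamma))=V-E=-2$.

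Next I would bound the number $N$ of horizontal cylinders. Writing $B$ for the number of boundary components of $R(\Gamma)$, one has $B=2N$, since each cylinder contributes exactly two boundary circles and the pairing $P$ matches them off. Combining $\chi(R(\Gamma))=-2$ with $\chi=2-2g_R-B$ in the connected case, and with the analogous component-wise count in the disconnected case (two figure-eights, each with $\chi=-1$ and $1$ or $3$ boundary circles), one finds $B\in\{2,4,6\}$, hence $N\in\{1,2,3\}$. I would then handle each value of $N$ separately, aiming to show that $N=1$ and $N=3$ each yield a single diagram while $N=2$ yields two.

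The main combinatorial step is, for each $N$, to enumerate the ribbon graphs and admissible boundary pairings up to isomorphism. I would classify edges by endpoint type via $(a,b,c',d)=(\#P\!\to\!P,\ \#P\!\to\!Q,\ \#Q\!\to\!P,\ \#Q\!\to\!Q)$; matching prong counts at each vertex forces $a+b=a+c'=c'+d=b+d=2$, so $(a,b,c',d)\in\{(0,2,2,0),(1,1,1,1),(2,0,0,2)\}$. For each of the three types I would enumerate the cyclic orderings of prongs at $P$ and at $Q$ respecting the alternation condition, read off the boundary cycles of the resulting ribbon graph, and list the admissible pairings of oppositely oriented boundaries; I would then quotient by the natural symmetries, namely the swap $P\leftrightarrow Q$, relabelling of parallel edges, and simultaneous reflection of the cyclic order at a vertex (which corresponds to reversing the transverse orientation). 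For each surviving equivalence class, realizability can be certified on the spot by exhibiting the square-tiled prototypes drawn in Figure~\ref{fig:cylindertypes}, producing the four diagrams $A$, $B$, $C$, $D$.

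The principal obstacle will be the bookkeeping in the enumeration: several symmetries act simultaneously, and the type $(1,1,1,1)$ admits the largest family of cyclic orderings and pairings, most of which collapse under symmetry. Organising the enumeration as a table indexed by $(N,\ \text{edge-type},\ \text{cyclic order at }P,\ \text{cyclic order at }Q,\ \text{pairing})$ and then systematically quotienting by the symmetry group should make the argument airtight and show cleanly that exactly four equivalence classes survive.
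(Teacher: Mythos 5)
Your setup is correct and your strategy is essentially the paper's: both arguments reduce Proposition~\ref{prop:cyldiagtype} to a finite enumeration of two-vertex, four-valent ribbon graphs with alternating in/out prongs, together with an admissible boundary pairing, quotiented by symmetries, with realizability of the survivors certified by the prototypes of Figure~\ref{fig:cylindertypes}. Your preliminary reductions (the edge-type count $(a,b,c',d)\in\{(0,2,2,0),(1,1,1,1),(2,0,0,2)\}$ and the Euler-characteristic bound $N\in\{1,2,3\}$ on the number of cylinders) are correct and would organize the case analysis nicely; the paper instead enumerates the $4!=24$ matchings of outgoing to incoming prongs and quotients by the order-four group generated by the $\pi$-rotation and the vertex swap, leaving six candidate graphs (Figure~\ref{fig:6graphs}).

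The gap is that the decisive step is never executed: you describe the table you would build, but the enumeration of cyclic orders and pairings within each stratum --- which is the entire content of the proposition --- is deferred as ``bookkeeping,'' so as written nothing establishes that exactly four classes survive, nor that the split across $N=1,2,3$ is $1,2,1$. Two further points need care when you do carry it out. First, your symmetry group includes simultaneous reversal of the cyclic orders at the two vertices; this is orientation-reversing, it exchanges positively and negatively oriented boundary components, and it sends a cylinder diagram to its mirror image, which is a priori a different diagram. The paper quotients only by the $\pi$-rotation (the hyperelliptic involution) and the vertex swap, both orientation-preserving; if you also quotient by reflection you must verify that each surviving class is isomorphic to its mirror, or you risk undercounting. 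Second, besides exhibiting prototypes for the four survivors, you must show that the discarded ribbon graphs admit no admissible pairing (the paper rules out its graphs i and iv because the numbers of positively and negatively oriented boundary circles disagree) and that each survivor admits exactly one pairing --- without this uniqueness check the count could exceed four.
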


\begin{proof}
\textcolor{white}{blank}

\medskip
\noindent
\begin{minipage}{4.25in}
Any surface in $\calH(1,1)$, has two singularities with angle $4\pi$ each so that any ribbon graph associated to it has 2 vertices, each of valence 4. There are two outgoing edges in each vertex. Label the outgoing edges for one of the vertices as $a$ and $b$, and the other outgoing edges to be $c$ and $d$. Similarly, there are two incoming edges for each vertex, and we label these $1$, $2$ and $3$, $4$ as shown in the figure on the side.
\end{minipage}\hspace{0.3cm}
\begin{minipage}{2.5in}
\begin{center}

\includegraphics[scale=0.3]{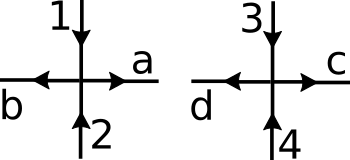}
\end{center}

\medskip
Two singular vertices and labels of the edges coming in and going out of the vertices.

\end{minipage}
\medskip

Define 
$$G = \{ \{(x_i,y_i)\}_{i=1}^4| \{x_i\}_{i=1}^4 = \{a, b, c, d\},  \{y_i\}_{i=1}^4 = \{1, 2, 3, 4\}\}$$ 
Each element of $G$ is a set of 4 tuples each of the form (letter, number), and each such element determines a graph on the two vertices. Note that $|G| = 24$.
Let $s: G \rightarrow G$ defined by sending $$a\rightarrow d, b \rightarrow c, c \rightarrow b, d \rightarrow a, 1 \rightarrow 4, 2 \rightarrow 3, 3 \rightarrow 2, 4 \rightarrow 1$$
This corresponds to a $\pi$ rotation of the graphs which is an order two action on $G$. Since rotating by $\pi$ does not change the surface obtained (as surfaces in $\calH(1,1)$ are fixed by such a rotation), we do not want to count graphs that differ by $s$.
Similarly, 
$t: G \rightarrow G$ defined by 
$$a\rightarrow c, b \rightarrow d, c \rightarrow a, d \rightarrow b, 1 \rightarrow 3, 2 \rightarrow 4, 3 \rightarrow 1, 4 \rightarrow 2$$
also is an order 2 action on $G$ which corresponds to swapping the vertices. Since we do not distinguish between the two vertices, we do not want to count graphs that differ by $t$. The group $\ideal{s,t}$ generated by these elements has order 4, and hence we see that there are at most 24/4 = 6 classes of graphs defined by elements in $G$ that potentially give different cylinder diagrams. Figure \ref{fig:6graphs} shows representatives of each class.

\begin{figure}[h!!]
   \centering
\begin{tabular}{ccccc}
\includegraphics[width=4.2cm]{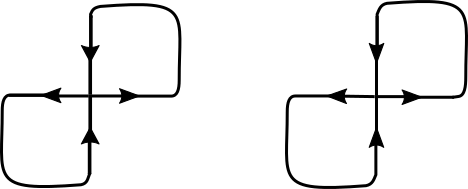}& \hspace{0.5cm} &
\includegraphics[width=4.2cm]{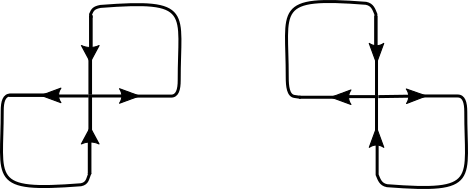}&\hspace{0.5cm} &
\includegraphics[width=4.2cm]{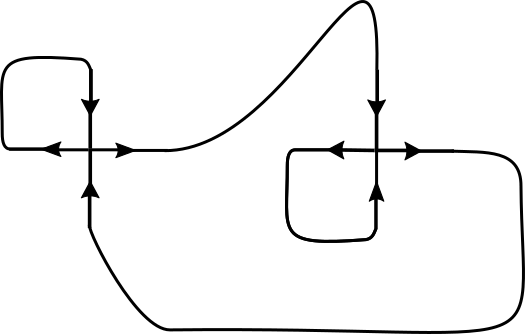}\\[10pt]
i & \hspace{0.5cm} & ii &  \hspace{0.5cm} &  iii\\ [10pt]
\includegraphics[width=4.2cm]{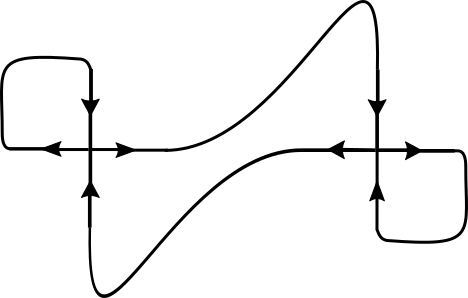}& \hspace{0.5cm} &
\includegraphics[width=4.2cm]{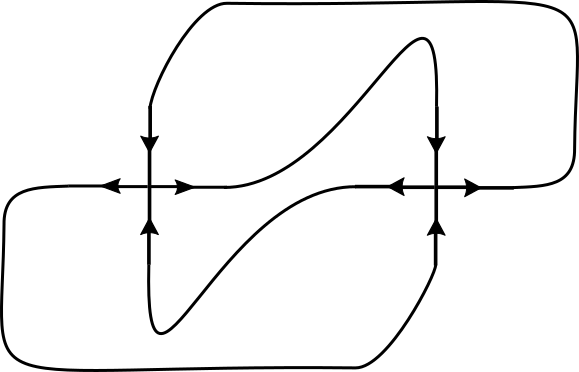}&\hspace{0.5cm} &
\includegraphics[width=4.2cm]{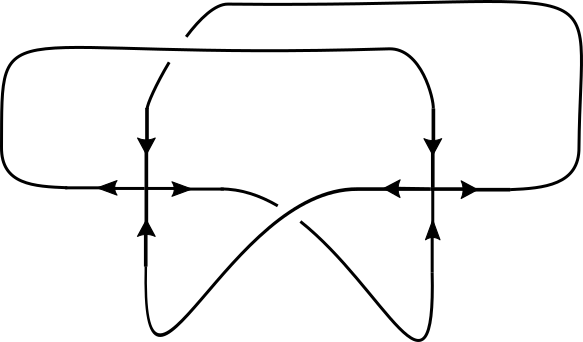}\\[10pt]
iv & \hspace{0.5cm} & v &  \hspace{0.5cm} &  vi\\[10pt] 
\end{tabular}
\caption{Directed graphs associated to the ribbon graphs of surfaces in $\calH(1,1)$}
\label{fig:6graphs}
\end{figure}

Among these, only graphs ii), iii), v) and vi) give cylinder diagrams. Consider the ribbon graph for i). After choosing an orientation on the ribbon graph, we note that the number of positively oriented boundary components is not equal to the number of negatively oriented boundary components. Hence, we cannot find a valid pairing to form a separatrix diagram. The same holds for iv). 

Finally one checks that each of these directed graphs and their corresponding ribbon graphs have exactly one pairing P each that result in a cylinder diagram. Hence, there are exactly 4 cylinder diagrams in $\calH(1,1)$. \end{proof}

We now present the proofs of the uniqueness of parameters determining $n$-square surfaces with diagrams A, B, C and D. In what follows, all shear parameters of cylinders will be assumed to be modulo the length of the corresponding cylinder. 
\begin{notn}
Given a tuple of parameters $v$ for a square-tiled surface of a certain diagram in $\calH(1,1)$, we will denote by $S(v)$ the associated square-tiled surface.
\end{notn}
\begin{proof}[Proof of Proposition \ref{prop:typeIparam}]
Given an $n$-square surface $S$ in $\calH(1,1)$ with cylinder diagram $A$ and parametrized by $(p, j, k, l, m, \alpha)$ not necessarily in $\Sigma_A$, we first note that we can find parameters in $\Sigma_A$ that realize $S$. Indeed, we can cut and paste so that the the shortest horizontal saddle connection appears on the bottom edge of the cylinder. Moreover,
\begin{itemize}
\item if $S$ has two shortest horizontal saddle connections of length $j = l$, but $k > m$. then, $S$ can be realized by $(p, l, m, j, k, \alpha + k-m) \in \Sigma_A$.
\item if $S$ has two shortest horizontal saddle connections of length $k = m$, and $j < l$. then, $S$ can be realized by $(p, m, j, k, l, \alpha + k+l) \in \Sigma_A$.
\item if $S$ has three horizontal saddle connections of equal length, for instance $k = l = m$,  then, $S$ can be realized by $(p, k, l, m, j, \alpha+j+k) \in \Sigma_A$.
\item if $S$ has all equal length horizontal saddle connections, but shear $n/(2p) \leq \alpha < n/p$, then, $S$ can be realized by $(p, j, k, l, m, \alpha - n/4) \in \Sigma_A$. 
\end{itemize}  
So, $\Sigma_A$ suffices to parametrize $n$-square surfaces with cylinder diagram A in $\calH(1,1)$. Next, we argue that distinct elements in $\Sigma_A$ parametrize distinct surfaces. Let $(p, j, k, l, m, \alpha)$ and $(p', j', k', l', m', \alpha') \in \Sigma_A$ parametrize the same surface $S$. Comparing the height of the cylinder in two parametrizations, we get $p = p'$. 
Note that in our parametrizations, $j$ and $j'$ will always be the length of the shortest horizontal saddle connections, no matter the number of such saddle connections. Hence, $j = j'$. Next, we will show $\alpha = \alpha'$. Assume without loss of generality that $\alpha' > \alpha$. Shear $S$ by $-\alpha$ and get a surface $S_0$ parametrized by $(p, j,k,l,m, 0)$ and $(p,j,k',l',m', \alpha' - \alpha)$. We note that $S_0$ has a vertical cylinder of length $p=p'$ and height $j=j'$. Call this cylinder $\calC$. 

If all horizontal saddle connections are equal in $S$, then $k = l = m = k' = l' = m'$. Then if $0 \neq \alpha - \alpha'  < n/2$, then, $S_0$ will not have a vertical cylinder of length $p=p'$ and height $j = j'$. Hence, $\alpha = \alpha'$ in this case.

If $S$ has three shortest horizontal saddle connections, then $k = l = k' = l'$. This then implies $m = m'$ since the total length of the cylinder must be equal in both parametrizations. Then for the existence of $\calC$,  it must be the case that $\alpha - \alpha'$ is either 0 or $n-(m-k)$. However, in the latter case $S_0$ admits 3 non-closed saddle connections of holonomy $(j, p)$ a contradiction since the shear 0 case only admits 2.

If $S$ has two shortest horizontal saddle connections, we first orient the horizontal saddle connections from left to right. Then, if the two shortest saddle connections do not both emanate from the same vertex, then they are parametrized by $j, k$ and $j', k'$ so that $k = k'$. Moreover, for $\calC$ to exist in $S_0$, either $\alpha - \alpha'=0$ or $\alpha' - \alpha = j+k$. In the latter case, there exists a closed saddle connection of holonomy $(j+k, p)$ which is not present in the 0 shear case, unless $m = 3k$. But in that case, the non-trivially sheared surface has a closed saddle connection of holonomy (4k, p). For the 0-shear case to have this curve, $l$ needs to be equal to $k$ a contradiction. So that $\alpha' = \alpha$. To show $l = l'$ and $m = m'$, we consider the consider 5 shortest simple closed curves in $S_0$ that have vertical holonomy $p$. In one parametrization the holonomy vectors of these simple closed curves lie in the set
$$\{ (0, k), (j+m, p), (l+m, p), (-j-k, p), (m-k, p)\}$$ and in another parametrization, the corresponding set of holonomy vectors is
$$ \{(0, p), (j+m', p), (l'+m', p), (-j-k', p), (m'-k', p)\}$$
These sets must be equal, as they parametrize the same objects. Hence, we conclude $m=m'$ and $l = l'$.

If the two shortest saddle connections both emanate from the same vertex, we know they are parametrized by $j, l$ and $j',l'$ respectively, so that $l = l'$ since $j = j'$ has been established. Also, in this case, $k \leq m$ and $k'\leq m'$, as per our parameter set. But $\{k, l, m\} = \{k', l', m'\}$ so that $k= k'$ and $m= m'$. We argue $\alpha = \alpha'$ similarly as before.

 If $S$ has a unique shortest horizontal saddle connection, for $\calC$ to exist in $S_0$, it must be that $\alpha' - \alpha = 0 \implies \alpha = \alpha'$. Then comparing the parametrizations of the holonomy vectors of the 5 shortest simple closed curves in $S_0$ as above, we can conclude that $- j - k = - j' = k' \implies k = k'$, $m - k = m' - k' \implies m = m'$ and finally $l = l'$.
\end{proof}

Next we prove the uniqueness of parameters for cylinder diagram B.

\begin{proof}[Proof of Proposition \ref{prop:typeIIparam}]
Let $(p, q, k, l, m, \alpha, \beta)$ and $(p', q', k', l', m', \alpha', \beta') \in \Sigma_B$ parametrize the same surface. Since the horizontal singular closed curve that contains only one singularity has length $m$ in one parametrization and $m'$ in another parametrization, we get that $m = m'$.
Considering the length of the horizontal core curve of the longer cylinder in both parametrizations, we get that $k'+l'+m' = k+l+m \implies k'+l' = k+l$.
Similarly, considering the heights of the cylinders in both parametrizations gives us $p = p'$ and $q = q'$ since two cylinders are of different lengths.
Assume $\beta' > \beta$. Now, define $S_0$ by shearing the shorter cylinder of $S$ by $-\beta$. The parameters for $S_0$ then are $(p, q, k, l, m, \alpha, 0)$ and $(p', q', k', l', m', \alpha', \beta' - \beta)= (p, q, k', l', m, \alpha', \beta' - \beta)$. Note that there exists exactly one vertical saddle connection in $S_0$ transverse to the core curve of the shorter cylinder. Since both the boundary components of this cylinder have only one singularity, the only way for there to be a vertical saddle connection is if the horizontal shear is zero. Hence, $\beta' - \beta = 0 \implies \beta = \beta'$.
Now, assume without loss of generality $\alpha' > \alpha$, and obtain $S_{00}$ from $S_0$ by shearing the longer cylinder by $-\alpha$.  Then, the parameters for $S_{00}$ will be $(p,q,k,l,m, 0, 0)$ and $(p,q,k',l',m,\alpha' - \alpha, 0)$. Note that $S_{00}$ has a vertical simple closed curve of length $p+q$, crossing the core curves of both the horizontal cylinders exactly once each without any cone points. From figure \ref{fig:typeIIshearequality} the shear in the longer cylinder must be zero, so that $\alpha' - \alpha = 0 \implies \alpha = \alpha'$. Now consider the length of the horizontal saddle connections we get that either i) $k = k'$ and $l =l'$ or ii) $k = l'$ and $l = k'$. We will show that the case ii) leads to a special case of i).

\begin{figure}[h!!]
\begin{tabular}{c}
\includegraphics[scale=0.18]{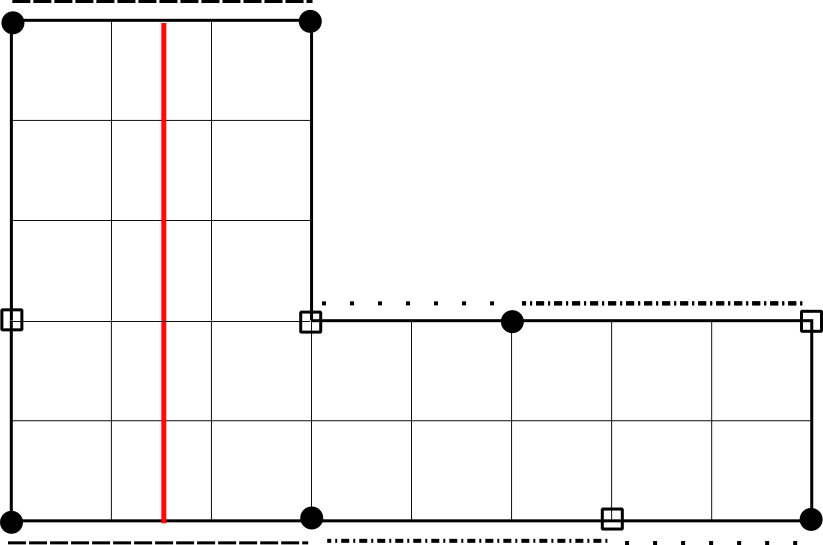} \\
 \includegraphics[scale=0.18]{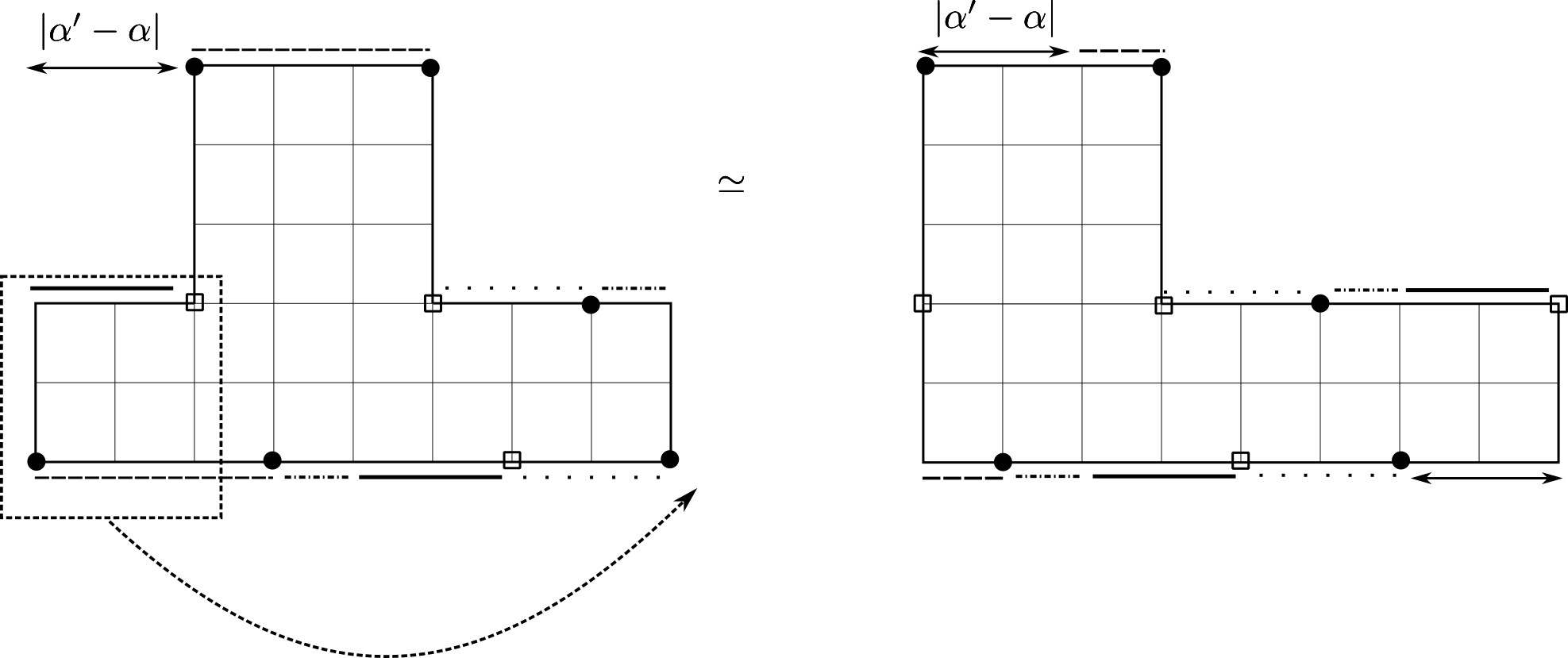}
\end{tabular}
\caption{Diagram B argument for uniqueness of shear parameters. Top: $S_{00}$ with a non-singular vertical simple closed curve of length $p+q$, crossing the core curves of both the horizontal cylinders exactly once. Bottom: $S_{00}$ with a nontrivial $\alpha - \alpha'$ shear on the longer cylinder.}
\label{fig:typeIIshearequality}
\end{figure}
\medskip
\noindent
\begin{minipage}{4.5in}

Consider the set of the 4 shortest non-closed saddle connections in $S_{00}$ with vertical holonomy $-p$, transverse to the core curve of the longer cylinder and non-zero horizontal holonomy. These saddle connections have holonomy vectors in the set 
$$\{(m, -p), (k+l, -p), (-k,-l), (l-k, -p)\} $$ 
according to one parametrization, and 
$$\{(m, -p), (k'+l', -p), (-k'-l', (l'-k', -p)\} $$
according to the other parametrization. Assume $k = l'$ and $l = k'$. Then, equality of these sets implies that 
$$l' - k' = l - k \implies k - k' = k' - k \implies k = k'.$$ See Figure on the side for these curves.
\end{minipage}\hspace{0.3cm}
\begin{minipage}{2.2in}
\begin{center}
\includegraphics[scale=0.2]{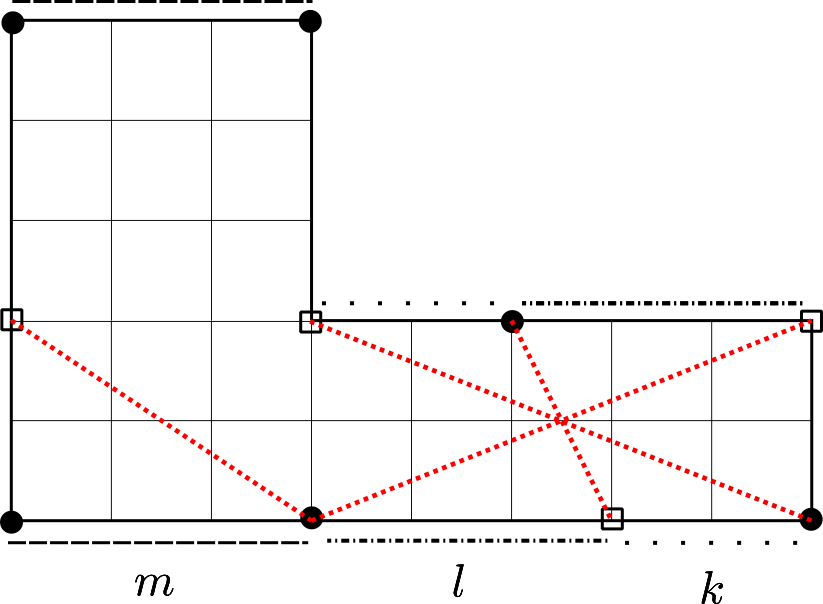}
\end{center}
\medskip
Special short curves with vertical holonomy $-p$ in $S_{00}$
\end{minipage}
\end{proof}

Next we prove uniqueness of parameters for cylinder diagram C.
\begin{proof}[Proof of Proposition \ref{prop:typeIIIparam}]
Given an $n$-square surface $S$ in $\calH(1,1)$ with cylinder diagram C and parametrized by $(p,q, k, l, m, \alpha, \beta)$ (not necessarily in $\Sigma_C$), we first note that we can find parameters in $\Sigma_C$ that realize $S$.  Indeed, if $k > m$, then the parameters $(q,p,m,l,k,\beta, \alpha) \in \Sigma_C$ give $S$ as shown in Figure \ref{fig:typeiiicutandpasteex}. 
\begin{figure}[h!]
\includegraphics[scale=0.3]{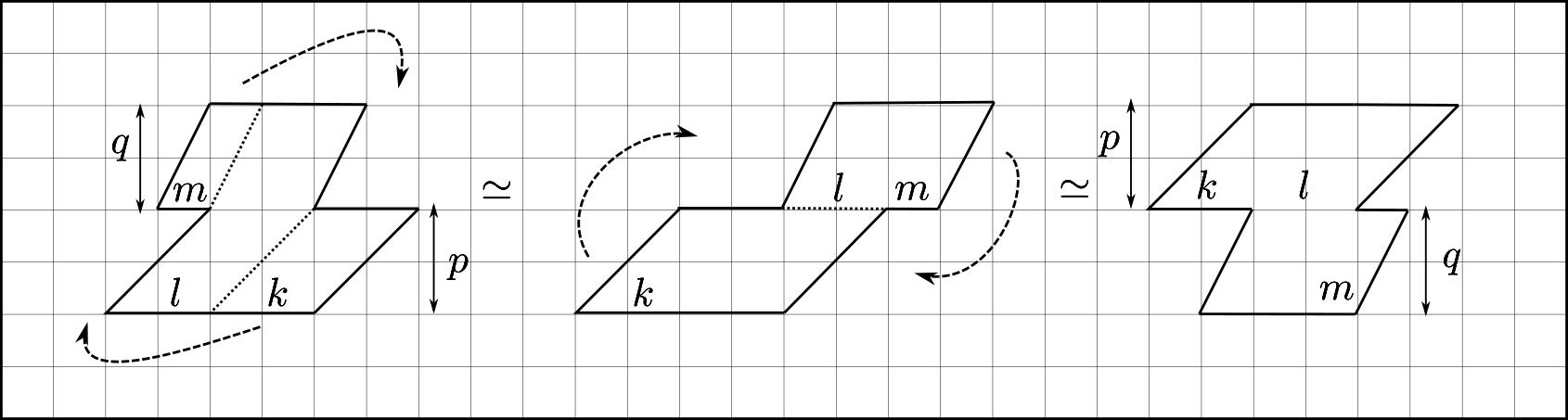}
\caption{Cut and paste moves showing $S(p,q,k,l,m, \alpha, \beta) \simeq S(q,p,m,l,k,\beta, \alpha)$ for diagram C}
\label{fig:typeiiicutandpasteex}
\end{figure}
Similarly, given $S(p,q, k, l, k, \alpha, \beta)$ with $p > q$, then we note that $S(q,p,k,l,k, \beta, \alpha)$  $\simeq$ $S(p,q, k, l, k, \alpha, \beta)$ are the same surface, and finally, given $S(p,p, k, l, k, \alpha, \beta)$ with $\alpha > \beta$ we see that $S(p,p,k,l,k,\alpha, \beta) \simeq S(p, p, k, l, k, \beta, \alpha)$.

Now we show that distinct parameters will correspond to distinct surfaces in $\Sigma_C$. Let $v:= (p, q, k, l, m, \alpha, \beta)$ and $v':=(p', q', k', l',m', \alpha', \beta') \in \Sigma_C$ parametrize the same surface $S$. We will show $v = v'$.
Noting that $l$ and $l'$ parametrizes the length of the horizontal saddle connections shared by the two cylinders, we conclude $l = l'$. 

When the length of the two cylinders is different, comparing their parametrizations gives $k +l = k'+l' \implies k = k'$ and $m+l = m'+l' \implies m= m'$. Moreover, $p$ and $p'$ both parametrize the height of the same cylinder of length $k+l$. Hence, $p = p'$. Similarly, $q = q'$. Next, assume $\alpha' \geq \alpha$, and obtain $S_0$ from $S$ by shearing the shorter cylinder by $-\alpha$. Then, the parameters for $S_0$ are $(p, q, k, l, m, 0, \beta)$ and $(p, q, k, l, m, \alpha' - \alpha, \beta')$. Observe that $S_0$ has a vertical simple closed curve passing through a singularity of length $p$, transverse to the core curve of the shorter cylinder. This only happens when $ \alpha'-\alpha =0$ since given a singularity, the shorter cylinder has exactly one singularity in each of the boundary curves. Hence $\alpha = \alpha'$. Using a similar argument, one concludes $\beta = \beta'$.

When the length of the two cylinders is equal, and heights are different, then comparing the parametrizations of the length gives $k = k' = m' = m$ while comparing the one for heights of the cylinders gives $p = p'$ and $q = q'$. Now, consider the closed saddle connections of vertical holonomy $p$, intersecting the core curve of the width $p$ cylinder exactly once. There are two such saddle connections, and the holonomy vectors are $(\alpha, p), (\alpha - (k+l), p)$ in the first set of parameters and $(\alpha', p), (\alpha' - (k+l), p)$ in the second set of parameters.The only way for these sets to be equal is to have $\alpha = \alpha'$. A similar argument will show that $\beta = \beta'$. 

When the length and height of cylinders are equal, again, we conclude $k= k'=m'=m$ and $p = p' = q' = q$. consider the shortest closed saddle connection on the surface with positive vertical holonomy. In one parametrization, this saddle connection has holonomy vectors $(\alpha, p)$ (since $\alpha \leq \beta$.). In the other parametrization we get $(\alpha', p)$ (again since $\alpha' \leq \beta'$). Hence, $\alpha = \alpha'$. Doing a similar argument in the more sheared cylinder, we get that $\beta = \beta'$.\end{proof}

Finally, we will prove uniqueness of parameters for cylinder diagram D.

\begin{proof}[Proof of Proposition \ref{prop:typeIVparam}]
Given an $n$-square surface $S$ in $\calH(1,1)$ with cylinder diagram D and parametrized by $(p,q,r, k, l, \alpha, \beta, \gamma)$ (not necessarily in $\Sigma_D$), we first note that we can find parameters in $\Sigma_D$ that realize $S$.  
Indeed, if $k > l$, then $S(p,q,r, k,l, \alpha, \beta, \gamma) \simeq S(r, q, p, l, k, \gamma, \beta, \alpha)$.
Similarly, if $p >r$, then, $S(p, q, r, k, k, \alpha, \beta, \gamma) \simeq S(r, q, p, k, k, \gamma, \beta, \alpha)$. Finally if $\alpha > \gamma$, then $S(p,q, p, k, k, \alpha, \beta, \gamma) \simeq S(p, q, p, k, k, \gamma, \beta, \alpha)$. 

We will now show that distinct tuple of parameters in $\Sigma_D$ correspond to distinct surfaces. Let $v:= (p, q, k, l, m, \alpha, \beta)$ and $v':=(p', q', k', l',m', \alpha', \beta') \in \Sigma_D$ parametrize the same surface $S$. We will show $v = v'$. Comparing the two parametrizations of the heights of the longest cylinder, we get $q =q'$. 

If the lengths of the 3 cylinders are all distinct, comparing the different parametrizations of these lenghts give us $k= k'$ and $l =l'$. Similarly, comparing the parametrizations of the cylinder heights then gives $p = p'$, $r = r'$.  Now obtain a surface $S_0$ from $S$ by shearing the shortest cylinder by $-\alpha$. The new parameters will be $(p, q, r, k, l, 0, \beta, \gamma)$ and $(p, q, r, k, l, |\alpha' - \alpha|, \beta', \gamma')$.  $S_0$ has a vertical saddle connection of length $p$ that is transverse to the core curve of the shortest cylinder. Since there is only one singularity in each of the boundaries of the shortest cylinder, there can be no nontrivial shear for this saddle connection to exist. Hence, $\alpha - \alpha' = 0 \implies \alpha = \alpha'$. A similar technique yields $\gamma = \gamma'$. To prove $\beta = \beta'$ we obtain $S_{00}$ from $S_0$ by shearing the longest cylinder by $-\beta$, to get parameters $(p, q, r, k, l, 0, 0, \gamma)$ and $(p, q, r, k, l, 0, \beta' - \beta, \gamma)$.$S_{00}$ has exactly two vertical saddle connections that are of length $q$ and are transverse to the core curve of the longest cylinder. For both saddle connections to exist, $\beta' - \beta = 0 \implies \beta = \beta'$.  See Figure \ref{fig:typeivsingledouble}
\begin{figure}
\begin{tabular}{ccc}
\includegraphics[scale=0.22]{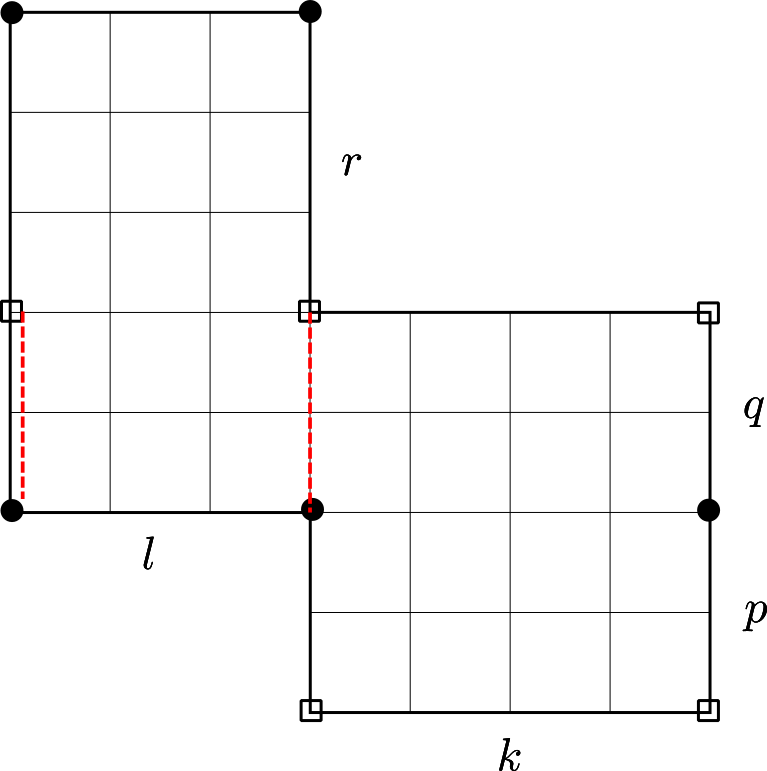} & \hspace{1cm} & \includegraphics[scale=0.22]{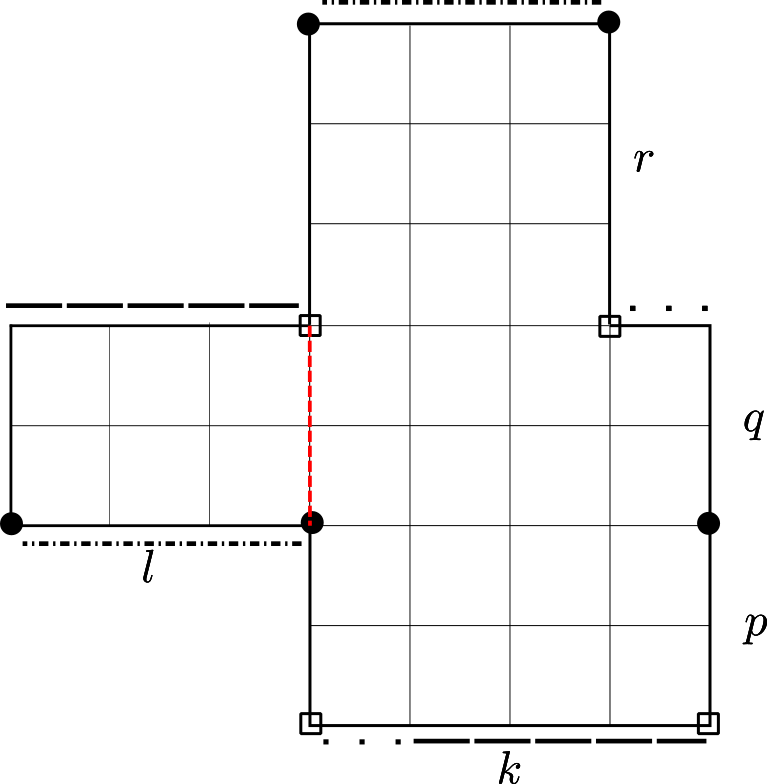}
\end{tabular}
\caption{Diagram D argument for uniqueness of the shear parameter of the longest cylinder. Left: $S_{00}$ with two special vertical saddle connections. Right: If shear is $|\beta-\beta'| = l$, then there is only one special vertical saddle connection.}
\label{fig:typeivsingledouble}
\end{figure}

If the length of the two shortest cylinders is the same, then $k = k' = l' = l$ is this length. Comparing the parametrization of heights of these cylinders we get $\{p, r\} = \{p', r'\}$, but noting that $p \leq r$ and $p' \leq r'$ for this case gives us $p = p'$ and $r = r'$. If $p < r$, similar arguments to above can be used to prove $\alpha = \alpha'$ and $\beta = \beta'$. To then see that $\beta = \beta'$, we obtain a surface $S_0$ from $S$ by shearing the smaller cylinders $-\alpha$ and $-\gamma$ respectively and the longer cylinder $-\beta$. Then, $S_0$ has parameters $(p, q, r, k, l, 0, 0, 0)$ and $(p, q, r, k, l, 0, 0, \beta' - \beta)$, so that $S_0$ has two special vertical saddle connections of length $q$ that are transverse to the core curve of the longest cylinder.

\medskip
\noindent
\begin{minipage}{3.5in}
Existence of these saddle connections asserts that either $\beta' - \beta = 0$ or $\beta' - \beta = k$. If $\beta'-\beta = k$, the surface formed by those parameters has a single vertical cylinder, a contradiction. See Figure on the side. The directed vertical curve is the core curve of the vertical cylinder. If $p = r = r' = p'$, consider the shortest saddle connection with vertical holonomy $p$ that is transverse to the core curve of one of the short cylinders in $S$. Since $\alpha \leq \gamma$ and $\alpha' \leq \gamma'$, this saddle connection will have holonomy vector $(\alpha, p)$ in one parametrization, and $(\alpha', p)$ in the other parametrization. Hence, we conclude $\alpha = \alpha'$. Arguments similar to ones that have been earlier presented in this proof for the other shear parameters will yield $\gamma = \gamma'$ and $\beta = \beta'$. 
\end{minipage}\hspace{0.3cm}
\begin{minipage}{3in}
\begin{center}
\includegraphics[scale=0.2]{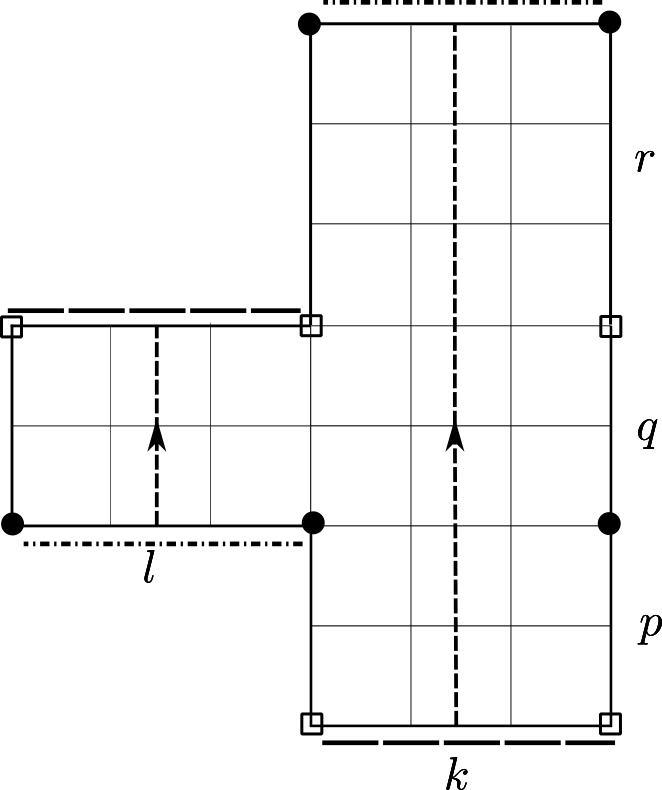}
\end{center}

\medskip
The case when $\beta'-\beta = k$ and the surface decomposes into a single cylinder in the vertical direction. \end{minipage}

\medskip

\end{proof}

\newpage
\section{Primitivity criteria by cylinder diagram}\label{sec:primcriterion}

In this section we provide proofs for the primitivity criteria stated in Lemma \ref{lem:allprimitivity} for cylinder diagrams A and C, and note that the proof for the primitivity conditions for cylinder diagrams B and D are similar to that of C. 
\begin{lemma}[Primitivity in Cylinder Diagram A]\label{prop:typeIprimitivity}
A square-tiled surface in $\calH(1,1)$ with cylinder diagram A and parametrized by $(p, j, k, l, m, \alpha)$ is primitive if and only if $p=1$ and $(j+k) \wedge (k+l) \wedge n = 1$.
\end{lemma}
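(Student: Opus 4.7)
The plan is to use Proposition~\ref{prop:primsurfprimgroup} to recast primitivity of $S$ as primitivity of $\Mon(S)\le S_n$, and to establish both implications by exhibiting (or ruling out) explicit block systems, cross-checking with the absolute-period criterion of Lemma~\ref{lem:latticeprim}.

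For necessity of $p=1$, label the squares of the unique cylinder by pairs $(i,j)$ with $0\le i < n/p$ and $0\le j < p$. The right permutation $\sigma$ cyclically shifts $i\mapsto i+1$ within each row, and the top permutation $\tau$ cyclically permutes the rows (with the horizontal shear $\alpha$ applied only when wrapping from row $p-1$ back to row $0$). Hence the partition of squares into rows is $\Mon(S)$-invariant and forms a nontrivial block system whenever $p>1$, obstructing primitivity.

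Now assume $p=1$, so the cylinder is a ring of $n$ squares $\{0,1,\dots,n-1\}$ on which $\sigma=(0\,1\,\cdots\,n-1)$. Using the reparametrization of Lemma~\ref{prop:typeIreparam}, I place the four cone-point occurrences at positions $x<y<z<t\le n$ with $z-x=j+k$ and $t-y=k+l$, with $P_1$ at positions $x,z$ and $P_2$ at positions $y,t$. The closed saddle connections of $S$ are the diagonal segments joining each cone-point occurrence on the top boundary to a matching occurrence of the same cone point on the bottom; together with their horizontal wraps, they produce absolute periods including $(j+k,0)$, $(k+l,0)$, $(n,0)$, and a vector of the form $(\ast,1)$. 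The lattice generated by these vectors equals $\ZZ^2$ precisely when $(j+k)\wedge(k+l)\wedge n = 1$.

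This pins down both directions. If $(j+k)\wedge(k+l)\wedge n = 1$ then $\AbsPer(S)=\ZZ^2$ and Lemma~\ref{lem:latticeprim} yields primitivity. Conversely, if $d:=(j+k)\wedge(k+l)\wedge n > 1$, then the shifts induced by $\tau$ on the arcs between consecutive cone points are all congruent modulo $d$ (since $d\mid z-x$, $d\mid t-y$, and $d\mid n$), while $\sigma$ shifts residues uniformly by $1$; the $d$ residue classes modulo $d$ therefore form a nontrivial block system for $\Mon(S)$, ruling out primitivity. The main obstacle will be pinning down the precise form of $\tau$ and the explicit list of closed saddle connections, which relies on the diagram~A-specific combinatorics of the cylinder boundary (namely, the alternation of $P_1$ and $P_2$ on top and bottom). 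Once the parametrization of Lemma~\ref{prop:typeIreparam} is seen to encode this combinatorics faithfully, the necessary and sufficient arithmetic condition falls out of a straightforward GCD computation in the spirit of Lemma~\ref{lem:lattice1}.
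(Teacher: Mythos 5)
Your proof is correct and follows essentially the same route as the paper's: rows of the cylinder form a nontrivial block system forcing $p=1$, residue classes modulo $d=(j+k)\wedge(k+l)\wedge n$ form a block system forcing $d=1$, and sufficiency follows from the horizontal and vertical closed saddle connections generating $\AbsPer(S)=\ZZ^2$ together with Lemma~\ref{lem:latticeprim} (the paper additionally normalizes $\alpha=0$ by $SL_2(\ZZ)$-invariance, which you avoid by allowing the $\tau$-shifts to be congruent to a common value mod $d$). One caution: do not invoke Lemma~\ref{prop:typeIreparam} here, since its proof in the paper relies on this very primitivity criterion; instead define $x<y<z<t$ directly as the labels of the squares carrying cone points on their upper corners (with the paper's conventions the alternating gaps are $j+m$ and $m+l$ rather than $j+k$ and $k+l$, but these agree up to sign modulo $n$, so the gcd condition is unaffected).
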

\begin{proof}
Give square-tiled surface with parameters $(p, j, k, l, m, \alpha)$, it suffices to show that the statement is true for $\alpha = 0$ since primitivity of a square-tiled surface is an $SL(2, \ZZ)$ orbit invariant. Hence let $S:= S(p,j,k,l,m,0)$ be given.

$(\Rightarrow)$

Let $S$ be primitive. From Proposition \ref{prop:primsurfprimgroup} we know that $G:=\Mon(S)$ is primitive. Fix a labelling of $S$, and let $\sigma$ be the right permutation and $\tau$ be the top permutation associated to the labelled $S$. Since $S$ is made up of a maximal single horizontal cylinder, we know $\sigma$ is a product of $k$ disjoint cycles of equal length so that

$$ \sigma = u_1 \dots u_k$$ with $u_1 = (x_1 \dots x_a  z_1 \dots z_b y_1 \dots y_c t_1 \dots t_d)$ where $x_1, y_1, z_1$ and $t_1$ are so that $[\sigma, \tau] = (x_1 y_1)(z_1t_1)$.

\begin{figure}[!ht!]
\includegraphics[scale=0.25]{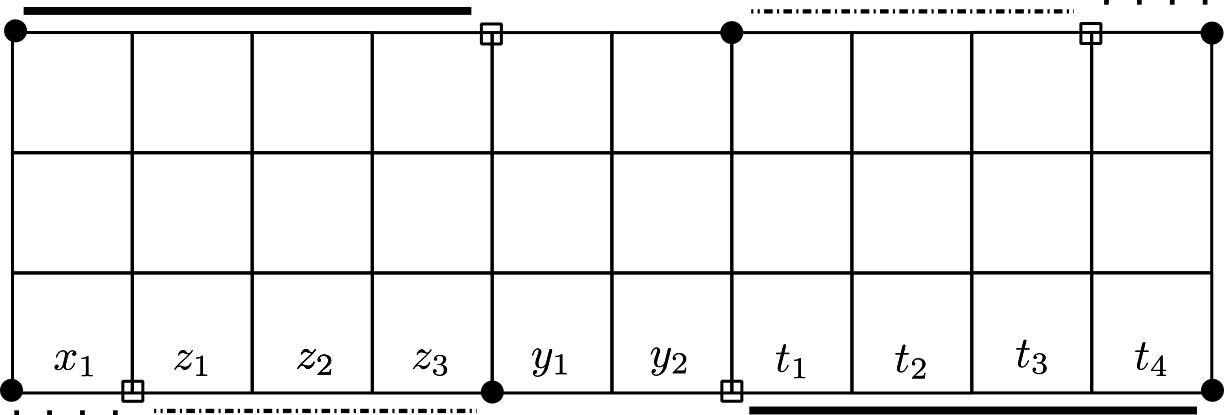}
\caption{A 1-cylinder surface in $\calH(1,1)$ with $p = 3, j= 1, k=2, l=2, m=4, \alpha = 9$}
\end{figure}

Define $\Delta = \supp(u_1)$. We will show that $\Delta$ is a block for $G$. 

Let $w \in G$. Since $\sigma$ and $\tau$ generate $G$, $w$ is written as a product of powers of $\sigma$ and $\tau$. Note that $\sigma (\supp(u_i)) = \supp (u_i)$ and $\tau(\supp(u_i)) = \supp(u_{i+1\modd k})$. But $\supp(u_i) \cap \supp(u_j) = \emptyset$ when $i \neq j$. So, either $w(\Delta) = \Delta$ or $w(\Delta) \cap \Delta = \emptyset$. As $w \in G$ was arbitrary, this implies that $\Delta$ is a block for $G$. As $\Delta$ contains more than one element (for instance $x_1$ and $z_1$), and as $G$ is primitive, we get that $\Delta = \{1, \dots, n\}$. Therefore, $p = 1$.

Now, assume $D = (j+k) \wedge (k+l) \wedge n$. We will show that $D = 1$. 

We rename $(x_1 \dots x_j z_1 \dots z_k y_1 \dots y_l t_1 \dots t_m)$ as $(1,2,,\dots, n)$. Note that $D|n$.  Next, consider the set

$$ S_0 : =\{ D, 2D, 3D, \dots, mD\}$$ where $mD =  n$. Similarly, define 
$$ S_i = \{D+i, 2D+i, 3D+i, \dots, mD +(i \modd n)\}$$

Note that for $a \in S_i$, $a+ lD \in S_i$ for all $i$ and $l$.

We then have, $\sigma(S_i) = S_{i+1 \modd D}$. Moreovoer, we claim that $\tau (S_i) = S_i$. Indeed, for $i \in S_j$, 

$$ \tau(i) = \begin{cases} i & \text{for }  1\leq i \leq j \\
 i + k+l & \text{for } j < i \leq j+m \\
 i + k-m  & \text{for } j+m < i \leq j+m+l \\
 i - (l+m)  & \text{for } j + m + l < i \leq n\\
 \end{cases}$$ 
 As $k+l$, $l+m$, and $k-m$ are all divisible by $D$, we see that in each case $\tau(i) \in S_j$. 
 \begin{figure}[!ht!]
 \includegraphics[scale=0.3]{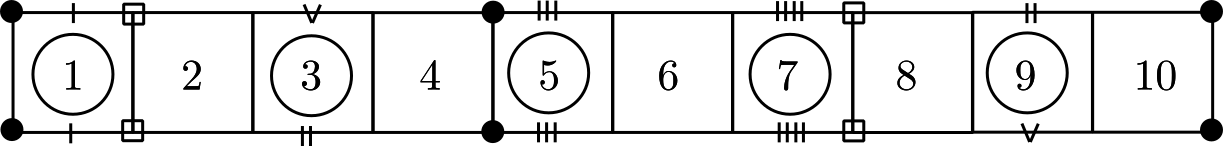}
 \caption{1-cylinder surface in $\calH(1,1)$ with $j=1, k=3, l=3, m=3$. The circled numbers indicate the set $S_1$ as defined in the proof. The identifications indicate the image of $S_1$ under $\tau$.}
 \end{figure}
 Hence, for $w \in G$, either $w(S_i) = S_i$ or $w(S_i) \cap S_i = \emptyset$ which implies that $S_i$ is a block for $G$ for all $i$. As $G$ is primitive, $S_i = \{1, \dots, n\}$ and $D = 1$.

$(\Leftarrow)$ 

Let $S$ be a one-cylinder square-tiled surface in $\calH(1,1)$ parametrized by $(1, j, k, l, m, 0)$. Assume that $(j+k) \wedge (k+l) \wedge n =1 = (j+k) \wedge (k+l) \wedge (l+m) \wedge (m+j)$

Note that $(j+k), (k+l), (l+m), (m+j)$ are lengths of horizontal simple closed curves curves based at singularities. Since the height of the cylinder is 1, there exists a vertical simple closed curve of length 1, based at a singularity. Hence, $\AbsPer(S)$ contains the vectors $(j+k, 0)$, $(l+m, 0)$, $(l+k, 0)$, $(m+j, 0)$ and $(0,1)$ as these are absolute periods of the curves just mentioned. Hence, since  $(1,0) \in \AbsPer(S)$. 
As $(1,0)$, $(0,1) \in \AbsPer(S)$, $\AbsPer(S) = \ZZ^2$. By Lemma \ref{lem:latticeprim}, $S$ is primitive. \end{proof}

The general strategy for cylinder diagrams $B$, $C$ and $D$ is the same. To prove that the number theoretic conditions are necessary for primitivity, we first assume that they are not fulfilled for a given surface $S$. Then, using Lemma \ref{lem:lattice1}we conclude that the $\AbsPer(S)$ is a sublattice of $\ZZ^2$. We next label $S$ and consider the set of labels of squares whose bottom left corner is in $\AbsPer(S)$. We show that this special set of labels forms a block for $\Mon(S)$, proving that $\Mon(S)$ is non-primitive. As $S$ is connected, this then implies, using Proposition \ref{prop:primsurfprimgroup} that $S$ is not primitive. 

To prove that the number theoretic conditions are sufficient, we simply use Lemma \ref{lem:latticeprim}.

We next present the above outline carried out in detail for surfaces with cylinder diagram $C$. 

\begin{lemma}[Primitivity in Cylinder Diagram C]\label{typeIIIprim}
A square-tiled surface in $\calH(1,1)$ with cylinder diagram C and parametrized by $(p, q, k, l, m, \alpha, \beta)$ is primitive if and only if 
$$ p \wedge q =1 \hspace{1cm} \text{ and } \hspace{1cm} (k +l) \wedge (l+ m) \wedge (p\beta - q \alpha) = 1$$
\end{lemma}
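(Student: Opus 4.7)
The plan is to follow the generic strategy sketched just before the lemma. For the $(\Leftarrow)$ direction (sufficiency), I will exhibit four closed saddle connections on $S$ whose holonomy vectors are $(k+l, 0)$, $(l+m, 0)$, $(\alpha, p)$, and $(\beta, q)$: the first two come from traversing the bottom boundary of each cylinder once, starting and ending at a cone point; the latter two come from a vertical trajectory in each cylinder that, because of the shear, returns to a cone point on the opposite boundary (possibly concatenated with a short boundary arc to reach the nearest singularity). These four vectors lie in $\AbsPer(S) \subseteq \ZZ^2$, and Lemma \ref{lem:lattice1} (applied with the $k, l$ there replaced by $k+l, l+m$) says they generate $\ZZ^2$ precisely under the hypotheses $p \wedge q = 1$ and $(k+l) \wedge (l+m) \wedge (p\beta - q\alpha) = 1$. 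In that case $\AbsPer(S) = \ZZ^2$, and Lemma \ref{lem:latticeprim} gives primitivity.

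For the $(\Rightarrow)$ direction I argue the contrapositive. Assume either $p \wedge q > 1$ or $(k+l) \wedge (l+m) \wedge (p\beta - q\alpha) > 1$. By Lemma \ref{lem:lattice1}, the lattice $L$ generated by those same four vectors is a proper sublattice of $\ZZ^2$. Moreover, every closed saddle connection of $S$ has holonomy in $L$: its homology class is a combination of horizontal cylinder core curves (contributing integer multiples of $(k+l,0)$ and $(l+m,0)$) and vertical transits across a cylinder (contributing $(\alpha, p)$ or $(\beta, q)$). Hence $\AbsPer(S) \subseteq L \subsetneq \ZZ^2$. Fix a labelling of the squares (by a choice of $\sigma$), and for each $i$ let $v_i \in \ZZ^2$ denote the position of the lower-left corner of square $i$ in the universal cover. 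Define
\[
\Delta \;=\; \{\, i \in \{1,\dots,n\} : v_i - v_1 \in L \,\}.
\]
Because $\sigma$ shifts the lower-left corner by $(1,0)$ and $\tau$ shifts it by $(0,1)$, any $w \in \Mon(S) = \langle \sigma, \tau \rangle$ shifts $v_i$ by a vector depending only on $w$; consequently $w$ either preserves the residue class of $v_i$ modulo $L$ or sends it to a disjoint one, so $\Delta$ is a block for $\Mon(S)$. Since $L \subsetneq \ZZ^2$ and the $n$ lower-left corners realize every residue class modulo $\ZZ^2$ (the covering $\pi\colon S \to \TT$ has degree $n$), the block $\Delta$ is nontrivial. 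Proposition \ref{prop:primsurfprimgroup} then contradicts primitivity of $S$, as required.

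The main technical obstacle I anticipate is the explicit identification, in the $(\Leftarrow)$ step, of the four closed saddle connections with the prescribed holonomies — in particular verifying that a vertical trajectory through the cylinder of height $p$ (resp.\ $q$) and shear $\alpha$ (resp.\ $\beta$), suitably concatenated with a boundary arc, returns to a cone point and hence is a genuine saddle connection rather than a merely closed curve. A secondary subtlety in the $(\Rightarrow)$ step is that several squares may share a lower-left corner at a cone point (Proposition \ref{prop:lowerleft}); but since $v_i$ is read as an integer vector, all such squares automatically enter or leave $\Delta$ together, and the block structure is unaffected.
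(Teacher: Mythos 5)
Your proposal is correct and follows essentially the same route as the paper's proof: sufficiency via Lemma~\ref{lem:lattice1} and Lemma~\ref{lem:latticeprim} applied to the absolute periods $(k+l,0)$, $(l+m,0)$, $(\alpha,p)$, $(\beta,q)$, and necessity by the contrapositive, building a block for $\Mon(S)$ out of the squares whose lower-left corners lie in the proper sublattice $L$ and invoking Proposition~\ref{prop:primsurfprimgroup}. The only difference is packaging: the paper works in the unfolded representation and records the wrap-around corrections (the $\epsilon_i$ multiples of the generators of $L$) explicitly, whereas you phrase the same fact as well-definedness of developed positions modulo $L$ after noting $\AbsPer(S)\subseteq L$.
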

\begin{proof}

$(\Rightarrow)$ Let $S = S(p,q,k,l,m,\alpha, \beta)$ be a surface of diagram C in $\calH(2)$.  Assume $p \wedge q = 1 $ and $(k +l) \wedge (l+ m) \wedge (p\beta - q \alpha) = 1$ are not satisfied. By Lemma \ref{lem:lattice1} this means that $(\alpha, p), (\beta, q), (k+l, 0),(l+m,0)$ generate some lattice $L$ that does not coincide with $\ZZ^2$. We will then show that $G = \ideal{\sigma, \tau}$ is not primitive where $\sigma$ and $\tau$ are the right and top permutations associated to $S$.

\begin{figure}[htbp]
   \begin{center}
   \begin{tikzpicture}[scale=.4, >=latex,shorten >=0pt,shorten <=0pt,line width=1pt,bend angle=20, 
   		circ/.style={circle,inner sep=2pt,draw=black!40,rounded corners, text centered},
   		tblue/.style={dashed,blue,rounded corners}]
	
      	\draw[help lines] (-15,-8) grid (15,6);
      	\node[left] at (-8.8,-6.3){\scriptsize $(0,0)$};
      
	\begin{scope}
      	\clip (-10,-6) -- ++(2,6) -- ++(-4,0) -- ++(4,4) -- ++(12,0) -- ++(-4,-4) -- ++(8,0) -- ++(-2,-6) --++ (-16,0);
	\foreach \x in {-8,...,8}
		\foreach \y in {-8,...,8}
			\fill[green!60!white] (\x*4 + \y*2, \y*2) rectangle +(1,1);
	\end{scope}

      	\clip (-15.5,-8.5) rectangle (15.5,7.5);
	\foreach \x in {-8,...,8}
		\foreach \y in {-4,...,8}
      			\fill[blue] (\x*4 + \y*2,\y*2) circle (5pt);

      	\path[->] 	(-10,-6) 	edge node[below,yshift=1pt]{\scriptsize $\overrightarrow{(k+l,0)}$} +(16,0)
      					edge node[above,xshift=-12pt,yshift=-6pt]{\scriptsize $\overrightarrow{(\alpha,p)}$} +(2,6)
			(-12,0) 	edge node[above,xshift=-10pt]{\scriptsize $\overrightarrow{(\beta,q)}$} +(4,4)
			(-8,4) 	edge node[above,xshift=0pt,yshift=-1pt]{\scriptsize $\overrightarrow{(m+l,0)}$} +(12,0);
			
	\draw (4,4) -- ++(-4,-4) -- ++(8,0) -- ++(-2,-6);
	\draw (-12,0) -- ++(4,0);
   \end{tikzpicture}
   \captionwidth=14cm 
   \caption{The marked points belong to the lattice $L$ generated by the vectors with coordinates $(\alpha,p)$, $(\beta,q)$, $(k+l,0)$ and $(m+l,0)$. The colored squares of the surface form a nontrivial block for the group $G$.}
   \label{fig:twocylinderlattice}
   \end{center} 
\end{figure}
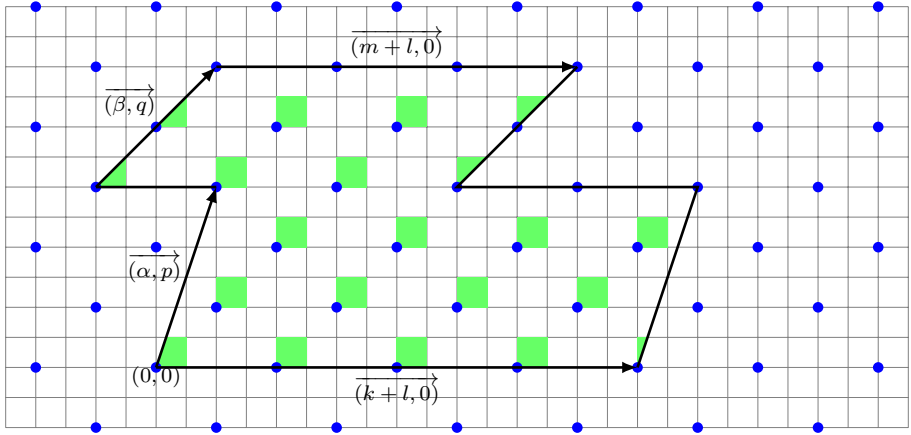

We take an unfolded representation of $S$, and we color the squares of the square-tiled surface which have their lower left vertex contained in the lattice $L$, and will show that the set of labels of the colored squares is a block for $G = \ideal{\sigma, \tau}$. We call this set $\Delta$.  

Consider an arbitrary square of the surface with lower left vertex $(s, t)$. Under $\sigma$ and $\tau$ this vertex is sent to:
\begin{gather}
(s+1, t) - \epsilon_1(l+m,0)-\epsilon_2(k+l, 0)\\
(s, t+1) - \epsilon_3(\alpha+\beta, p+q) - \epsilon_4(\alpha, p) - \epsilon_5(\beta, q)
\end{gather}
with $\epsilon_i \in \{0,1\}$

Now, we consider the action of an element $ g \in  G = \ideal{\sigma, \tau}$ on the lower left vertices of the colored squares. Given an element of $G$ note that up to a linear combination of $\epsilon_1(l+m,0),\epsilon_2(k+l, 0),\epsilon_3(\alpha+\beta, p+q),\epsilon_4(\alpha , p) , \epsilon_5(\beta, q)$, all the points in the lattice are translated by the same vector $v$. But, $\epsilon_1(l+m,0),\epsilon_2(k+l, 0),\epsilon_3(\alpha+\beta, p+q),\epsilon_4(\alpha , p) , \epsilon_5(\beta, q) \in L$. 

Hence, either $g(\Delta) = \Delta$ or $g(\Delta )\cap \Delta = \emptyset$. This implies $\Delta$ is a block for $G$.\\
$(\Leftarrow)$ Assume now that 
$$ p \wedge q =1 \hspace{1cm} \text{ and } \hspace{1cm} (k +l) \wedge (l+ m) \wedge (p\beta - q \alpha) = 1$$
By Lemma \ref{lem:lattice1} this means that  $\ideal{(\alpha, p), (\beta, q), (k+l, 0),(l+m,0)} = \ZZ^2$. But $(\alpha, p)$, $(\beta, q)$, $(k+l, 0),(l+m,0) \in \AbsPer(s).$ Hence, $\AbsPer(S) =  \ZZ^2$. By Lemma \ref{lem:latticeprim} we then get that $S$ is primitive. \end{proof}

Next, we provide a proof for the alternate parametrization of primitive $n$-square surfaces with cylinder diagram $A$. There exists a bijection between the set of parameters that encodes the lengths of the horizontal saddle connections and heights and shears of the cylinders with the parametrization that encodes the labels of the squares with cone points in their bottom left corners. 
\begin{proof}[Proof of Proposition \ref{prop:typeIreparam}] The goal is to show that the set of primitive $n$-square surfaces of $\calH(1,1)$ with cylinder diagram A is parametrized uniquely by the set
$$ \Omega := \{ (x, y, z, t) \in \NN^4| 1 \leq x < y < z < t \leq n, (z - x) \wedge (t-y) \wedge n = 1\}$$
Note that Propositions \ref{prop:typeIparam} and \ref{lem:allprimitivity}, give a unique parametrization of primitive $n$-square surfaces in $\calH(1,1)$ with cylinder diagram A. Let $\Sigma_A$ be as in Proposition \ref{prop:typeIparam} and let $\Sigma_A^P$ be the subset of $\Sigma_A$ that satisfies primitivity conditions imposed by Lemma \ref{lem:allprimitivity}. Then,  
We will define a function $f: \Sigma_A^P \rightarrow \Omega$, and prove that this function is bijective. Geometrically, $f$ is defined as follows: Given $(1,j, k, l, m, \alpha) \in \Sigma_A^P$, we draw the associated surface $S$ as in Figure \ref{fig:typeAalternateparam}.
\begin{figure}[h!!]
\includegraphics[scale=0.3]{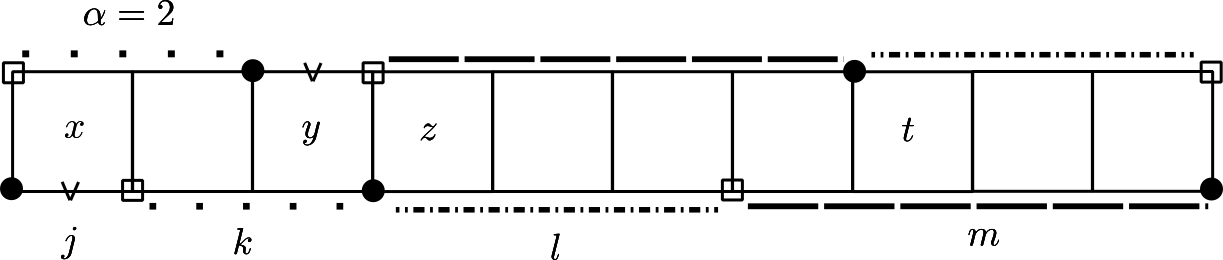}
\caption{A surface with cylinder diagram A given by parameters (1, 1, 2, 3, 4, 2). Note that the shear $(\alpha) = 2$ so that $x = 1$,$y=3$, $z = 4$ and $t = 8$.}
\label{fig:typeAalternateparam}
\end{figure}

We label the squares 1 through $n$ from left to right, and then set $1 \leq x < y < z < t \leq n$ to be the label of the squares with top corners as singularities.
Formally, given $(1, j, k, l, m, \alpha) \in \Sigma_A^P$, $f(1, j, k, l, m, \alpha) = (x,y,z,t)$ where $x < y < z < t \in \{\alpha+1, \alpha+1+j, \alpha+1+j+m, \alpha+1+j+m+l\}$ (all taken$\mod n$) and depending on the value of $\alpha$, the ordering on the set varies. However, for all possible values of $\alpha$, once checks that we will have $(z-x) \wedge (t-y) \wedge n= 1$ so that $f$ is well defined.

We will now describe a map $g$ from $\Omega$ to $\Sigma_A^P$. Given $(x,y,z,t) \in \Omega$, let $I:= \{y-x, z-y, t-z, n-t+x\}$ with a cyclic ordering by $(y-x, z-y, t-z, n-t+x)$. Let $J := \{ j, k, l, m\}$ be with a cyclic ordering $(j, m, l, k)$ too. Then, $I$ can have 1, 2 or 3 smallest elements. If $I$ has a unique smallest element, we set $j$ to be that element, and $m, l, k$ to be the rest of $I$ as per the cylic ordering on $I$ and $J$. In each of the cases, once one element of $J$ is set to be an element of $I$, the rest are determined by the cyclic ordering. If $I$ has 2 consecutive smallest elements $(a, b)$, then set $(k, j) = (a, b)$. If $I$ has 2 non-consecutive smallest elements, then $I$ also has a unique largest element, otherwise the GCD condition in $\Omega$ will not be satisfied. Set $m$ to be the largest element. If $I$ has 3 smallest elements, then $I$ has a unique largest element. Again set $m$ to be this element. We now pick $\alpha$. When $y-x = j, m, l, k$, set and solve $\alpha +1 = x, x-j, x-j-m, y$ respectively to get $\alpha$ in each case. 

One then checks that the map just described is an inverse to $f$. For instance, assume $(x, y, z, t) \in \Omega$, and assume we are in the case that $I$ has two smallest non-consecutive elements $(y-x, t-z)$ and that $z-y$ is the largest. Then, $g(x, y, z, t) = (1,y-x,n-t+x ,t-z, z-y, \alpha)$ with $\alpha = x-1$. Then 
$\alpha+ 1 < \alpha+1+y-x <  \alpha + 1+ y-x+ z-y < \alpha+1+y-x+z-y+t-z$ so that
\begin{align*}
f(g(x, y, z,t)) &= f(1,y-x,n-t+x ,t-z, z-y, x-1)\\ &= (\alpha + 1, \alpha+1+y-x, \alpha + 1+ y-x+ z-y, \alpha+1+y-x+z-y+t-z) = (x, y, z, t)
\end{align*}
Hence, $f$ is a bijection.
\end{proof}

\end{document}